\documentclass{article}

\usepackage[english]{babel}

\usepackage[utf8]{inputenc}

\usepackage[scale=.7]{geometry}
         
\usepackage{graphicx}
\usepackage{amsmath,amssymb,amsthm}
\usepackage{hyperref}
\usepackage{enumitem}  
\usepackage[nottoc]{tocbibind}
\usepackage{ifthen} 
\usepackage{iflang}
\usepackage{blindtext}
\usepackage{float}
\usepackage{xcolor}
\usepackage{comment}
\usepackage{bbm}
\usepackage{tikz-cd}
\usepackage{caption}
\usepackage{subcaption}

\def\thesistitle{Spectral truncations of $\SU(2)$ and the spectral localizer}
\def\thesisauthorfirst{Matt}
\def\thesisauthorsecond{Verhoeven}
\def\thesisdate{\today}

\title{\thesistitle}
\author{\thesisauthorfirst\space\thesisauthorsecond}
\date{\thesisdate}

\numberwithin{equation}{section} 
\newtheorem{theorem}[subsection]{Theorem}
\newtheorem{corollary}[subsection]{Corollary}
\newtheorem{lemma}[subsection]{Lemma}
\newtheorem{proposition}[subsection]{Proposition}
\newtheorem{conjecture}[subsection]{Conjecture}
\theoremstyle{definition}
\newtheorem{definition}[subsection]{Definition}
\newtheorem{remark}[subsection]{Remark}

\newcommand{\SU}[0]{\mathrm{SU}}

\newcommand{\C}{\mathbb{C}}
\newcommand{\R}{\mathbb{R}}
\newcommand{\N}{\mathbb{N}}
\newcommand{\Z}{\mathbb{Z}}
\newcommand{\prop}[0]{\mathrm{prop}}

\newcommand{\trunc}[2][]{
	\ifthenelse{\equal{#1}{}}{[{#2}]}{[{#2}]_{#1}}
}
\newcommand{\spintrunc}[2][]{
	\ifthenelse{\equal{#1}{}}{[\![{#2}]\!]}{[\![{#2}]\!]_{#1}}
}
\newcommand{\tensor}{\otimes}
\newcommand{\II}{\mathcal{I}}
\DeclareMathOperator{\End}{End}

\newcommand{\J}{\II(J)}
\DeclareMathOperator{\spann}{span}
\DeclareMathOperator{\sign}{sign}
\newcommand{\twoJ}{\II(2J)}
\DeclareMathOperator{\Hom}{Hom}
\newcommand{\jrel}[2]{|{#1}| \preceq {#2}}
\newcommand{\jrelplus}[2]{|{#1}|_{+} \preceq {#2}}
\newcommand{\jrelminus}[2]{|{#1}|_{-} \preceq {#2}}

\DeclareMathOperator{\ind}{ind}
\DeclareMathOperator{\Sig}{Sig}

\DeclareMathOperator{\Tr}{Tr}

\DeclareMathOperator{\Spin}{Spin}

\begin{document}
\maketitle

\begin{abstract}
We study operator systems arising from Peter-Weyl and Dirac spectral truncations of $\SU(2)$ in the sense of Connes-Van Suijlekom. We determine operator system invariants for these systems, including the $C^*$-envelope and uniform bounds on the propagation number. For Peter-Weyl truncations, we find $\prop \leq 2$ for all connected compact Lie groups, generalizing known results for the torus. We also carry out an analysis of the spectral localizer by Loring and Schulz-Baldes on $\SU(2)$. In this analysis, we find numerical evidence that an index map for operator systems previously proposed by Van Suijlekom only captures the right information in a suitable interval of the parameter $\kappa$, instead of the limit $\kappa \downarrow 0$. 
\end{abstract}

\section{Introduction}
Within noncommutative geometry, Connes and Van Suijlekom introduced the framework of spectral truncations as a method to approximate noncommutative spaces using a finite amount of spectral information \cite{ConnesvS2020}, which builds on earlier work by \cite{Dandrea2014}. The role played by $C^*$-algebras and spectral triples $(\mathcal A, \mathcal H, D)$ is taken over by operator systems and truncated operator system spectral triples $(P\mathcal A P, P \mathcal H, PDP)$ where $P$ is some spectral projection of $D$. Here $P\mathcal A P$ does not come with a multiplication operator anymore, so it is not a $C^*$-algebra. Nevertheless, it is a linear subspace of $B(P\mathcal H)$ closed under taking adjoints, and hence it is an operator system. In the same paper, the propagation number was introduced as an invariant for operator systems, which is used for measuring how far an operator system is removed from being a $C^*$-algebra.\\
\\
Building on this work, several advances have been made in recent years. The convergence of state spaces of (Dirac) spectral truncations to their original spectral triples has been determined for the circle $S^1$ in \cite{vanSuijlekom2021, Hekkelman2021}, for more general tori $\mathbb{T}^d$ in \cite{Leimbach24} and in a recent preprint for noncommutative tori in \cite{Bhatt2026}. Furthermore, another preprint proves convergence of spectral truncations for compact quantum groups \cite{Peng2026}. Similar results have been obtained for other types of truncations. In \cite{GaudillotEstrada2025,Leimbach2025} it was proven that for compact quantum groups the state spaces of Peter-Weyl truncated operator systems converge to the state space of the quantum group in an appropriate sense. For Fourier truncations, it was proven in \cite{Rieffel2023} that the state spaces converge for compact quantum groups and finitely generated groups. \\
\\
On the other hand, work has been done on the operator system structure of these truncations. The propagation number has been determined for spectral truncations of the circle $S^1$ and its dual operator system in \cite{ConnesvS2020,Farenick2021}, spectral truncations of the torus $\mathbb{T}^d$ in \cite{Leimbach24}, truncations of the finite cyclic group and the operator system of circulant matrices in \cite{Koot2021}, operator systems associated to tolerance relations \cite{Connes2022} and for operator systems associated to graphs in \cite{Gielen2023}. In particular, the propagation number of the truncated torus is $2$ irrespective of the dimension $d$, and this is the only manifold so far for which the propagation number of its spectral truncations has been determined. This result is extended to noncommutative tori in a recent preprint \cite{Bhatt2026}.\\
\\
In parallel to these studies, the spectral localizer has been developed by Loring and Schulz-Baldes in a series of papers \cite{Loring2017, Loring2019, Loring2020} and by others (see Chapter 10 the book \cite{SchulzBaldes2023} for an overview). This method allows for the computation of $K$-theoretic invariants via the index pairing by using only a finite amount of spectral information.
For a given tuning parameter $\kappa$ and truncation parameter $\rho$, one can define the spectral localizer matrix $L_{\kappa, \rho}$. Loring and Schulz-Baldes proved that the index pairing is equal to $\frac{1}{2}\Sig(L_{\kappa, \rho})$ for $\kappa$ small enough and $\rho$ large enough.
Although originally motivated by use in condensed matter theory \cite{Prodan2016}, the spectral localizer has been applied in the context of spectral truncations \cite{Hekkelman2021,VanSuijlekom2024}, and alternative proofs of correctness of the spectral localizer have been found based on $KK$-theory and $E$-theory \cite{vdDungen2026, Li2026}. In applications, it has been observed that the actual range of $\kappa$ and $\rho$ which gives the index is more lenient than the range proven in \cite{SchulzBaldes2023}, and this range is provably improved for the even localizer in a preprint \cite{CerjanSB2026}. So far, operator system spectral triples for which this behaviour of the spectral localizer has been determined are the circle $S^1$ \cite{Hekkelman2021} and the 2-torus $\mathbb{T}^2$ \cite{VanSuijlekom2024b}. \\
\\
In this paper we add to this set of examples by considering the Lie group $\SU(2)$ and the associated spectral triple $(C^{\infty}(\SU(2)), L^2(S), D)$. Unlike the circle and torus, this is a noncommutative Lie group, which complicates calculations. For both the Peter-Weyl and the spectral truncation we determine the dimensions of truncated operator systems as a function of the truncation parameter. Furthermore, we determine the $C^*$-envelopes and provide uniform bounds on the propagation numbers of these systems. For finite-dimensional Peter-Weyl truncations $R_{\Lambda}$ of connected compact Lie groups $G$ we find that $\prop(R_{\Lambda}) \leq 2$, generalizing the result for tori from \cite{Leimbach24}. The proof also implies more generally that the propagation number of a truncation based on the Laplace operator cannot exceed 2 for nice enough manifolds, such as spheres $S^n$. Furthermore, we find a propagation number of $4$ for (Dirac) spectral truncations with a small truncation parameter, which makes $\SU(2)$ the first known manifold for which this propagation number is unequal to 2. Additionally, we consider the spectral localizer on $\SU(2)$, which we apply to unitary irreducible representations $\pi_j$ to investigate the ranges of $\kappa$ and $\rho$ at which the index is captured. 
Our results show a concrete operator system in which the index is recovered for a certain interval of $\kappa$ and $\rho$, but not in the limit as proposed in \cite{VanSuijlekom2024} Proposition 24.

\subsection*{Acknowledgements}
The author would like to thank Walter van Suijlekom for his support during the writing of the authors Master thesis, from which this work developed, and for the support received during the preparation of this work.

\subsection*{AI statement}
In parts of the research the LLMs ChatGPT and Google's search engine AI have been used as search engines to
explore the literature. The proof of Theorem \ref{thm:RJpropnr} was improved by Claude Opus 5.5, based on a previous result that $\prop(R_J) \leq 3$. This improvement, based on analyticity of matrix coefficients, could be generalized to Lie groups other than $\SU(2)$, and this was done by the author. The report itself, including all mathematical content, is written without generative AI. All mathematical arguments have been checked without the use of AI, and are the responsibility of the author.

\section{Preliminaries}
\subsection{Operator system invariants}
Let us start by recalling some concepts related to the theory of operator systems, while referring to \cite{Paulsen2003} for further details. \\
\\
An abstract unital operator system $(E,e)$ is a complex $*$-vector space with a matrix ordering, and an Archimedean order unit $e$. A linear map $\phi \colon E \to F$ between operator systems $(E,e)$ and $(F,f)$ is called unital if $\phi(e) = f$ and unital completely positive (ucp) if additionally for each $n \in \N$ it holds that the map $\phi^{(n)} \colon M_n(E) \to M_n(F)$ given by $\phi^{(n)}([x_{ij}]_{ij}) = [\phi(x_{ij})]_{ij}$ is positive. A complete order isomorphism is a ucp map $\phi \colon E \to F$ such that $\phi^{-1}$ is also ucp, and a complete order embedding is a map $\phi \colon E \to F$ such that $\phi \colon E \to \phi(E)$ is a complete order isomorphism. \\
Every closed linear subspace $S$ of a unital $C^*$-algebra $A$ such that $1 \in S$ and $S = S^*$ is an operator system with the inherited matrix ordering. An operator system of this type is called a concrete unital operator system, and it turns out that every unital operator system is of this form by the Choi-Effros theorem \cite{Choi1977}. As such, it is interesting to ask for the `smallest' $C^*$-algebra containing an operator system $E$, which is captured in the notion of $C^*$-envelope. 
For a given operator system $E$, the $C^*$-envelope $C^*_{env}(E)$ together with a complete order embedding $i_E \colon E \to C^*_{env}(E)$ is the unique $C^*$-algebra up to isomorphism such that $C^*(i_E(E)) = C^*_{env}(E)$ and for any other pair consisting of a $C^*$-algebra $A$ and complete order embedding $\lambda \colon E \to A$ such that $C^*(\lambda(E)) = A$, there is a surjective unital $*$-homomorphism $\pi \colon A \to C^*_{env}(E)$ making the diagram
\[\begin{tikzcd}
	E & A \\
	& {C^*_{env}(E)}
	\arrow["\lambda", hook, from=1-1, to=1-2]
	\arrow["i_E"', hook', from=1-1, to=2-2]
	\arrow["{\exists \pi}", two heads, from=1-2, to=2-2]
\end{tikzcd}\]
commute, and this $C^*$-envelope always exists \cite{Hamana1979}. \\
\\
The propagation number was introduced in \cite{ConnesvS2020} by Connes and Van Suijlekom as an invariant for operator systems, used for measuring how far an operator system is removed from being a $C^*$-algebra. Let $i_E(E)^{\circ n} \subseteq C^*_{env}(E)$ denote the norm-closure of the linear span of $n$-fold products of elements in $E$ for $n \in \N$. The propagation number $\prop(E)$ is defined to be the smallest $n$ such that $i_E(E)^{\circ n}$ is a $C^*$-algebra if such an $n$ exists, and $\prop(E) = \infty$ otherwise. The propagation number is invariant under stable equivalence of operator systems \cite{ConnesvS2020}, and hence invariant under a suitable notion of operator system Morita equivalence as well \cite{Koot2021, Gielen2023, Eleftherakis2026}.

\subsection{Representation theory of $\SU(2)$}
The representation theory of $\SU(2)$ is the main tool with which we carry out calculations. As such, we briefly review the relevant results, which are standard and can be found in many books on representation theory of Lie groups \cite{Hall2015,Duistermaat2000}. The unitary irreducible representations are labeled by half-integers $j \in \frac{1}{2}\N$, and the associated representations $(\pi_j, V_j)$ are $(2j+1)$-dimensional. The weight vectors $e_m$ of $V_j$ are labeled by $m \in \{-j, -j+1, \ldots, j\}$, which we will denote by $\jrel{m}{j}$ in the sequel, and these vectors form an orthonormal basis of $V_j$. We say that two half-integers have the same parity if their fractional parts are equal. By the Peter-Weyl theorem, we know that the normalized matrix coefficients
\begin{equation*}
	f^j_{m,m'}(g) := \sqrt{2j+1} \langle \pi_j(g) e_m, e_{m'} \rangle
\end{equation*}
with $j \in \frac{1}{2}\N$ and $\jrel{m,m'}{j}$ form an orthonormal basis of $L^2(\SU(2))$ under the Haar measure. These functions are furthermore dense in $C(\SU(2))$ in the supremum-norm.\\
\\
The product of two matrix coefficients is related to the tensor product of representations. For $\SU(2)$ specifically, the tensor product of irreducible representations decomposes via
\begin{equation*}
	V_{j_1} \tensor V_{j_2} \cong \bigoplus_{j_3 = |j_1 - j_2|}^{j_1 + j_2} V_{j_3}
\end{equation*}
and this isomorphism is realized by the Clebsch-Gordan intertwiner $C^{j_1, j_2}$. The coefficients
\begin{equation*}
	C^{j_1, j_2, j_3}_{m_1, m_2, m_3} := \langle C^{j_1, j_2} e_{m_1} \tensor e_{m_2}, e_{m_3} \rangle
\end{equation*}
are called the Clebsch-Gordan Coefficients (CGCs). All CGCs are real-valued when considering an appropriate phase convention \cite{Koornwinder1981,Varshalovich1988}. \begin{proposition}\label{prop:CGcond}
	The following conditions are necessary for the CGC $C^{j_1,j_2,j_3}_{m_1,m_2,m_3}$ to be nonvanishing:
	\begin{itemize}
	\item (Eligible weights) $\jrel{m_i}{j_i}$ for $i=1,2, 3$.
	\item (Triangle condition) $j_3 \in \{|j_1 - j_2|, |j_1 - j_2| + 1, \ldots, j_1 + j_2\}$.
	\item (Addition of weights) $m_1 + m_2 = m_3$.  
	\end{itemize}
\end{proposition}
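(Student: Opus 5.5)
The plan is to read off all three conditions from the definition $C^{j_1,j_2,j_3}_{m_1,m_2,m_3} = \langle C^{j_1,j_2} e_{m_1}\tensor e_{m_2}, e_{m_3}\rangle$. We use the intertwining property of $C^{j_1,j_2}\colon V_{j_1}\tensor V_{j_2}\to \bigoplus_{j_3} V_{j_3}$ and the decomposition $V_{j_1}\tensor V_{j_2}\cong\bigoplus_{j_3=|j_1-j_2|}^{j_1+j_2}V_{j_3}$ recalled above. Here $e_{m_3}$ is understood as the weight vector in the summand $V_{j_3}$.

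\emph{Eligible weights.} This condition is essentially definitional. The vectors $e_{m_i}\in V_{j_i}$ exist only for $m_i\in\{-j_i,\dots,j_i\}$. If we adopt the standard convention that the symbol is zero otherwise, then a nonvanishing coefficient forces $\jrel{m_i}{j_i}$ for $i=1,2,3$.

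\emph{Triangle condition.} The target of $C^{j_1,j_2}$ contains a copy of $V_{j_3}$ only for $j_3$ in the range $|j_1-j_2|,\dots,j_1+j_2$ (in integer steps). For any other $j_3$, the pairing with a vector of $V_{j_3}$ is not defined, i.e.\ it is zero by convention. One can argue this more intrinsically with Schur's lemma. Let $p_{j_3}$ be the isotypic projection onto $V_{j_3}$. Then $p_{j_3}\circ C^{j_1,j_2}$ is an intertwiner from $V_{j_1}\tensor V_{j_2}$ to $V_{j_3}$. This intertwiner vanishes unless $V_{j_3}$ occurs in the decomposition, and by the Clebsch--Gordan rule it occurs exactly for the stated $j_3$. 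The only input is that decomposition, which I take as given from the preliminaries.

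\emph{Addition of weights.} I use the maximal torus $T=\{t_\theta=\mathrm{diag}(e^{i\theta},e^{-i\theta})\}$. By definition of weight vectors, $\pi_j(t_\theta)e_m=e^{2im\theta}e_m$ (up to the chosen normalization of the weight). Hence $e_{m_1}\tensor e_{m_2}$ is a weight vector for $\pi_{j_1}\tensor\pi_{j_2}$ with eigenvalue $e^{2i(m_1+m_2)\theta}$. Because $C^{j_1,j_2}$ intertwines and $\pi_{j_3}(t_\theta)$ is unitary,
\[
e^{2i(m_1+m_2)\theta}\,C^{j_1,j_2,j_3}_{m_1,m_2,m_3}
=\langle C^{j_1,j_2}(\pi_{j_1}\tensor\pi_{j_2})(t_\theta)e_{m_1}\tensor e_{m_2},e_{m_3}\rangle
=\langle C^{j_1,j_2}e_{m_1}\tensor e_{m_2},\pi_{j_3}(t_\theta)^*e_{m_3}\rangle
=e^{2im_3\theta}\,C^{j_1,j_2,j_3}_{m_1,m_2,m_3}.
\]
The last step uses that $\pi_{j_3}(t_\theta)^*e_{m_3}=e^{-2im_3\theta}e_{m_3}$ and that conjugate-linearity in the second slot of the inner product turns $e^{-2im_3\theta}$ into $e^{2im_3\theta}$. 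If $m_1+m_2\neq m_3$, choose $\theta$ with $e^{2i(m_1+m_2-m_3)\theta}\neq 1$; this forces the coefficient to vanish.

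No step is a genuine obstacle. The only care needed is in fixing conventions: what $e_{m_3}$ means when $j_3$ is outside the admissible range, and the weight normalization. These conventions make the first two conditions essentially definitional consequences of the Clebsch--Gordan decomposition, while the torus-equivariance computation gives the third.
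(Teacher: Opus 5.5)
Your argument is correct. It is a genuinely different route only in that the paper gives no proof at all: it states the proposition as a standard fact of $\SU(2)$ representation theory, deferring to the cited references (Hall, Duistermaat--Kolk, Koornwinder, Varshalovich et al.). There such selection rules are obtained either by an equivariance argument like yours or read off from explicit closed formulas for the CGCs.

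Your version is self-contained and largely convention-free:
\begin{itemize}
\item You point out that eligibility and the triangle condition are built into the paper's own definition of $C^{j_1,j_2,j_3}_{m_1,m_2,m_3}$ through the decomposition of $V_{j_1}\tensor V_{j_2}$. A small wording point here: the intrinsic version is cleaner stated as $\Hom_G(V_{j_1}\tensor V_{j_2},V_{j_3})=0$ for $j_3$ outside the admissible set, rather than via an isotypic projection onto a summand that is absent.
\item You get the addition of weights from torus equivariance. This needs only that $C^{j_1,j_2}$ intertwines and that $e_m$ has torus weight depending linearly on $m$.
\item Your handling of the adjoint on the block-diagonal target and of conjugate-linearity in the second slot is right. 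Since $2(m_1+m_2-m_3)$ is a nonzero integer whenever $m_1+m_2\neq m_3$, a suitable $\theta$ exists.
\end{itemize}

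The citation route buys a direct link to the explicit formulas the paper uses later, namely the closed form for $C^{j_1,j_2,j_1+j_2}$ and the symmetry relation. It is, however, tied to a choice of phase convention. Your argument does not depend on any such convention, and it makes transparent why the three conditions hold.
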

With this, we can calculate the product of matrix coefficients to be
\begin{equation}\label{eq:matcoeffprod}
	f^{j_1}_{m_1, m_1'}f^{j_2}_{m_2, m_2'} = \sum_{j=|j_1 - j_2|}^{j_1 + j_2} \frac{\sqrt{2j_1 + 1}\sqrt{2j_2 + 1}}{\sqrt{2j + 1}} C^{j_1, j_2, j}_{m_1, m_2, m} C^{j_1, j_2, j}_{m_1', m_2', m'} f^{j}_{mm'}
\end{equation}
where $m = m_1 + m_2$ and $m' = m_1' + m_2'$. We view this equation as describing the multiplicative action of $f^{j_1}_{m_1, m_1'} \in C(\SU(2))$ on a basis function $f^{j_2}_{m_2, m_2'} \in L^2(\SU(2))$ in terms of basis functions $f^{j}_{m,m'} \in L^2(\SU(2))$.

\section{Peter-Weyl truncation}
In the remainder of this paper, we define $G := \SU(2)$ for notational convenience, unless specified otherwise. \\
\\
In this section we consider the \textbf{Peter-Weyl truncation} of $C(G)$, which is based purely on representation-theoretic aspects of $G$. For this we define a projection $Q_J$ at a cutoff level $J$.
\begin{definition}
	For $J \in \frac{1}{2}\N$, the \textbf{Peter-Weyl projection} at level $J$, $Q_J \colon L^2(G) \to L^2(G)$, is the orthogonal projection onto the subspace
	\begin{equation*}
		\bigoplus_{\substack{j=0 \\ j \in \frac{1}{2}\N}}^J V_j \tensor V_j^* \subset  \overline{\bigoplus_{\substack{j=0 \\ j \in \frac{1}{2}\N}}^\infty V_j \tensor V_j^*} \cong L^2(G). 
	\end{equation*} 
\end{definition}
Consider $C(G) \subseteq B(L^2(G))$ where a function is identified with its multiplicative action. Truncating this gives an operator system $R_J := Q_J C(G) Q_J$ represented on the Hilbert space $Q_JL^2(G)$.
To facilitate the study of these spaces, we define the following set.
\begin{definition}
	Let $J \in \frac{1}{2}\N$. Then we define $\J$ to be the set of half-integer triples $jmm'$ with $0 \leq j \leq J$, $\jrel{m}{j}$ and $\jrel{m'}{j}$.
\end{definition}
With this notation, it follows that $\{f^{j}_{m,m'}\}_{jmm' \in \II(J)}$ is an orthonormal basis of $Q_J$, and
\begin{equation}\label{eq:dimQJ}
	\dim Q_J L^2(G) = \dim \bigoplus_{\substack{j=0 \\ j \in \frac{1}{2}\N}}^J V_j \tensor V_j^* = \sum_{\substack{j=0 \\ j \in \frac{1}{2}\N}}^J (2j+1)^2 = \frac{1}{6}(2J+1)(2J + 2)(4J + 3)
\end{equation}
where the sum can be evaluated using a standard induction argument. As such, with $N = \dim Q_J L^2(G)$ one can view elements of $R_J$ as $N \times N$ matrices, where the rows and columns are indexed by $jmm' \in \II(J)$. To make this more explicit, we denote the truncation of $f \in C(G)$ by $\trunc[J]{f} := Q_Jf Q_J \in R_J$. In the rest of this section we assume $J$ is fixed, and simply write $\trunc{f}$. For components of this matrix, we use the notation $[f]^{j_1,m_1,m_1'}_{j_2,m_2,m_2'}$ for the entry on column $j_1m_1m_1' \in \II(J)$ and row $j_2 m_2 m_2' \in \II(J)$. That is,
\begin{equation*}
	[f]^{j_1,m_1,m_1'}_{j_2,m_2,m_2'} = \langle f f^{j_1}_{m_1, m_1'}, f^{j_2}_{m_2, m_2'} \rangle
\end{equation*}
where the inner product is taken in $Q_J L^2(G)$, or equivalently in $L^2(G)$.
For a normalized matrix coefficient $f^j_{m,m'}$ we will also use the notation $\trunc{jmm'} := \trunc{f^{j}_{m,m'}}$ for readability.
Writing out Equation \eqref{eq:matcoeffprod} in this notation gives
\begin{equation}\label{eq:matrixentriesRJ}
	[jmm']^{j_1,m_1,m_1'}_{j_2,m_2,m_2'} = \frac{\sqrt{2j + 1}\sqrt{2j_1 + 1}}{\sqrt{2j_2 + 1}} C^{j, j_1, j_2}_{m, m_1, m_2} C^{j, j_1, j_2}_{m', m_1', m_2'}
\end{equation}
for the coefficients of the matrix $\trunc{jmm'}$, which we will often use in computations. Multiplication of two matrices $A, B \in R_J$ takes the form
\begin{equation*}
	(AB)^{j_1 m_1 m_1'}_{j_2 m_2 m_2'} = \sum_{jmm' \in \J} A^{jmm'}_{j_2 m_2 m_2'} B^{j_1 m_1 m_1'}_{jmm'}
\end{equation*}
with this notation.

\setcounter{subsection}{0}
\subsection{Dimension of $R_J$}
\setcounter{subsection}{2}
Equipped with this notation, we will determine some operator system invariants of $R_J$, starting with the dimension.
\begin{theorem}\label{thm:RJdim}
	The matrices $\{\trunc{jmm'}\}_{\twoJ}$ form a basis of $R_J$.
	Thus,
    \begin{equation*}
        \dim R_J = \sum_{\substack{j=0\\j \in \frac{1}{2}\N}}^{2J} (2j+1)^2 = \frac{1}{6}(4J + 1)(4J+2)(8J + 3).
    \end{equation*}
\end{theorem}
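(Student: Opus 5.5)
We need to show two things: that the matrices $\trunc{jmm'}$ with $jmm' \in \twoJ$ span $R_J$, and that they are linearly independent.

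\emph{Spanning.} Since $C(G)$ is the closure of the span of the matrix coefficients and $f \mapsto \trunc{f}$ is continuous (into a finite-dimensional space), $R_J$ is spanned by the $\trunc{jmm'}$ for all $j \in \frac12\N$. If $j > 2J$, then every entry of $\trunc{jmm'}$ vanishes by Equation \eqref{eq:matrixentriesRJ}. Indeed, the Clebsch--Gordan coefficient $C^{j,j_1,j_2}_{m,m_1,m_2}$ requires the triangle condition $j_2 \geq |j - j_1| \geq j - J > J$ from Proposition \ref{prop:CGcond}, which is impossible for $j_2 \leq J$. So only $j \leq 2J$ contributes.

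\emph{Linear independence.} Suppose $\sum_{jmm' \in \twoJ} c_{jmm'} \trunc{jmm'} = 0$. Write $f = \sum c_{jmm'} f^j_{m,m'}$. I want to test $Q_J f Q_J$ against well-chosen vectors so that each coefficient is isolated. The cleanest way is to evaluate the matrix entry
\[
\langle f\, f^{J}_{m_1,m_1'}, f^{J}_{m_2,m_2'}\rangle = \int f \cdot f^J_{m_1,m_1'} \overline{f^J_{m_2,m_2'}}.
\]
The function $f^J_{m_1,m_1'}\overline{f^J_{m_2,m_2'}}$ lies in the span of matrix coefficients of $V_J \otimes V_J^* \cong \bigoplus_{k=0}^{2J} V_k$. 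So these entries compute the $L^2$ pairings of $f$ with the space $W$ spanned by all such products. The key claim is that $W$ equals the full space $\bigoplus_{k\le 2J} V_k \otimes V_k^*$. Given this, vanishing of all entries forces $f = 0$, and orthonormality of the $f^j_{m,m'}$ then gives $c = 0$.

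To prove the claim, note that $W$ is invariant under left and right translation by $G$, since the products of coefficients of $\pi_J$ with coefficients of $\overline{\pi_J}$ form a bi-invariant space. By Peter--Weyl, $W$ is therefore a direct sum of full isotypic blocks $V_k \otimes V_k^*$. It remains to see that each $k \le 2J$ actually occurs. Using Equation \eqref{eq:matcoeffprod} (with $\overline{f^J_{m,m'}}$ expressed as $\pm f^J_{-m,-m'}$), the coefficient of $f^k_{m,m'}$ in a product is a product of two CGCs, $C^{J,J,k}_{m_1,m_2,m}C^{J,J,k}_{m_1',m_2',m'}$. For each $k \le 2J$ we can choose $m_1, m_2$ with $C^{J,J,k}_{m_1,m_2,m_1+m_2}\neq 0$, because $C^{J,J}$ is an isomorphism onto a space containing $V_k$, so some CGC to $V_k$ is nonzero. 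This gives the claim.

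The main obstacle is making this last step clean, specifically the bookkeeping of the conjugation signs and the verification that nonvanishing CGCs exist for every $k$. The invariance argument avoids computing explicit CGC values.

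The dimension formula then follows from $|\twoJ| = \sum_{j\le 2J}(2j+1)^2$ and Equation \eqref{eq:dimQJ} with $J$ replaced by $2J$.
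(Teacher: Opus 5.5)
Your spanning argument and the dimension count are fine. The linear independence step has a genuine gap: testing only the block $j_1=j_2=J$ cannot detect the non-integer-spin part of $f$. By the triangle condition, $V_J\otimes V_J^*\cong\bigoplus_{k=0}^{2J}V_k$ runs over \emph{integer} $k$ only. So your space $W$ is the sum of the coefficient spaces of $\pi_0,\pi_1,\ldots,\pi_{2J}$, not of all $\pi_k$ with $k\in\frac12\N$, $k\le 2J$.

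For $J\ge\frac12$ the index set $\II(2J)$ always contains non-integer $j$, for instance $j=\frac12$, and every such $f^j_{m,m'}$ is orthogonal to $W$. Concretely, $\langle f^{1/2}_{m,m'}f^J_{m_1,m_1'},f^J_{m_2,m_2'}\rangle=0$ for all weights, since $V_{1/2}\otimes V_J\cong V_{J-1/2}\oplus V_{J+1/2}$ does not contain $V_J$. Thus ``vanishing of all these entries forces $f=0$'' is false as stated. Your CGC step also breaks here: for non-integer $k$, every $C^{J,J,k}_{m_1,m_2,m_1+m_2}$ vanishes, so no choice of weights works.

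The repair stays within your framework. For $J\ge\frac12$, also use the entries with $j_1=J$, $j_2=J-\frac12$. The products $\overline{f^J_{m_1,m_1'}}\,f^{J-1/2}_{m_2,m_2'}$ span the matrix coefficients of $\overline{\pi_J}\otimes\pi_{J-1/2}\cong\bigoplus_{k=1/2}^{2J-1/2}\pi_k$. These are all the non-integer $k\le 2J-\frac12$, which, since $2J\in\Z$, are exactly the non-integer $j$ occurring in $\II(2J)$. Alternatively, let $j_1,j_2$ range over all of $[0,J]$.

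You can also drop the bi-invariance and nonvanishing-CGC argument. The span of all matrix coefficients of a finite-dimensional unitary representation equals the sum of the coefficient spaces of its irreducible constituents, which identifies $W$ directly without sign bookkeeping.

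With this fix, your Peter--Weyl duality argument is a valid and more conceptual alternative to the paper's proof. The paper instead isolates coefficients one at a time using explicit coordinates: first $000$, then $srr'$ and $J(l+r)(l'+r')$ with $s=L+\frac12-J$. Here $s$ itself may be a half-integer, which is precisely how the paper reaches both parities. It then runs an induction on $j$ using nonvanishing of extremal-weight CGCs.
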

Notice that for $j > 2J$ it holds that $[jmm'] = 0$.
Indeed, for such a $j$ the CGCs $C^{j, j_1, j_2}_{m, m_1, m_2}$ vanish for all $j_1 m_1 m_1', j_2 m_2 m_2' \in \J$ since the triangle inequality in Proposition \ref{prop:CGcond} is not satisfied: As $j-j_1 > J$ and $j_2 \leq J$ it holds that $j_2 \notin \{j-j_1, j - j_1 + 1\ldots, j + j_1 \}$. Thus, the truncation $\trunc{jmm'}$ certainly vanishes, which means $R_J$ is contained in the span of $\{[jmm']\}_{jmm' \in \II(2J)}$.\\
\\
The remaining difficulty in this proof consists then of showing that all $\trunc{jmm'}$ are nonzero and linearly independent. Although all $f^j_{mm'}$ are orthonormal in $L^2(G)$, and thus certainly linearly independent, it is a priori possible that the projection $Q_J$ creates dependencies when $f^{j}_{m,m'}$ is considered as an operator on $L^2(G)$. In order to prove that this does not happen, we first introduce a lemma about certain CGCs not vanishing.
\begin{lemma}\label{lem:CGNV}
	Let $C^{j_1, j_2, J}_{m_1, m_2, J}$ denote the CGC with $j_i \in \frac{1}{2}\N$ and $\jrel{m_i}{j_i}$ for $i=1,2$, and $J \in \frac{1}{2}\N$. Assume that the conditions in Proposition \ref{prop:CGcond} are satisfied. Then $C^{j_1, j_2, J}_{m_1, m_2, J} \neq 0$.
\end{lemma}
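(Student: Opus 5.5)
We need to show that the CGC $C^{j_1,j_2,J}_{m_1,m_2,J}$ is nonzero whenever the conditions of Proposition \ref{prop:CGcond} hold, i.e. $m_1+m_2=J$, $\jrel{m_i}{j_i}$ and $|j_1-j_2|\le J\le j_1+j_2$. The plan is to realize the highest weight vector $e_J$ of the copy of $V_J$ inside $V_{j_1}\tensor V_{j_2}$ explicitly, and show that it has a nonzero component on every admissible $e_{m_1}\tensor e_{m_2}$. Since $C^{j_1,j_2}$ is unitary, $C^{j_1,j_2,J}_{m_1,m_2,J}=\langle e_{m_1}\tensor e_{m_2}, (C^{j_1,j_2})^{*} e_J\rangle$ up to conjugation. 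So it suffices to compute the vector $v:=(C^{j_1,j_2})^*e_J$ and check its coefficients.

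\textbf{Step 1: characterize $v$.} The vector $v$ is characterized, up to a nonzero scalar, as the unique vector of weight $J$ in $V_{j_1}\tensor V_{j_2}$ annihilated by the raising operator $E\tensor 1+1\tensor E$. The weight-$J$ space has one dimension per copy of $V_{j_3}$ with $j_3\ge J$. A weight-$J$ vector is killed by $E$ exactly when it is a highest weight vector, and the copy of $V_J$ occurs exactly once. Write
\begin{equation*}
	v=\sum_{m_1+m_2=J} a_{m_1}\, e_{m_1}\tensor e_{m_2},
\end{equation*}
where $m_1$ ranges over the interval $I=\{m_1 : |m_1|\le j_1,\ |J-m_1|\le j_2\}$.

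\textbf{Step 2: the recursion.} Use the standard action $E e_m=\sqrt{(j-m)(j+m+1)}\,e_{m+1}$. The condition $(E\tensor1+1\tensor E)v=0$, read off at the coefficient of $e_{m_1+1}\tensor e_{m_2}$, becomes
\begin{equation*}
	a_{m_1}\sqrt{(j_1-m_1)(j_1+m_1+1)}+a_{m_1+1}\sqrt{(j_2-m_2+1)(j_2+m_2)}=0,
\end{equation*}
with $m_2=J-m_1$ and $m_1,m_1+1\in I$. The key check is that both square-root factors are strictly positive whenever $m_1$ and $m_1+1$ both lie in $I$.
\begin{itemize}
	\item From $m_1+1\le j_1$ we get $j_1-m_1>0$, and $m_1\ge -j_1$ gives $j_1+m_1+1>0$.
	\item From $m_2\le j_2$ we get $j_2-m_2+1>0$, and from $m_2-1\ge -j_2$ we get $j_2+m_2>0$.
\end{itemize}
So consecutive coefficients are related by a nonzero factor. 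Hence either all $a_{m_1}$ on the interval $I$ vanish, or none of them does.

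\textbf{Step 3: conclude.} Since $v\ne 0$ (the copy of $V_J$ exists by the triangle condition), no $a_{m_1}$ vanishes. The CGCs are exactly the $a_{m_1}$ after normalization, so $C^{j_1,j_2,J}_{m_1,m_2,J}\ne0$ for every admissible $(m_1,m_2)$.

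The main obstacle is the boundary bookkeeping in Step 2: confirming that $I$ is a set of consecutive values and that the recursion never passes through a zero factor inside it. As a cross-check, one could instead invoke the known closed formula
\begin{equation*}
	C^{j_1,j_2,J}_{m_1,m_2,J}\propto(-1)^{j_1-m_1}\sqrt{\frac{(j_1+m_1)!\,(j_2+m_2)!}{(j_1-m_1)!\,(j_2-m_2)!}},
\end{equation*}
whose prefactor is nonzero under the triangle condition, so it is visibly nonvanishing.
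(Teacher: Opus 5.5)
Your proof is correct, but it takes a different route from the paper. The paper gives no argument of its own. It cites the closed formula for the coefficients with $m_3=j_3$ (Equation 35 in Section 8.5.2 of Varshalovich et al.), which is the formula you list as a cross-check, and nonvanishing is read off from it.

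You instead derive nonvanishing from first principles. The vector $(C^{j_1,j_2})^*e_J$ is a nonzero weight-$J$ vector annihilated by $E\otimes 1+1\otimes E$, and this condition is a two-term recurrence linking consecutive coefficients. Your boundary checks correctly show that the factors in the recurrence are strictly positive whenever $m_1$ and $m_1+1$ both lie in $I$. The point you flagged but did not close is also fine: $I$ is a run of consecutive values, since the triangle condition gives $J-j_2\equiv j_1 \pmod 1$, and $I$ is nonempty because it contains the given $m_1$.

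Your argument is self-contained and does not depend on the phase convention. It uses only two facts: the raising operator has nonzero matrix elements below the top weight, and the Clebsch--Gordan map is a unitary intertwiner. Solving your recurrence is in fact how the cited closed formula is obtained, and it also produces the alternating sign $(-1)^{j_1-m_1}$. The citation is shorter and supplies the exact normalized value, which this lemma does not need.
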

\begin{proof}
    Follow by Equation 35 in Section 8.5.2 of \cite{Varshalovich1988}.
\end{proof}

A stronger result follows as a consequence of classical symmetry relations between CGCs, such as described in \cite{Koornwinder1981,Varshalovich1988, Bohm2001}.

\begin{lemma}\label{lem:CGnonvanish}
	Let $C^{j_1,j_2,j_3}_{m_1,m_2,m_3}$ denote the CGC with $j_i \in \frac{1}{2}\N$ and $\jrel{m_i}{j_i}$ for $i=1,2,3$. Assume that the conditions in Proposition \ref{prop:CGcond} are satisfied, and additionally $|m_i| = j_i$ for at least one $i \in \{1,2,3\}$. Then $C^{j_1, j_2, j_3}_{m_1, m_2, m_3} \neq 0$.
\end{lemma}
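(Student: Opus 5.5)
The plan is to reduce every case to Lemma \ref{lem:CGNV}, which handles the situation $m_3 = j_3$, by using the classical symmetry relations of Clebsch--Gordan coefficients. These relations permute the roles of the three angular momenta and flip signs of weights, at the cost of a nonzero scalar factor. The relations I intend to use, in the real phase convention of \cite{Varshalovich1988}, are the following.
\begin{align*}
C^{j_1,j_2,j_3}_{m_1,m_2,m_3} &= (-1)^{j_1+j_2-j_3} C^{j_2,j_1,j_3}_{m_2,m_1,m_3},\\
C^{j_1,j_2,j_3}_{m_1,m_2,m_3} &= (-1)^{j_1+j_2-j_3} C^{j_1,j_2,j_3}_{-m_1,-m_2,-m_3},\\
C^{j_1,j_2,j_3}_{m_1,m_2,m_3} &= (-1)^{j_1-m_1}\sqrt{\tfrac{2j_3+1}{2j_2+1}}\, C^{j_1,j_3,j_2}_{m_1,-m_3,-m_2}.
\end{align*}
None of the prefactors vanishes, so nonvanishing is preserved along each relation. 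Each relation also preserves the three conditions of Proposition \ref{prop:CGcond}. The triangle condition is symmetric in $j_1,j_2,j_3$, weight addition is preserved by the stated sign changes, and $|m_i|\le j_i$ is clearly preserved.

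\textbf{Case $i=3$.} If $m_3 = j_3$, this is exactly Lemma \ref{lem:CGNV}. If $m_3 = -j_3$, apply the second relation to reduce to $m_3 = j_3$.

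\textbf{Case $i=2$.} Suppose $|m_2| = j_2$. Apply the third relation to obtain a coefficient of the form $C^{j_1,j_3,j_2}_{m_1,-m_3,-m_2}$. Its third weight $-m_2$ satisfies $|-m_2| = j_2$, so the previous case applies.

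\textbf{Case $i=1$.} Suppose $|m_1| = j_1$. First use the first relation to swap the first two slots. This reduces to the case $i=2$.

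The only real obstacle is bookkeeping: one has to make sure the symmetry relations are quoted correctly in the paper's phase convention, with real coefficients, and that the reduced coefficient again satisfies all hypotheses of Lemma \ref{lem:CGNV}. In particular, in that reduced coefficient the third momentum must equal its third weight. Since the precise phases are irrelevant, only that the prefactors are nonzero, I will just cite the relations from \cite{Varshalovich1988} (Section 8.4) or \cite{Koornwinder1981} and check the hypotheses.
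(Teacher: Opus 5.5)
Your proposal is correct and follows the paper's route. The paper simply asserts that the lemma follows from Lemma \ref{lem:CGNV} (the case $m_3=j_3$) together with the classical Clebsch--Gordan symmetry relations of \cite{Varshalovich1988}, Section 8.4, and your three relations, their nonzero prefactors, and the case reduction $i=1\to i=2\to i=3$ supply exactly that bookkeeping, with the hypotheses of Proposition \ref{prop:CGcond} (including parity and the triangle condition) preserved at each step.
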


We will explicitly use one such symmetry later, so we record it in a lemma.
\begin{proposition}[\cite{Varshalovich1988}, Equation 8.4.3]
	Let $j_i \in \frac{1}{2}\N$ and $\jrel{m_i}{j_i}$ for $i = 1,2,3$. Then
	\begin{equation}\label{eq:CGsym}
		C^{j_1, j_2, j_3}_{m_1, m_2, m_3} = (-1)^{j_2 + m_2} \sqrt{\frac{2j_3+1}{2j_1 + 1}} C^{j_2, j_3, j_1}_{-m_2, m_3, m_1}.
	\end{equation}
\end{proposition}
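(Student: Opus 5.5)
The plan is to derive \eqref{eq:CGsym} from Schur's lemma, by identifying both sides as components of intertwiners lying in the same one-dimensional Hom-space. The proportionality constant then follows from orthogonality of the Clebsch-Gordan intertwiners, and the sign from the phase convention. Throughout I will use the weight conditions of Proposition \ref{prop:CGcond}: both sides vanish unless $m_1+m_2=m_3$ and the triangle condition holds, and these conditions are symmetric under the relabelling. So it suffices to treat the case where they are satisfied.

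\textbf{Step 1 (duality).} Fix the standard antiunitary-compatible isomorphism $\epsilon\colon V_{j_2}\to V_{j_2}^*$. It sends $e_{m}$ to $(-1)^{j_2+m}e_{-m}^*$, up to an overall convention-dependent sign, and it is an intertwiner because $\SU(2)$ representations are self-dual. Using it, I define a linear map $T\colon V_{j_1}\to V_{j_2}\tensor V_{j_3}$ by
\[
\langle T e_{m_1}, e_{-m_2}\tensor e_{m_3}\rangle := (-1)^{j_2+m_2}\, C^{j_1,j_2,j_3}_{m_1,m_2,m_3}.
\]
Conceptually, $T$ is the image of $C^{j_1,j_2}$, restricted to the $V_{j_3}$-component, under the chain of isomorphisms
\[
\Hom(V_{j_1}\tensor V_{j_2},V_{j_3})\cong\Hom(V_{j_1},V_{j_3}\tensor V_{j_2}^*)\cong \Hom(V_{j_1},V_{j_2}\tensor V_{j_3}).
\]
Since $C^{j_1,j_2}$ is an intertwiner, so is $T$. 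I would verify this concretely by checking that $T$ commutes with the raising and lowering operators $J_\pm$, using the matrix elements $\sqrt{(j\mp m)(j\pm m+1)}$ together with the recursion relations that $J_\pm$-equivariance imposes on the CGCs.

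\textbf{Step 2 (Schur).} By the triangle condition, $V_{j_1}$ occurs in $V_{j_2}\tensor V_{j_3}$ with multiplicity one. Hence $T=\lambda\,(C^{j_2,j_3})^*|_{V_{j_1}}$ for some scalar $\lambda$, which gives
\[
(-1)^{j_2+m_2}C^{j_1,j_2,j_3}_{m_1,m_2,m_3}=\lambda\, C^{j_2,j_3,j_1}_{-m_2,m_3,m_1}.
\]

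\textbf{Step 3 (modulus of $\lambda$).} I sum squares over all $m_1,m_2,m_3$. The columns of a unitary CG matrix give $\sum_{m_1,m_2}|C^{j_1,j_2,j_3}_{m_1,m_2,m_3}|^2=1$ for each $m_3$, so the left side totals $2j_3+1$. The same argument shows the right side totals $|\lambda|^2(2j_1+1)$. Therefore $|\lambda|=\sqrt{(2j_3+1)/(2j_1+1)}$.

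\textbf{Step 4 (sign of $\lambda$), the main obstacle.} Since all CGCs are real, $\lambda$ is real, so only its sign remains, and this depends delicately on the phase convention of \cite{Varshalovich1988} (Condon--Shortley). I would first evaluate both sides at a configuration where the convention pins down the sign, namely $m_1=j_1$, $m_3=j_3$, $m_2=j_3-j_1$. Lemma \ref{lem:CGNV} and Lemma \ref{lem:CGnonvanish} guarantee that both sides are nonzero there, and the convention specifies $C^{j_1,j_2,j_3}_{j_1,j_3-j_1,j_3}>0$. For the right-hand side I would compute the sign of $C^{j_2,j_3,j_1}_{j_1-j_3,j_3,j_1}$ from the closed-form expression for stretched-type coefficients. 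If that bookkeeping becomes unwieldy, my fallback is to substitute Racah's explicit formula into both sides and compare the sums term by term after reindexing the summation variable.
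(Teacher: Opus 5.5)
The paper does not prove this identity; it quotes it from \cite{Varshalovich1988}, where it is one of the standard symmetries derived from the Wigner $3j$-symbols (ultimately from Racah's formula). Your argument is a correct, self-contained alternative.

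The map $e_m\mapsto(-1)^{j_2-m}e_{-m}^*$ does intertwine the Condon--Shortley bases of $V_{j_2}$ and $V_{j_2}^*$, which is a one-line check with $J_\pm$. So your $T$ is exactly the image of the $V_{j_3}$-component of $C^{j_1,j_2}$ under the two Hom-isomorphisms. Schur's lemma then reduces the identity to a single scalar $\lambda$. Unitarity of the Clebsch--Gordan intertwiners gives $|\lambda|^2=(2j_3+1)/(2j_1+1)$, and one evaluation fixes the sign.

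One correction in Step 4: $C^{j_2,j_3,j_1}_{j_1-j_3,j_3,j_1}$ is not stretched in the sense of Equation \eqref{eq:CGj1plusj2}, since that would require $j_1=j_2+j_3$. What you need instead is the sign pattern of coefficients with maximal total weight, the case behind Lemma \ref{lem:CGNV}. The cleanest route is to impose $J_+v=0$ on the highest weight vector $v=\sum_\alpha C^{a,b,c}_{\alpha,c-\alpha,c}\,e_\alpha\tensor e_{c-\alpha}$ of $V_c\subset V_a\tensor V_b$. This forces consecutive coefficients to be nonzero with opposite signs. With the Condon--Shortley normalization $C^{a,b,c}_{a,c-a,c}>0$, it follows that $C^{a,b,c}_{\alpha,c-\alpha,c}$ has sign $(-1)^{a-\alpha}$. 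Taking $(a,b,c,\alpha)=(j_2,j_3,j_1,j_1-j_3)$, the right-hand coefficient has sign $(-1)^{j_2+j_3-j_1}$. This is exactly $(-1)^{j_2+m_2}$ at $m_2=j_3-j_1$, so $\lambda>0$ and your Racah fallback is unnecessary.

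As for what each route buys: the citation is short and places the identity within the full table of symmetries. Your proof explains where the factor $\sqrt{(2j_3+1)/(2j_1+1)}$ comes from, namely a ratio of dimensions forced by orthogonality. It also makes explicit that the sign rests on the Condon--Shortley conventions, both for the CGCs and for the phases of the basis vectors $e_m$ that enter $\epsilon$. The paper only hints at this dependence with the phrase ``an appropriate phase convention''.
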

With this lemma we can prove Theorem \ref{thm:RJdim}.
\begin{proof}[Proof of Thm \ref{thm:RJdim}]
	We have already argued that $\trunc{jmm'} = 0$ for $j > 2J$ earlier, so it remains to show linear independence of $\trunc{jmm'}$ for $jmm' \in \II(2J)$.
    Suppose that there are $\lambda_{jmm'} \in \C$ for $jmm' \in \twoJ$ such that $\sum_{jmm' \in \twoJ} \lambda_{jmm'} \trunc{jmm'} = 0$. We prove that then necessarily $\lambda_{jmm'} = 0$ for all $jmm' \in \twoJ$. For two coordinates $j_1 m_1 m_1', j_2 m_2 m_2' \in \J$, the $(j_1 m_1 m_1', j_2 m_2 m_2')$'th coefficient gives
    \begin{equation*}
        \sum_{jmm' \in \II(2J)} \lambda_{jmm'} \trunc{jmm'}^{j_1 m_1 m_1'}_{j_2 m_2 m_2'}= \sqrt{\frac{2j_1 + 1}{2j_2 + 1}}\sum_{jmm' \in \twoJ} \sqrt{2j+1} \lambda_{jmm'} C^{j,j_1,j_2}_{m,m_1,m_2} C^{j,j_1,j_2}_{m',m_1',m_2'} = 0
    \end{equation*}
    using Equation \ref{eq:matrixentriesRJ}. Writing $\mu_{jmm'} := \sqrt{2j + 1} \lambda_{jmm'}$, which vanishes if and only if $\lambda_{jmm'}$ vanishes, gives
    \begin{equation}\label{eq:linindeppfcoords}
        \sum_{jmm' \in \twoJ} \mu_{jmm'} C^{j,j_1,j_2}_{m,m_1,m_2} C^{j,j_1,j_2}_{m',m_1', m_2'} = 0
    \end{equation}
    which holds for all $j_1 m_1 m_1', j_2 m_2 m_2' \in \II(J)$. We will make appropriate choices for these coordinates to deduce that all $\mu_{jmm'}$ vanish.\\
    \\
    First, consider $j_1 m_1 m_1' = 000$, and $j_2 m_2 m_2' \in \J$ arbitrary. Then Equation \ref{eq:linindeppfcoords} reads
    \begin{equation*}
        \sum_{jmm' \in \twoJ} \mu_{jmm'} C^{j,0,j_2}_{m,0,m_2}C^{j,0,j_2}_{m',0,m_2'} = 0.
    \end{equation*}
    By the triangle and weight condition the summands vanish if $j \neq j_2$, $m \neq m_2$ or if $m' \neq m_2'$. Thus, the equation reduces to $\mu_{j_2 m_2 m_2'} C^{j_2, 0, j_2}_{m_2, 0, m_2} C^{j_2,0,j_2}_{m_2,0,m_2} = 0$. By Lemma \ref{lem:CGnonvanish} these CGCs are nonzero (in fact, they are both equal to 1), hence we must necessarily have $\mu_{j_2 m_2 m_2'} = 0$. As we can do this for any $j_2 m_2 m_2' \in \J$ this yields that $\mu_{jmm'} = 0$ for all $jmm' \in \J$. \\
    \\
    For the remaining $\mu_{jmm'}$ with $j > J$, we proceed essentially by induction up to $2J$ in steps of one half. Suppose that $\mu_{jmm'} = 0$ for all $jmm' \in \II(L)$, with $L$ a half-integer satisfying $J \leq L < 2J$. Consider the triple $(L+\frac{1}{2}),l,l'$ for some $l,l'$ satisfying $\jrel{l,l'}{L+\frac{1}{2}}$. Then we show that $\mu_{(L+\frac{1}{2}) ll'} = 0$. \\
	\\
	Define $s = L + \frac{1}{2} -J$, $r = -\sign(l) s$, $r' = -\sign(l') s$ and take the two coordinates $j_1 m_1 m_1' = s r r'$ and $j_2 m_2 m_2' = J (l+r) (l'+r')$.
	These are indeed valid coordinates for $Q_JL^2(G)$: For the triple $srr'$, we know that $0 < s \leq J$ by the requirements on $L$, so that $srr' \in \J$. Furthermore, since $l+r = \sign(l)(|l| - L -\frac{1}{2} + J)$, we can see that $l+r$ and $J$ have the same parity (as $|l| - L - \frac{1}{2} \in \Z$), and furthermore $|l+r| \leq J$ since $|l| - L - \frac{1}{2} \leq 0$. An identical argument works for $l' + r'$, so indeed $J (l+r) (l' + r') \in \J$.\\
	\\
	Now consider Equation \eqref{eq:linindeppfcoords} with these $j_1 m_1 m_1', j_2 m_2 m_2'$. For the CGCs $C^{j, s, J}_{m, r, l+r}$ and $C^{j, s, J}_{m', r', l'+r'}$ to be nonvanishing for some $jmm' \in \twoJ$, it is necessary that $J \in \{|j-L-\frac{1}{2} +J|, |j-L - \frac{1}{2} + J| + 1, \ldots, j+L+\frac{1}{2} -J\}$ and $m = l$, $m' = l'$, by the triangle condition and magnetic moments. This implies that $j \leq L + \frac{1}{2}$ is necessary for this not to vanish, as otherwise $|j-L-\frac{1}{2}+J| > J$. Thus, Equation \eqref{eq:linindeppfcoords} reduces at least
	to
	\begin{equation*}
		\sum_{j = \sigma}^{L+\frac{1}{2}} \mu_{jll'} C^{j, s, J}_{l,r,(l+r)} C^{j,s,J}_{l',r',(l'+r')} = 0  
	\end{equation*}
	where the summation starts at $\sigma = L + \frac{1}{2} \text{ (mod }1)$, $\sigma \in \{0, \frac{1}{2}\}$, and takes integer steps. However, since we have assumed that $\mu_{jmm'}$ vanishes for all $j \leq L$, the sum reduces further to $\mu_{(L+\frac{1}{2})ll'} C^{L+\frac{1}{2}, s, J}_{l,r,(l+r)} C^{L+\frac{1}{2},s,J}_{l',r',(l'+r')} = 0$. Since $r,r' \in \{-s,s\}$, the CGCs in this equation do not vanish by Lemma \ref{lem:CGnonvanish}, from which we find that $\mu_{(L+\frac{1}{2})ll'} = 0$ and hence $\lambda_{(L+\frac{1}{2})ll'} = 0$. By induction we conclude that $\lambda_{jmm'} = 0$ for all $jmm' \in \twoJ$, which shows that all $[jmm']$ are linearly independent in $R_J$ and hence constitute a basis. The total dimension of $R_J$ can be calculated in the same way as $\dim Q_J L^2(G)$.
\end{proof} 
\setcounter{subsection}{1}
\subsection{$C^*$-envelope and propagation number}
\setcounter{subsection}{6}
In this section, we determine the invariants $C^*_{env}(R_J)$ and $\prop(R_J)$. In the case that $S \subseteq M_n(\C)$ is an operator system represented on a finite-dimensional Hilbert space, as is the case with $R_J$, the definition of $C^*$-envelope reduces to the following (\cite{Koot2021}, Prop. 1.37).
\begin{definition}
	Let $S \subseteq M_n(\C)$ be an operator system. Then the \textbf{$C^*$-envelope} of $S$, denoted $C^*_{env}(S)$, is the smallest $C^*$-subalgebra of $M_n(\C)$ such that $S \subseteq C^*_{env}(S)$.
\end{definition}
It should be noted that all finite-dimensional $C^*$-algebras are of the form $\bigoplus_{i = 1}^N M_{n_i}(\C)$, up to isomorphism. Thus, the $C^*$-envelope may not necessarily be $M_n(\C)$ itself, but it will always be isomorphic to some combination of $M_{n_i}(\C)$ blocks with $\sum_{i=1}^N n_i \leq n$. Furthermore, since $M_n(\C)$ is finite-dimensional, the propagation number will always be finite with an upper bound depending on $n$. \\
\\
The proof given in this section works for the Peter-Weyl truncation of more general Lie groups, namely compact connected Lie groups. An ingredient in this proof is \textbf{real-analyticity} of matrix coefficients. We briefly recall what it means for (functions on) a manifold to be real-analytic, using Sections 1.9 and 2.1 of the book by Lewis \cite{Lewis2023}.
\begin{definition}
    Let $U \subseteq \R^n$ be open. A function $f : U \to \R^m$ is called real-analytic if $f$ equals its convergent power series near every point. That is, for all $x_0 \in U$ there is a neighbourhood $V$ of $x_0$ such that
\begin{equation*}
    f(x) = \sum_{I \in \N^n} \frac{D^I f(x_0)}{I!}(x-x_0)^I
\end{equation*}
    on $V$ where $I = (\alpha_1, \ldots, \alpha_n)$ is some multi-index and $I! = \prod_{i=1}^n \alpha_i!$.
\end{definition}

\begin{definition}
    A real-analytic manifold $M$ is a smooth manifold such that all transition functions are real-analytic. A function $f : M \to N$ is real-analytic if it is real-analytic on charts.
\end{definition}
Real-analytic maps admit an \textbf{identity theorem}, which is perhaps more familiar from complex geometry. We will not state this in full generality, but only in the context as needed for our proof.
\begin{proposition}[\cite{Lewis2023}, Sec. 2.1.2]
    Let $M$ be a connected real-analytic manifold, and $f : M \to \R$ be a real-analytic function. If there is an open nonempty subset $U \subseteq M$ such that $f|_{U} = 0$, then $f = 0$ on all of $M$.
\end{proposition}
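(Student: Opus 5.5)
The plan is the standard open–closed argument. I would work with the set of points near which $f$ vanishes identically, show it is nonempty, open and closed, and then conclude by connectedness of $M$. Concretely, define
\begin{equation*}
W := \{ x \in M : f = 0 \text{ on some open neighbourhood of } x\}.
\end{equation*}
By hypothesis $U \subseteq W$, so $W \neq \emptyset$. Openness is immediate from the definition: if $f$ vanishes on an open neighbourhood $O$ of $x$, then every point of $O$ also lies in $W$.

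The substantive step is to show that $W$ is closed. Let $x \in \overline{W}$. I would choose a real-analytic chart $\varphi \colon O \to \R^n$ around $x$. Then $\tilde f := f \circ \varphi^{-1}$ is real-analytic on $\varphi(O)$, since $f$ is real-analytic on charts. By the definition of real-analyticity there is a neighbourhood $V$ of $x_0 := \varphi(x)$, which I may shrink to be connected, on which
\begin{equation*}
\tilde f(z) = \sum_{I} \frac{D^I \tilde f(x_0)}{I!}(z - x_0)^I .
\end{equation*}
Since $x \in \overline{W}$, there is a sequence $y_k \in W \cap \varphi^{-1}(V)$ with $y_k \to x$. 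At each $y_k$ the function $\tilde f$ vanishes on a neighbourhood of $\varphi(y_k)$, so every partial derivative satisfies $D^I\tilde f(\varphi(y_k)) = 0$.

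A real-analytic function is in particular smooth, so each $D^I \tilde f$ is continuous. Letting $k \to \infty$ gives $D^I \tilde f(x_0) = 0$ for every multi-index $I$. Hence the power series above vanishes identically, so $\tilde f = 0$ on $V$, and therefore $f = 0$ on the open neighbourhood $\varphi^{-1}(V)$ of $x$. This shows $x \in W$, so $W$ is closed.

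Since $M$ is connected and $W$ is nonempty, open and closed, $W = M$, and in particular $f = 0$ on all of $M$. The main point needing care is the closedness step. There one must use the power series expansion centred at the limit point $x$ itself, not at the nearby points $y_k$, together with continuity of all derivatives. This avoids any subtlety about radii of convergence or about comparing expansions across different charts.
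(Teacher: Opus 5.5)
Your proof is correct. The set $W$ is nonempty and open by construction. Your closedness step works: you expand $f\circ\varphi^{-1}$ at the limit point $\varphi(x)$ itself and use continuity of all partial derivatives to kill every Taylor coefficient there, and connectedness then finishes. The paper gives no proof of its own, only the citation to Lewis, and your open--closed argument is the standard proof of this identity theorem, so nothing further is needed.
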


For Lie groups we have the following.
\begin{proposition}[\cite{Duistermaat2000}, Sec. 1.6 + Cor 1.10.9]
    Every Lie group has a unique real-analytic structure such that the product and inverse maps are real-analytic. Furthermore, every Lie group homomorphism $\phi : G \to G'$ is real-analytic with respect to these structures.
\end{proposition}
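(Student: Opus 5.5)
The plan is to build the analytic structure from exponential coordinates and then show that any admissible structure must contain these charts. Let $\mathfrak g$ be the Lie algebra of the Lie group $G$ and $\exp \colon \mathfrak g \to G$ the exponential map. It restricts to a diffeomorphism from a small open ball $B \subseteq \mathfrak g$ around $0$ onto a neighbourhood $U$ of the identity. For each $g \in G$ I take the chart $\phi_g := \exp^{-1} \circ L_{g^{-1}}$ on $gU$, where $L_g$ denotes left translation.

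\textbf{Existence.} The key step is to show that the group law, read in exponential coordinates, is real-analytic near $(0,0)$. Concretely, I need that $\mu(X,Y) := \log(\exp X \exp Y)$ is real-analytic on a neighbourhood of $(0,0)$.
\begin{itemize}
\item I would first try the Baker--Campbell--Hausdorff series. It expresses $\mu$ as a series of iterated brackets, and it converges absolutely for $\|X\|+\|Y\|$ small, by the standard estimate on the Dynkin form.
\item An alternative avoids combinatorics. The curve $t \mapsto \mu(X,tY)$ solves an ODE whose right-hand side is built from $\frac{\mathrm{ad}}{1-e^{-\mathrm{ad}}}$, an analytic function of $\mathrm{ad}$. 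Analytic dependence of ODE solutions on initial data and parameters then gives analyticity of $\mu$.
\end{itemize}
I expect this local analyticity of $\mu$ to be the main obstacle, since everything else reduces to it.

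Given local analyticity of $\mu$, the charts fit together as follows.
\begin{itemize}
\item The transition map $\phi_h \circ \phi_g^{-1}$ is $X \mapsto \log(h^{-1}g\exp X)$. Near a point it equals $\mu(Z,X)$ with $\exp Z = h^{-1}g$ small, so it is analytic and the charts form an analytic atlas.
\item Multiplication near $(g\exp X, h\exp Y)$ can be rewritten as $gh \exp(\mathrm{Ad}_{h^{-1}}X)\exp Y$.
\item The map $\mathrm{Ad}_{h^{-1}}$ is linear, and $(X,Y) \mapsto \mu(\mathrm{Ad}_{h^{-1}}X, Y)$ is analytic, so multiplication is analytic in these charts.
\item Inversion is handled the same way: $(g\exp X)^{-1} = g^{-1}\exp(-\mathrm{Ad}_g X)$ is linear in the coordinates.
\end{itemize}

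\textbf{Homomorphisms.} Let $\phi \colon G \to G'$ be a Lie group homomorphism. It satisfies $\phi\circ\exp = \exp\circ\, d\phi_e$, and it intertwines left translations. Hence $\phi_{\phi(g)}\circ\phi\circ\phi_g^{-1} = d\phi_e$ is linear in the exponential charts, and in particular real-analytic.

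\textbf{Uniqueness.} Suppose $\mathcal A$ is any real-analytic structure making the group operations analytic. Each left-invariant vector field is then analytic with respect to $\mathcal A$. Its flow, and in particular $(X,t)\mapsto \exp(tX)$, is therefore analytic, by analytic dependence of ODE solutions on initial conditions and parameters. So $\exp$ is an analytic diffeomorphism near $0$ for $\mathcal A$, and the charts $\phi_g$ are analytic with respect to $\mathcal A$. Since both structures share a common atlas, they coincide.
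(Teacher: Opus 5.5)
Your proof is correct. The paper gives no argument of its own and only cites Duistermaat--Kolk, so the comparison is with the cited route.

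\textbf{Existence.} Your variant (b) is essentially the cited Sec.~1.6: the product is analytic in logarithmic coordinates because of the ODE with $\frac{\mathrm{ad}}{1-e^{-\mathrm{ad}}}$ and analytic dependence on initial data and parameters. Variant (a) should not be your primary route. Convergence of the BCH series does not by itself show that it equals $\log(\exp X\exp Y)$ in an abstract Lie group, and that identity is normally derived from (b). A routine point on the transition maps: $Z=\log(h^{-1}g)$ is only guaranteed to lie where $\mu$ is analytic after you shrink $B$ so that $\exp(B)\exp(B)$ sits inside $\exp$ of that region.

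\textbf{Uniqueness and homomorphisms.} Here you genuinely differ. The cited Cor.~1.10.9 belongs to the section on Lie subgroups and homomorphisms. The standard route there is to show that even \emph{continuous} homomorphisms between Lie groups are analytic, via closed-subgroup arguments. Uniqueness then follows by applying this to the identity map between two admissible structures. Your arguments are shorter and need only analytic ODE theory and the analytic inverse function theorem:
\begin{itemize}
\item $\phi$ is linear in exponential charts because $\phi\circ\exp=\exp\circ d\phi_e$;
\item $\exp$ is analytic for any admissible $\mathcal A$, because left-invariant fields are $\mathcal A$-analytic and so are their flows.
\end{itemize}
However, they presuppose that $\phi$ is differentiable and that $\mathcal A$ induces the given smooth structure. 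That suffices for the statement as written.

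\textbf{What the cited route buys.} It gives uniqueness from the topological group alone, and analyticity of merely continuous homomorphisms. The second is what the paper actually relies on. The irreducible unitary representations $\pi\colon G\to \mathrm{U}(V_\pi)$, whose matrix coefficients are claimed to be analytic, are a priori only continuous. With your version one must therefore add that continuous homomorphisms of Lie groups are smooth.
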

We will use the consequence that matrix coefficients are analytic. For $G = \SU(2)$ this is clear as matrix coefficients can be written as (Jacobi) polynomials \cite{Koornwinder1981}.\\
\\
To describe the Peter-Weyl truncation of general Lie groups, we briefly introduce some notation.
Let $G$ be a connected compact Lie group. We can decompose $L^2(G)$ with the Haar measure using the Peter-Weyl decomposition
\begin{equation*}
    L^2(G) \cong \bigoplus_{\pi \in \widehat G} V_\pi \tensor V_{\pi}^*
\end{equation*}
where $\widehat G$ denotes the set of unitary irreducible representations of $G$. Consider a finite-rank orthogonal projection $P_\Lambda$ with
\begin{equation*}
    P_\Lambda \bigoplus_{\pi \in \widehat G} V_\pi \tensor V_{\pi}^* = \bigoplus_{\pi \in \Lambda} V_\pi \tensor V_{\pi}^*
\end{equation*}
with $\Lambda\subset \widehat G$ a \textbf{finite} set of irreps. Define $R_\Lambda = P_\Lambda C(G)P_{\Lambda}$, and write $[f] := P_{\Lambda} f P_{\Lambda} \in R_\Lambda$ for the truncation of $f \in C(G)$.

\begin{theorem}\label{thm:RJpropnr}
    Let $G$ be a compact connected Lie group and $R_{\Lambda}$ defined as above. It holds that $C^*_{env}(R_{\Lambda}) = M_{N}(\C)$ where $N = \dim P_{\Lambda} L^2(G)$. Furthermore, $\mathrm{prop}(R_{\Lambda}) \leq 2$. 
\end{theorem}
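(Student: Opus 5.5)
Since everything lives in $M_N(\C)$, it suffices to show $\spann(R_\Lambda \cdot R_\Lambda) = M_N(\C)$. Then $R_\Lambda^{\circ 2}$ is already a $C^*$-algebra and $\prop(R_\Lambda)\le 2$. Moreover, $C^*(R_\Lambda)$ contains $R_\Lambda^{\circ 2} = M_N(\C)$, so $C^*_{env}(R_\Lambda) = M_N(\C)$ by the finite-dimensional definition above. I will show that every matrix unit $|\psi\rangle\langle\phi|$ (in some basis of $P_\Lambda L^2(G)$) lies in the span of products $[f][g]$. By duality it is enough to show that the only $T \in M_N(\C)$ with $\Tr(T\,[f][g]) = 0$ for all $f,g \in C(G)$ is $T = 0$.

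To set this up, write $[f][g] = P_\Lambda f P_\Lambda g P_\Lambda$ and fix an orthonormal basis $\{e_k\}$ of $P_\Lambda L^2(G)$ consisting of matrix coefficients, which are continuous and real-analytic by the cited structure theorem. Then $P_\Lambda = \sum_k |e_k\rangle\langle e_k|$, and the entries are
\[
\langle f P_\Lambda g\, e_a, e_b\rangle = \sum_k \int_G f\,\overline{e_b}\, e_k \,d\mu \int_G g\, e_a\, \overline{e_k}\, d\mu .
\]
Let $T$ have entries $t_{ab}$ and suppose $\sum_{a,b} t_{ab}\langle fP_\Lambda g e_a,e_b\rangle=0$ for all continuous $f,g$. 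As an identity in $f$ and $g$ separately, this says that the kernel
\[
K(x,y)=\sum_{a,b,k} t_{ab}\,\overline{e_b(x)}\,e_k(x)\, e_a(y)\,\overline{e_k(y)}
\]
vanishes on $G\times G$, since $f\otimes g$ spans a dense subspace of $C(G\times G)$. The task is then to deduce $t_{ab}=0$ from $K\equiv 0$.

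This deduction is the main obstacle, and it is where analyticity enters. Put $F(x,y)=\sum_k e_k(x)\overline{e_k(y)}$, the reproducing kernel of $P_\Lambda$. Then $K(x,y) = F(x,y)\, H(x,y)$ with $H(x,y)=\sum_{a,b} t_{ab}\overline{e_b(x)}e_a(y)$. On the diagonal, $F(x,x)=N>0$ (the constant value by left invariance), so $F$ is nonzero on an open neighbourhood $U$ of the diagonal in $G\times G$. Hence $H=0$ on $U$. Since $H$ is real-analytic on the connected analytic manifold $G\times G$ (here connectedness of $G$ is used), the identity theorem, applied to the real and imaginary parts, gives $H\equiv 0$. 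The functions $\overline{e_b}\otimes e_a$ are linearly independent in $L^2(G\times G)$, so all $t_{ab}=0$. This completes the plan. I expect the fiddly points to be justifying the density step that passes from pairings against $f,g$ to the kernel identity, and confirming $F(x,x)\neq 0$. The latter follows from the constant function $1$ lying in $P_\Lambda L^2(G)$ if the trivial representation is in $\Lambda$, or more generally from translation invariance of $\sum_k |e_k(x)|^2 = \sum_{\pi\in\Lambda}\dim(\pi)^2$.
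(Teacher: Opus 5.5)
Your proposal is correct and follows essentially the same route as the paper: you annihilate $R_\Lambda^{\circ 2}$ under the trace pairing and rewrite the pairing as $\int_{G\times G} f(x)g(y)F(x,y)H(x,y)\,dx\,dy$, with $F$ the reproducing kernel of $P_\Lambda$. You then use density of product functions to get $FH\equiv 0$, and $F\neq 0$ near the diagonal together with the identity theorem on the connected real-analytic manifold $G\times G$ forces $H\equiv 0$ and hence $T=0$. Your diagonal value $F(x,x)=\sum_{\pi\in\Lambda}(\dim V_\pi)^2=N$ is the correct one; the paper's $\sum_{\pi\in\Lambda}\dim V_\pi$ is a harmless slip, since only nonvanishing matters.
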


In the proof, we use some ideas similar to those in the paper \cite{Leimbach24}. In particular, we make use of an integral kernel $K_{\Lambda}$ encoding the spectral truncation, which was used in that paper to prove convergence of state spaces.

\begin{proof}
    We aim to prove that $R_{\Lambda}^{\circ 2} = M_N(\C)$. For this we use the Hilbert-Schmidt inner product on $M_N(\C)$ given by $\langle A, B \rangle_{HS} = \Tr(AB^*)$ to show that the orthogonal complement $(R_{\Lambda}^{\circ 2})^{\perp} \subseteq M_N(\C)$ equals zero. Then $R_{\Lambda}^{\circ 2} = (R_{\Lambda}^{\circ 2})^{\perp \perp} = M_N(\C)$ proving the claim. \\
    \\
    To assist in this, we introduce some notation.
    Define the normalized matrix coefficients $$f^\pi_{v,w}(g) = \sqrt{\dim V_{\pi}}\langle \pi(g) v, w \rangle$$
    for $\pi \in \widehat G$, $v,w \in V_\pi$. Consider an orthonormal basis $\{e^{\pi}_{k}\}_{k \in I_\pi}$ of $V_{\pi}$, and define the finite index set
    \begin{equation*}
        I = \{ (\pi, e^\pi_k, e^\pi_l) \mid \pi \in \Lambda; k,l \in I_{\pi} \}.
    \end{equation*}
    For $i \in I$ define the matrix coefficient $f_i := f^{\pi}_{v,w}$. In the following it will only be relevant that $\{f_i\}_{i \in I}$ is an orthonormal basis of $P_\Lambda L^2(G)$, so we suppress the representation-theoretic origin in our notation.
    Then by definition
    \begin{equation}
        (P_{\Lambda} g)(x) = \sum_{i \in I} \langle f_i, g \rangle_{L^2(G)} f_i(x) = \sum_{i \in I} \int_G f_i(x) f_i^*(y) g(y) \; dy
    \end{equation}
    for $g \in L^2(G)$, so by defining the integral kernel $K_{\Lambda}(x,y) = \sum_{i \in I} f_i(x) f_i^*(y)$ we may write $(P_{\Lambda} g)(x) = \int_G K_{\Lambda}(x,y)g(y) \; dy$. \\
    \\
    Consider now $g,h \in C(G)$ and $B \in M_N(\C)$ arbitrary, and assume that $\langle [g][h], B \rangle_{HS} = 0$. We aim to show that in fact $B = 0$. Since $\{f_i\}_{i \in I}$ forms a basis of $P_{\Lambda} L^2(G)$, we may write $B v = \sum_{i,j \in I} b_{ij} \langle v, f_i \rangle f_j $ so that $B^*v = \sum_{i,j \in I} b_{ji}^*\langle v, f_i \rangle f_j $ for $v \in P_JL^2(G)$. Extending $B^*$ with zeroes to act on all of $L^2(G)$ we may write
    \begin{equation}
        (B^* g)(x) = \sum_{i, j \in I} b^*_{ji} \int_G f_j(x) f_i^*(y) g(y) \; dy = \int_G K_B(x,y) g(y) \; dy
    \end{equation}
    for $g \in L^2(G)$, where $K_B(x,y) = \sum_{i,j \in I} b_{ji}^* f_j(x) f_i^*(y) $ similarly as for $P_{\Lambda}$. \\
    \\
    With this writing, we can simplify the trace as
    \begin{align*}
        \Tr([g][h]B^*) &= \Tr(P_{\Lambda} g P_{\Lambda} P_{\Lambda} h P_{\Lambda} B^*) \\
        & = \Tr(g P_{\Lambda}^2 h P_{\Lambda} B^* P_{\Lambda}) \\
        & = \Tr(g P_{\Lambda} h B^*)
    \end{align*}
    where we have used the cyclic property of the trace together with $P_{\Lambda} B^* P_{\Lambda} = B^*$ and $P_{\Lambda}^2 = P_{\Lambda}$. Here the functions $g$ and $h$ are identified with their multiplication operators. Evaluating this trace with the basis $\{f_i\}_{i \in I}$ gives
    \begin{align*}
        \Tr(g P_{\Lambda} h B^*) &= \sum_{i \in I} \int_G f_i^*(x) (gP_{\Lambda} h B^* f_i)(x) \; dx \\
        &= \sum_{i \in I} \int_G f^*_i(x) g(x) \left (\int_G K_\Lambda(x,y) (hB^* f_i)(y) dy \right ) dx \\
        &= \sum_{i \in I} \int_G f^*_i(x) g(x) \left (\int_G K_\Lambda(x,y) h(y) \sum_{j \in I} b_{ji}^* f_j(y) dy \right ) dx \\
        &= \int_{G \times G} g(x)h(y) K_{\Lambda}(x,y) \left (\sum_{i,j \in I} b_{ji}^*f_j(y)f^*_i(x) \right ) dx dy \\
        &= \int_{G \times G} g(x)h(y) K_{\Lambda}(x,y)K_B(x,y) \; dx dy
    \end{align*}
    where we have used the integral kernel formulation of $P_{\Lambda}$ and $B^*$, together with $B^* f_i = \sum_{j \in I} b_{ji}^* f_j$ and Fubini's theorem. By assumption, this trace vanishes so that
    \begin{equation}
        \int_{G \times G} g(x)h(y) K(x,y) \; dx dy = 0
    \end{equation}
    for all functions $g,h \in C(G)$, where $K(x,y) := K_\Lambda (x,y) K_B(x,y)$. By the Stone-Weierstrass theorem, the set of product functions $\{g(x)h(y) \mid g,h \in C(G)\}$ is dense in $C(G \times G)$, and hence in $L^2(G \times G)$. Since $K$ is continuous and in the orthogonal complement of a dense set, we find that $K(x,y) = 0$ everywhere.\\
    \\
    In fact, since the matrix coefficients $f_i$ are analytic, the functions $K_\Lambda$, $K_B$ and $K$ are all analytic.
    Since $K(x,y) = 0$, the function $K_B(x,y)$ vanishes on the open set $\{K_{\Lambda} \neq 0 \}$, which is nonempty as $K(e,e) = \sum_{i \in I} |f_i(e)|^2 = \sum_{\pi \in \Lambda} (\dim V_\pi) \neq 0$. A real-analytic function vanishing on a nonempty open set vanishes everywhere on the connected component. Hence $K_B(x,y) = 0$ for all $x,y \in G$, which implies that $B^*g = 0$ for all $g \in L^2(G)$, so that $B = 0$ as desired.
\end{proof}
Since $\SU(2)$ is connected, we can apply the theorem to operator systems $R_J$ and find the following.

\begin{corollary}
    Let $R_J$ be the Peter-Weyl truncations of $G = \SU(2)$. Then $C^*_{env}(R_J) = M_N(\C)$ where $N = \dim P_J L^2(G)$. For $J \geq \frac{1}{2}$ it holds that $\prop(R_J) = 2$, while for $J = 0$ we have $R_0 = \C = M_1(\C)$ so that $\prop(R_0) = 1$.
\end{corollary}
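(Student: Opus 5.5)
The plan is to deduce the corollary from Theorem \ref{thm:RJpropnr}, and then compute $\prop(R_J)$ exactly by checking when $R_J$ itself is already a $C^*$-algebra. Since $\SU(2)$ is a compact connected Lie group and $Q_J$ is the projection $P_\Lambda$ with $\Lambda = \{\pi_j : 0 \leq j \leq J\}$, Theorem \ref{thm:RJpropnr} gives directly that $C^*_{env}(R_J) = M_N(\C)$ with $N = \dim Q_J L^2(G)$ as in Equation \eqref{eq:dimQJ}. It also gives $\prop(R_J) \leq 2$. In this finite-dimensional setting the embedding $i_{R_J}$ is the inclusion $R_J \subseteq M_N(\C)$. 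Hence $\prop(R_J) = 1$ holds exactly when $R_J$ is itself a $C^*$-algebra, and it then equals $C^*_{env}(R_J) = M_N(\C)$.

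For $J = 0$ we have $N = 1$, and $R_0 = Q_0 C(G) Q_0$ acts on the one-dimensional space of constants. By Theorem \ref{thm:RJdim} it has dimension $1$ and contains the unit $[000] = 1$. So $R_0 = \C = M_1(\C)$ is a $C^*$-algebra, and $\prop(R_0) = 1$.

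For $J \geq \frac12$ I compare dimensions. If $R_J$ were a $C^*$-algebra, it would equal $M_N(\C)$, so we would need $\dim R_J = N^2$. Theorem \ref{thm:RJdim} gives $\dim R_J = \sum_{j \leq 2J} (2j+1)^2 = \frac16(4J+1)(4J+2)(8J+3)$. This is a cubic in $J$, whereas
\[
N^2 = \tfrac{1}{36}(2J+1)^2(2J+2)^2(4J+3)^2
\]
is of degree six. I then check strict inequality $\dim R_J < N^2$ for all $J \geq \frac12$. At $J = \frac12$ we get $\dim R_J = 1 + 4 + 9 = 14$ and $N = 1 + 4 = 5$, so $14 < 25$. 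For larger $J$ I compare the two products directly, or argue by monotone growth of the ratio $N^2/\dim R_J$, which is increasing since $N \ge 5$ and $N$ grows like $J^3$. This shows $R_J \neq M_N(\C)$, so $R_J$ is not a $C^*$-algebra and $\prop(R_J) \geq 2$. Combined with the upper bound, $\prop(R_J) = 2$.

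The only step needing care is the elementary inequality $\dim R_J < N^2$ for all $J \geq \frac12$. A clean route is to note that $\dim R_J \leq \sum_{j\le 2J}(2j+1)^2 \leq (4J+1)^3$. Also $N \geq (2J+1)^2$, since $N$ includes the top term $j = J$, so $N^2 \geq (2J+1)^4$. It then suffices that $(4J+1)^3 < (2J+1)^4$. That inequality is false at $J = \frac12$, so there I use the explicit values above. For $J \geq 1$, I use instead $N \geq 1 + 4 + 9 = 14$ together with $N \geq (2J+1)^2$, and bound $N^2 \geq 14 (2J+1)^2$. This beats $(4J+1)^3$ only for small $J$, so for large $J$ I rely on the degree-six growth of the exact formula. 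In practice, comparing the explicit polynomials (the difference $36N^2 - 36\dim R_J$ is positive with positive leading behaviour, verified at small half-integers) is the most straightforward finish.
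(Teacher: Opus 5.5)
Your proposal is correct and follows the paper's argument: Theorem \ref{thm:RJpropnr} gives $C^*_{env}(R_J)=M_N(\C)$ and $\prop(R_J)\le 2$, and $\prop(R_J)\neq 1$ for $J\ge\frac12$ follows from $\dim R_J<N^2$, which the paper simply calls clear. Your bookkeeping for that inequality is muddled: the bound $(4J+1)^3<(2J+1)^4$ only holds from $J=3$ on, $N^2\ge 14(2J+1)^2$ only covers $J=1$, and $J=\frac32,2,\frac52$ are left to an unstated numerical check. It closes in one line instead, since $\dim R_J/N=\frac{2(4J+1)(8J+3)}{(2J+2)(4J+3)}<8\le N$ for $J\ge 1$, and $14<25$ at $J=\frac12$.
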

It is clear that $\prop(R_J) \neq 1$ for $J \geq \frac{1}{2}$ by comparing dimensions of $R_J$ in Theorem \ref{thm:RJdim} and of $Q_J L^2(G)$ as determined in Equation \eqref{eq:dimQJ}, establishing $\prop(R_J) = 2$.

\begin{remark}
    Although this theorem is phrased for connected compact Lie groups, the essential ingredients for the proof are not necessarily representation-theoretic. Instead, for a connected compact real-analytic manifold $M$, all that is required is an orthonormal basis $\{f_i\}_{i \in I}$ of $L^2(M)$ consisting of \textbf{real-analytic} functions. The theorem then holds for finite projections onto this basis. Here $K_\Lambda(x,x) = \sum_{i \in I} |f_i(x)|^2$ cannot be zero for all $x \in M$, which means $\{K_{\Lambda} \neq 0\}$ is nonempty and allows for the analyticity argument to go through. 
\end{remark}

Analyticity connects with another type of truncation, based on the Laplacian. Under suitable conditions the eigenfunctions of the Laplacian are real-analytic up to the boundary, which is a classical result in the regularity theory of elliptic operators \cite{Morrey1957}. On a real-analytic compact connected Riemannian manifold $(M,g)$, where also $g$ is assumed to be real-analytic, the eigenfunctions of the Laplacian (i.e. the Laplace-Beltrami operator) form an orthonormal basis of $L^2(M)$ consisting of analytic functions, so that one can consider truncations \cite{CHARRON2025110792}. In this context, the theorem can be reinterpreted in the following way.
\begin{corollary}
    Consider a compact connected real-analytic Riemannian manifold $(M,g)$ and the projection $P_\Lambda$ on $L^2(M)$ given by a spectral projection of the Laplacian on a finite amount of eigenvalues, $P_{\Lambda} = \chi_{[0,\Lambda]}(\Delta_{M})$. Then $R_{\Lambda} = P_\Lambda C(M) P_{\Lambda}$ has $C^*_{env}(R_{\Lambda}) = M_N(\C)$ and $\prop(R_{\Lambda}) \leq 2$, where $N = \dim P_{\Lambda}L^2(M)$. 
\end{corollary}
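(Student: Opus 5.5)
The plan is to run the argument from the proof of Theorem \ref{thm:RJpropnr} again, with the Peter-Weyl basis replaced by an orthonormal eigenbasis of the Laplacian. As the preceding Remark notes, only two features of $\{f_i\}$ are used there: it is an orthonormal basis of $P_\Lambda L^2(M)$, and each $f_i$ is real-analytic on a connected manifold.

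First I set up the basis. Since $M$ is compact, $\Delta_M$ has discrete spectrum with finite-dimensional eigenspaces, so $P_\Lambda=\chi_{[0,\Lambda]}(\Delta_M)$ has finite rank $N$. Choose an orthonormal basis $\{f_i\}_{i\in I}$ of $P_\Lambda L^2(M)$ made of eigenfunctions; real parts and imaginary parts can be taken, since $\Delta_M$ is real. Because $(M,g)$ is real-analytic, elliptic regularity \cite{Morrey1957} shows each $f_i$ is real-analytic. This is the only genuinely new input, and I would simply cite it.

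Next I repeat the kernel computation with the Riemannian volume measure in place of the Haar measure. Set $K_\Lambda(x,y)=\sum_i f_i(x)\overline{f_i(y)}$ and $K_B(x,y)=\sum_{i,j} \overline{b_{ji}} f_j(x)\overline{f_i(y)}$. Exactly as before, cyclicity of the trace together with Fubini gives
\[
\Tr([g][h]B^*)=\int_{M\times M} g(x)h(y)K_\Lambda(x,y)K_B(x,y)\,dx\,dy .
\]
Now suppose $B\perp R_\Lambda^{\circ 2}$. By Stone--Weierstrass, products $g(x)h(y)$ are dense in $C(M\times M)$, so the continuous function $K_\Lambda K_B$ vanishes identically.

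The analyticity step uses that $M\times M$ is a connected real-analytic manifold on which $K_\Lambda$ and $K_B$ are real-analytic. Split them into real and imaginary parts to apply the identity theorem to real-valued functions. The set $\{K_\Lambda\neq0\}$ is nonempty: $K_\Lambda(x,x)=\sum_i|f_i(x)|^2$, and this cannot vanish everywhere because the $f_i$ are nonzero (indeed $\Lambda\ge 0$ includes the constant eigenfunction). This set is also open, and $K_B$ vanishes on it, so $K_B\equiv0$ and hence $B=0$. Therefore $(R_\Lambda^{\circ2})^\perp=0$, so $R_\Lambda^{\circ2}=M_N(\C)$.

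From $R_\Lambda^{\circ2}=M_N(\C)$ both conclusions follow. The $C^*$-algebra generated by $R_\Lambda$ is all of $M_N(\C)$, so $C^*_{env}(R_\Lambda)=M_N(\C)$ by the finite-dimensional definition. Since $R_\Lambda^{\circ 2}$ is already a $C^*$-algebra, $\prop(R_\Lambda)\le 2$. The main obstacle is justifying real-analyticity of the eigenfunctions; everything else transfers verbatim.
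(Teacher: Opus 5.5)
Your proposal is correct and follows the paper's own route: you rerun the kernel argument from the proof of Theorem~\ref{thm:RJpropnr}, with a real-analytic orthonormal eigenbasis of $\Delta_M$ (analytic by elliptic regularity) in place of the matrix coefficients, exactly as the Remark preceding the corollary suggests. Two cosmetic points, both inherited from the paper's computation and harmless to the argument: the kernel that actually appears in $\Tr([g][h]B^*)$ is $K_\Lambda(x,y)K_B(y,x)$ rather than $K_\Lambda(x,y)K_B(x,y)$, and the identity-theorem step applies equally to $(x,y)\mapsto K_B(y,x)$; also, it is the linear span of the products $g(x)h(y)$, not the set of products itself, that is dense in $C(M\times M)$.
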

Examples of this setting include the spheres $S^n$, where the eigenfunctions are given by higher-dimensional spherical harmonics. These are polynomials when written in suitable coordinates, and are thus certainly analytic \cite{Efthimiou2014}. A possible interpretation of this result is that the spectrum of the Laplacian does not carry enough information to make the propagation number of truncated systems a meaningful way of distinguishing different manifolds, provided they are sufficiently nice. 

\begin{remark}
    In the case of vector-valued functions in $C(M,\C^n)$, the kernels $K_J, K_B$ in the proof of Theorem \ref{thm:RJpropnr} become matrix-valued. The calculation performed then gives that the trace of $K_J \cdot K_B$ is identically zero, but this is not enough to conclude that all of $K_B$ is zero. Thus, the spectral truncation corresponding to the Dirac operator may have a propagation number larger than 2.
\end{remark}

\section{Spectral truncations of $\SU(2)$}
In the previous section we have studied Peter-Weyl truncations of $G$, which are based solely on its representation theory. In this section, we instead consider a more geometric point of view by truncating the spectral triple $(C^{\infty}(G), L^2(S), D)$ arising from a spinor bundle and Dirac operator on $G = \SU(2)$. For $\SU(2)$ specifically, an analysis of the Dirac operator and spinor bundle has been performed by Homma \cite{Homma2000} based on the exceptional isomorphisms $\Spin(4) \cong \SU(2) \times \SU(2)$ and $\Spin(3) \cong \SU(2)$. The rank 2 spinor bundle $S \to G$ can be trivialized using bundle map $\Delta^+ \colon S \xrightarrow{\sim} G \times \C^2$ as described in \textit{loc. cit.} which identifies $L^2(S) \cong L^2(G) \tensor \C^2$. The Dirac operator in this trivialization is given by 
\begin{equation*}
    D = \frac{3}{2} + \sigma_1 Z_1 + \sigma_2 Z_2 + \sigma_3 Z_3
\end{equation*}
where
\begin{align*}
    \sigma_1 = \begin{pmatrix}
    i & 0 \\ 0 & -i
    \end{pmatrix}, \phantom{==} \sigma_2 = \begin{pmatrix}
    0 & 1 \\ -1 & 0
    \end{pmatrix}, \phantom{==}
    \sigma_3 = \begin{pmatrix}
    0 & i \\ i & 0
    \end{pmatrix}, 
\end{align*}
are Pauli matrices in $\mathfrak{su}(2)$ satisfying $\sigma_i \sigma_j + \sigma_j \sigma_i = -2\delta_{ij}$. The $Z_i$ are their associated right-invariant vector fields defined by
\begin{equation*}
    (Z_i f)(g) = \frac{d}{dt} f((\exp t \sigma_i) g) \mid_{t = 0}
\end{equation*}
for $f \in C^{\infty}(G)$. The eigenspinors of the Dirac operator can be found explicitly in this scenario, which is described in the following theorem.
\begin{theorem}[\cite{Homma2000}]\label{thm:eigenspinors}
    The eigenvalues of the spin-Dirac operator $D$ on $\SU(2)$ are given by $\pm(\frac{3}{2} + 2j)$ for $j \in \frac{1}{2}\N$. Let $E_{\pm j}$ denote the eigenspace corresponding to the eigenvalue $\pm(\frac{3}{2}+2j)$. Then $\dim E_{\pm j} = (2j+1)(2j+2)$, and orthonormal bases of $E_{+j}$ and $E_{-j}$ are given by
    $\{\psi^j_{m,m'} \mid  -j \leq m \leq j, \; -j-1 \leq m' \leq j\}$ and $\{\psi^{-j}_{m,m'}  \mid  -j \leq m \leq j+1, \; -j \leq m' \leq j\}$ respectively, with $m$ and $m'$ having the same parity as $j$, where
    \begin{align*}
        \psi^{+j}_{m,m'} &= \frac{1}{\sqrt{2j+1}} \begin{pmatrix}
        -\sqrt{j-m'} f^{j}_{m,m'+1} \\
        \sqrt{j+1+m'} f^{j}_{m,m'}
        \end{pmatrix} \phantom{=}&& (-j \leq m \leq j, \; -j-1 \leq m' \leq j) \\
        \psi^{-j}_{m,m'} &= \frac{1}{\sqrt{2j+2}} \begin{pmatrix}
        \sqrt{j+1+m'} f^{j+\frac{1}{2}}_{m-\frac{1}{2}, m' + \frac{1}{2}} \\
        \sqrt{j+1-m'} f^{j+\frac{1}{2}}_{m-\frac{1}{2},m'-\frac{1}{2}}
        \end{pmatrix} \phantom{=}&& (-j \leq m \leq j+1, \; -j \leq m' \leq j).
    \end{align*}
	The $+$ and $-$ in $E_{+j}$ and $E_{-j}$ should be thought of as a label instead of an operation on $j$, considering that $E_{+0} \neq E_{-0}$. 
    A decomposition of $L^2(S)$ is given by
    \begin{equation*}
        L^2(S) = \overline{\bigoplus_{j\in\frac{1}{2}\N} E_{-j} \oplus E_{+j}}
    \end{equation*}
    where the completion is taken in the $L^2(S)$-norm.
\end{theorem}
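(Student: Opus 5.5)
The proof diagonalises $D$ block by block, using the fact that right-invariant vector fields act only on the second index of a matrix coefficient. Differentiating $f^j_{m,m'}(g)=\sqrt{2j+1}\langle \pi_j(g)e_m,e_{m'}\rangle$ along $(\exp t\sigma_i)g$ gives
\begin{equation*}
(Z_i f^j_{m,m'})(g)=\sqrt{2j+1}\,\langle \pi_j(g)e_m, d\pi_j(\sigma_i)^*e_{m'}\rangle .
\end{equation*}
So $Z_i$ acts on $m'$ through the (adjoint) infinitesimal representation and leaves $j$ and $m$ fixed.

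\textbf{Step 1 (reduction to finite blocks).} Fix $j$ and $m$. Let $W_{j,m}$ be the span of $f^j_{m,m'}\otimes v$ over all $m'$ and all $v\in\C^2$. By the formula above, $D$ preserves $W_{j,m}$. As a representation in the index $m'$, $W_{j,m}\cong V_j\otimes\C^2$, and on it
\begin{equation*}
D=\tfrac32+\sum_i \sigma_i\otimes \rho_j(\sigma_i),
\end{equation*}
where $\rho_j$ is the dual action of $\mathfrak{su}(2)$ on $V_j$. By Peter--Weyl, $L^2(S)\cong L^2(G)\otimes\C^2$ is the Hilbert direct sum of all the $W_{j,m}$. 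Completeness of the eigendecomposition therefore follows once each finite block is diagonalised.

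\textbf{Step 2 (Casimir computation).} Since $\sigma_i^2=-1$, the operator $\sum_i\sigma_i\otimes\rho_j(\sigma_i)$ is, up to a constant, the spin--orbit operator $\mathbf S\cdot\mathbf L$ on $V_{1/2}\otimes V_j$. By the Clebsch--Gordan decomposition $V_j\otimes V_{1/2}\cong V_{j+1/2}\oplus V_{j-1/2}$, we have
\begin{equation*}
\mathbf S\cdot\mathbf L=\tfrac12\bigl(J(J+1)-j(j+1)-\tfrac34\bigr)=\begin{cases} \tfrac{j}{2}, & J=j+\tfrac12,\\ -\tfrac{j+1}{2}, & J=j-\tfrac12.\end{cases}
\end{equation*}
With the factor $4$ coming from the normalisation $\sigma_i\sigma_j+\sigma_j\sigma_i=-2\delta_{ij}$, the block $W_{j,m}$ has two eigenvalues:
\begin{itemize}
\item $\tfrac32+2j$, with multiplicity $2j+2$;
\item $-(\tfrac12+2j)=-(\tfrac32+2(j-\tfrac12))$, with multiplicity $2j$.
\end{itemize}
The eigenvalue $+(\tfrac32+2j)$ arises only from blocks at level $j$, for each of the $2j+1$ values of $m$. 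This gives $E_{+j}$ with $\dim E_{+j}=(2j+1)(2j+2)$.

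The eigenvalue $-(\tfrac32+2j)$ arises only from blocks at level $j+\tfrac12$, for each of the $2j+2$ values of the first index. Each such block contributes multiplicity $2(j+\tfrac12)=2j+1$. This gives $\dim E_{-j}=(2j+2)(2j+1)$.

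This block structure explains why $\psi^{-j}$ is built from $f^{j+1/2}$ and why its first index ranges over $2j+2$ values. The case $j=0$ at level $j+\tfrac12$ also shows why $E_{+0}\neq E_{-0}$.

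\textbf{Step 3 (explicit eigenvectors and orthonormality).} The components of $\psi^{\pm j}_{m,m'}$ are the Clebsch--Gordan coefficients for coupling spin $\tfrac12$ to spin $j$, namely $\sqrt{(j\pm m'+\ldots)/(2j+1)}$ and their analogues. I would verify each eigenvector claim directly. Write $\sum_i\sigma_i\otimes\rho_j(\sigma_i)$ in terms of $\sigma_1\otimes\rho_j(\sigma_1)$, which is diagonal in the weight basis, together with the raising/lowering combinations built from $\sigma_2,\sigma_3$. Then apply the standard ladder formulas $\sqrt{(j\mp m')(j\pm m'+1)}$ to each $\psi$. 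Orthonormality follows from the orthonormality of the $f^j_{m,m'}$ together with $(j-m')+(j+1+m')=2j+1$ (and $2j+2$ respectively). Different blocks are automatically orthogonal.

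\textbf{Main obstacle.} The hardest part is bookkeeping of conventions. One must determine whether $Z_i$ acts by $d\pi_j(\sigma_i)$ or its adjoint/dual on the index $m'$, and correspondingly fix the shift $m'\mapsto m'+1$ and the overall signs. Getting these wrong would swap the roles of $E_{+j}$ and $E_{-j}$, or produce $\mp$ signs in the components. I would first fix $\sigma_1\mapsto$ the diagonal action $i\cdot(\text{weight})$ and check the simplest cases $j=0$ and $j=\tfrac12$ by hand, then carry those conventions through the general computation.
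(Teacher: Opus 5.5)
The paper does not prove this statement. It quotes it from Homma, and afterwards only remarks that $E_{+j}\cong V_{j+\frac12}\tensor V_j$ and $E_{-j}\cong V_j\tensor V_{j+\frac12}$ (times a one-dimensional $\Hom_G$-space). That is Homma's picture of the eigenspaces as irreducible $\Spin(4)\cong\SU(2)\times\SU(2)$-modules.

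Your direct diagonalisation is a correct, self-contained alternative, and it reproduces exactly this structure. The index $m$ that $D$ leaves untouched is the second tensor factor, and the spin--orbit coupled factor $V_{j\pm\frac12}$ of each block $W_{j,m}$ is the first. The multiplicities $2j+2$ and $2j$ add up to $\dim W_{j,m}=4j+2$, so your explicit orthonormal spinors span each block, and completeness then follows from Peter--Weyl. The $\Spin(4)$ route explains the multiplicities by symmetry; yours produces the explicit formulas for $\psi^{\pm j}_{m,m'}$ by elementary means.

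The one step you should pin down, rather than leave to a convention check, is the sign of the spin--orbit term, and it is not a convention. In the basis $\{f^j_{m,m'}\}_{m'}$, the field $Z_i$ acts by the matrix $d\pi_j(\sigma_i)^{T}$. This is \emph{minus} the dual representation, because right-invariant fields form an anti-representation: $[Z_1,Z_2]=-2Z_3$, whereas $[\sigma_1,\sigma_2]=2\sigma_3$ on $\C^2$.

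Set $\mathbf S=\frac{i}{2}(\sigma_1,\sigma_2,\sigma_3)$ and $\mathbf L=-\frac{i}{2}(Z_1,Z_2,Z_3)$. Both satisfy $[A_1,A_2]=iA_3$, and $\sum_i\sigma_iZ_i=4\,\mathbf S\cdot\mathbf L$, which is what produces your eigenvalues. If you read $\rho_j$ literally as the dual representation, you get $\frac32-4\,\mathbf S\cdot\mathbf L$, with eigenvalues $\frac32-2j$ and $\frac72+2j$. That is not a swap of $E_{+j}$ and $E_{-j}$, as you suggest, but a wrong spectrum.

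Only the phases of the weight basis are genuinely conventional. Take $d\pi_j(\sigma_1)e_m=2im\,e_m$, and let $E_{12},E_{21}$ act by the standard ladder operators $e_m\mapsto\sqrt{(j\mp m)(j\pm m+1)}\,e_{m\pm1}$. Then
\begin{gather*}
Z_1f^j_{m,m'}=2im'\,f^j_{m,m'},\\
(Z_2+iZ_3)f^j_{m,m'}=-2\sqrt{(j-m')(j+m'+1)}\,f^j_{m,m'+1},\\
(-Z_2+iZ_3)f^j_{m,m'}=-2\sqrt{(j+m')(j-m'+1)}\,f^j_{m,m'-1}.
\end{gather*}
Since $\sum_i\sigma_iZ_i=\begin{pmatrix} iZ_1 & Z_2+iZ_3\\ -Z_2+iZ_3 & -iZ_1\end{pmatrix}$, two lines of algebra give $D\psi^{\pm j}_{m,m'}=\pm(\frac32+2j)\,\psi^{\pm j}_{m,m'}$ with exactly the stated coefficients.
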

The eigenspaces $E_{+j}$ and $E_{-j}$ can be viewed in terms of the representation theory of $\SU(2)$. It holds that $E_{+j} = V_{j+\frac{1}{2}} \tensor V_j \tensor \Hom_G(V_{j + \frac{1}{2}} \tensor V_j, V_{\frac{1}{2}})$, where $\Hom_G(V,W)$ denotes the vector space of intertwiners from $V$ to $W$, which has dimension one in this case. An element $v \tensor w \tensor A$ can be interpreted as a function $G \to \C^2$ by using $V_{\frac{1}{2}} \cong \C^2$ and considering $\psi(g) = A(v \tensor \pi_{j}(g)w)$. Similarly, $E_{-j} = V_{j} \tensor V_{j+\frac{1}{2}} \tensor \Hom_G(V_{j} \tensor V_{j + \frac{1}{2}}, V_{\frac{1}{2}})$ where some $v \tensor w \tensor A$ defines a function via $\psi(g) = A(v \tensor \pi_{j+\frac{1}{2}}(g)w)$. \\
\\
For $J \in \frac{1}{2}\N$ the (Dirac) spectral projection is defined by
$$P_J = \chi_{[-(2J+\frac{3}{2}), 2J+\frac{3}{2}]}(D) \colon L^2(S) \to L^2(S)$$ where $\chi_A \colon \R \to [0,1]$ is the characteristic function of the set $A$. Since the eigenvalues of $D$ are precisely $\pm (2j + \frac{3}{2})$ for $j \in \frac{1}{2}\N$, we have that
\begin{equation*}
    P_J L^2(S) = \bigoplus_{\substack{j = 0 \\ j \in \frac{1}{2}\N}}^J E_{-j} \oplus E_j,
\end{equation*} 
and hence $$\dim P_J L^2(S) = 2\sum_{\substack{j=0 \\ j \in \frac{1}{2}\N}}^{J} (2j+1)(2j+2) = \frac{2}{3}(2J+1)(2J+2)(2J+3).$$
Define the \textbf{spectral truncation} of $C^\infty(G)$ acting on $L^2(S)$ to be $S_J := P_J C^\infty(G) P_J$ acting on $P_J L^2(S)$. For an $f \in C^\infty(G)$, we denote the spectral truncation of $f$ at level $J$ by $\spintrunc[J]{f}$ and for a matrix coefficient $f^{j}_{m,m'}$ we write $\spintrunc[J]{jmm'} := \spintrunc[J]{f^{j}_{m,m'}}$. In the rest of this chapter, we fix $J$ as the truncation level and use $\spintrunc{jmm'}$ for the corresponding spectrally truncated matrices.\\
\\
We furthermore use the notation $\II_S(J)$ for the set of labels for basis spinors in $P_JL^2(S)$, so that elements of $S_J$ can be seen as matrices with rows and columns indexed by $\II_S(J)$. To make this more precise, we write $\jrelplus{m,m'}{j}$ for a triple $jmm'$ such that $\psi^{+j}_{m,m'}$ is a sensible positive spinor, for a half-integer $j \in \frac{1}{2}\N$. That is, when writing $\jrelplus{m,m'}{j}$ we mean that $m \in \{-j, -j+1, \ldots, j\}$ and $m' \in \{-j-1, -j, \ldots, j\}$. Similarly, when we write $\jrelminus{m,m'}{j}$ we mean that $\psi^{-j}_{m,m'}$ is a sensible negative spinor, which is equivalent to $m \in \{-j,-j+1, \ldots, j+1\}$ and $m' \in \{-j, -j+1, \ldots, j\}$. 
With this notation, $\II_S(J)$ is the set containing the half-integer triples $(+j)mm'$ with $0 \leq j \leq J$ and $\jrelplus{m,m'}{j}$, and the half-integer triples $(-j)mm'$ with $0 \leq j \leq J$ and $\jrelminus{m,m'}{j}$. Since $\II_S(J)$ contains both positive and negative triples, when we write $jmm \in \II_S(J)$ it is possible that $j$ is positive or negative. When we want to stress that we are considering either positive or negative eigenspinors, we may write $+jmm$ or $-jmm$ respectively. Entries of the matrix $\spintrunc{jmm}$ are   \begin{equation*}
	\spintrunc{jmm'}^{j_1 m_1 m_1'}_{j_2 m_2 m_2'} = \langle f^{j}_{m,m'} \psi^{j_1}_{m_1, m_1'}, \psi^{j_2}_{m_2, m_2'} \rangle
\end{equation*}
where $j_1 m_1 m_1', \; j_2 m_2 m_2' \in \II_S(J)$. \\
\\
We furthermore say that the \textbf{degree} of $\psi^{+j}_{m,m'}$ is $j$, while the degree of $\psi^{-j}_{m,m'}$ is $j+\frac{1}{2}$, corresponding to the labels of irreps which appear in their definitions. Alternatively, we may treat the $+$ or $-$ sign as a label, writing $\deg(j) = j$ and $\deg(-j) = j+\frac{1}{2}$ for $j \geq 0$. As the definition of $\psi^{-j}_{m,m'}$ spinors contains several occurrences of $+\frac{1}{2}$ and $-\frac{1}{2}$, we introduce the following notation for convenience.
\begin{definition}
    For a half-integer $k \in \frac{1}{2}\N$, we define $k^+ := k + \frac{1}{2}$ and $k^- := k - \frac{1}{2}$.
\end{definition}
For the Peter-Weyl truncation, we combined Equation \eqref{eq:matrixentriesRJ} and the conditions Proposition \ref{prop:CGcond} to determine when coefficients in a matrix $\trunc{jmm'}$ can be nonzero. The equivalent for $\spintrunc{jmm'}$ is described in the following lemma.
\begin{lemma}
    Assume that an inner product of the form
    \begin{equation*}
        \langle f^{j}_{m, m'} \psi^{j_1}_{m_1, m_1'}, \psi^{j_2}_{m_2, m_2'} \rangle
    \end{equation*}
    is nonvanishing, with suitable labels $j,m,m',j_i, m_i, m_i'$ for $i = 1,2$, where $j_1$ and $j_2$ can be positive or negative. Then it holds that $$\deg(j_2) \in \{|j - \deg(j_1)|, |j - \deg(j_1)| + 1, \ldots, j + \deg(j_1)\}. $$
    Furthermore, the magnetic moments add together based on the signs of $j_1$ and $j_2$ according to Table \ref{tab:magneticmomentsadd}.
    \begin{table}[H]
    \centering
    \begin{tabular}{|c|c|c|c|}
    \hline
    $\sign(j_1)$ & $\sign(j_2)$ & $m$ addition rule            & $m'$ addition rule              \\ \hline
    +            & +            & $m + m_1 = m_2$   & $m' + m_1' = m_2'$   \\ \hline
    +            & -            & $m + m_1 = m_2^-$ & $m' + m_1' = m_2'^-$ \\ \hline
    -            & +            & $m + m_1^- = m_2$ & $m' + m_1'^- = m_2'$ \\ \hline
    -            & -            & $m + m_1 = m_2$   & $m' + m_1' = m_2'$   \\ \hline
    \end{tabular}
    \caption{Magnetic moment conditions for spinors depending on the sign of the spinors.}    \label{tab:magneticmomentsadd}
    \end{table}
\end{lemma}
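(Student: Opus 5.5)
The plan is to reduce the inner product to sums of $L^2(G)$ inner products of triple products of matrix coefficients, and then read off the conditions from Equation \eqref{eq:matcoeffprod} and Proposition \ref{prop:CGcond}. By Theorem \ref{thm:eigenspinors}, each basis spinor $\psi^{j_i}_{m_i,m_i'}$ is a $\C^2$-valued function. Its two components are scalar multiples of matrix coefficients of the single irrep of label $\deg(j_i)$. Which coefficients occur depends on the sign of $j_i$:
\begin{itemize}
\item for $+j_i$ they are $f^{j_i}_{m_i,m_i'+1}$ (top) and $f^{j_i}_{m_i,m_i'}$ (bottom);
\item for $-j_i$ they are $f^{j_i^+}_{m_i^-,m_i'^+}$ (top) and $f^{j_i^+}_{m_i^-,m_i'^-}$ (bottom).
\end{itemize}
Since multiplication by $f^j_{m,m'}$ acts componentwise on $L^2(G)\tensor\C^2$, the inner product equals a linear combination of two terms. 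Each term has the form $\langle f^j_{m,m'} f^{\deg(j_1)}_{a_1,b_1}, f^{\deg(j_2)}_{a_2,b_2}\rangle_{L^2(G)}$, with $(a_i,b_i)$ the top or bottom labels above. The top component of one spinor pairs only with the top component of the other, and likewise for the bottom. So if the whole expression is nonzero, at least one of these two scalar inner products is nonzero.

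Next I would expand $f^j_{m,m'}f^{\deg(j_1)}_{a_1,b_1}$ using Equation \eqref{eq:matcoeffprod}. By Peter--Weyl orthonormality, the pairing with $f^{\deg(j_2)}_{a_2,b_2}$ picks out exactly one summand. That summand is a multiple of $C^{j,\deg(j_1),\deg(j_2)}_{m,a_1,a_2}\,C^{j,\deg(j_1),\deg(j_2)}_{m',b_1,b_2}$. Nonvanishing of either CGC, via the triangle condition of Proposition \ref{prop:CGcond}, gives the stated condition on $\deg(j_2)$. The addition-of-weights condition gives $m+a_1=a_2$ and $m'+b_1=b_2$.

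It remains to translate these into Table \ref{tab:magneticmomentsadd} case by case.
\begin{itemize}
\item For $(+,+)$: $a_i=m_i$, giving $m+m_1=m_2$. The second index is $m_i'+1$ in both entries or $m_i'$ in both, and either way the shift cancels, giving $m'+m_1'=m_2'$.
\item For $(-,-)$: the shifts $m_i^-$ cancel in the first index, and the second-index shifts $\pm\frac12$ match componentwise and also cancel.
\item For $(+,-)$ in the top component: $m+m_1=m_2^-$ and $m'+m_1'+1=m_2'^+$, i.e. $m'+m_1'=m_2'^-$. The bottom component gives the same $m'$ rule, namely $m'+m_1'=m_2'^-$.
\item For $(-,+)$: symmetrically, $m+m_1^-=m_2$ and $m'+m_1'^-=m_2'$.
\end{itemize}
In each case both components yield the same constraint, so the rule holds whenever the inner product is nonzero.

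The main obstacle is bookkeeping rather than anything conceptual. One has to check that the two components give identical conditions in each sign case, since otherwise only a disjunction would follow. The index shifts by $\pm1$ and $\pm\frac12$ in the definition of $\psi^{\pm j}$ are arranged exactly so that this happens. One should also note that the square-root prefactors may vanish at boundary values of $m_i'$. That only kills terms and does not affect the necessity statements.
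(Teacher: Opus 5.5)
Your proposal is correct and takes the same approach as the paper, whose proof simply cites Proposition \ref{prop:CGcond} together with the explicit component formulas for $\psi^{\pm j}_{m,m'}$ from Theorem \ref{thm:eigenspinors}. Your case-by-case check that the top and bottom components give identical weight constraints, together with your remark about vanishing square-root prefactors, is exactly the bookkeeping the paper leaves implicit.
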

\begin{proof}
    This follows from Proposition \ref{prop:CGcond} and the explicit writing of positive and negative eigenspinors.
\end{proof}
Since the following spinors and inner products involving them will come up often, we record some facts in a separate lemma.
\begin{lemma}\label{lem:maxspinorsinprod}
For $j \in \frac{1}{2}\N$ and $\jrel{m}{j}$ it holds that
\begin{align}\label{eq:specialspinors}
    \psi^{+j}_{m,j} = \begin{pmatrix}
        0 \\ f^{j}_{m,j}
    \end{pmatrix}
    &\text{ and } \: \psi^{+j}_{m,-j-1} = -\begin{pmatrix}
    f^j_{m,-j} \\ 0
    \end{pmatrix}.
\end{align}
Consequently, we have the following formulas.
\begin{align*}
    \langle f^{j_1}_{m_1, m_1'} \psi^{+j_2}_{m_2, m_2}, \psi^{+j}_{m, j} \rangle &= \sqrt{\frac{2j_1+1}{2j+1}}\sqrt{j_2+1+m_2'} C^{j_1, j_2, j}_{m_1, m_2, m} C^{j_1, j_2, j}_{m_1', m_2', j} \\
    \langle f^{j_1}_{m_1, m_1'} \psi^{-j_2}_{m_2, m_2}, \psi^{+j}_{m,j} \rangle &= \sqrt{\frac{2j_1 + 1}{2j+1}} \sqrt{j_2 + 1 - m_2'} C^{j_1, j_2^+, j}_{m_1, m_2^-, m} C^{j_1, j_2^+, j}_{m_1', m_2'^-, j} \\
    \langle f^{j_1}_{m_1, m_1'} \psi^{+j_2}_{m_2, m_2'}, \psi^{+j}_{m,-j-1} \rangle &= \sqrt{\frac{2j_1+1}{2j+1}}\sqrt{j_2-m_2'} C^{j_1, j_2, j}_{m_1, m_2, m} C^{j_1, j_2, j}_{m_2', m_2'+1, -j} \\
    \langle f^{j_1}_{m_1, m_1'} \psi^{-j_2}_{m_2, m_2'}, \psi^{+j}_{m,-j-1} \rangle &= -\sqrt{\frac{2j_1+1}{2j+1}} \sqrt{j_2+1+m_2'} C^{j_1, j_2^+, j}_{m_1, m_2^-, m} C^{j_1, j_2^+, j}_{m_1', m_2'^+, -j}
\end{align*}
\end{lemma}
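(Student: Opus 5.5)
The plan is first to verify the two special spinors in \eqref{eq:specialspinors} directly from the formula for $\psi^{+j}_{m,m'}$ in Theorem \ref{thm:eigenspinors}. Setting $m'=j$ kills the top entry, since it carries the factor $\sqrt{j-m'}=0$. The bottom entry becomes $\frac{\sqrt{2j+1}}{\sqrt{2j+1}} f^j_{m,j}=f^j_{m,j}$. Setting $m'=-j-1$ kills the bottom entry, since $\sqrt{j+1+m'}=0$. The top entry becomes $-\frac{\sqrt{2j+1}}{\sqrt{2j+1}}f^j_{m,-j}$. This gives both identities at once.

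For the four inner product formulas, I will use that the $\C^2$-valued inner product on $L^2(S)\cong L^2(G)\tensor\C^2$ is the sum of the componentwise $L^2(G)$ inner products. When $\psi^{+j}_{m,j}$ is paired against anything, only the second component survives. When $\psi^{+j}_{m,-j-1}$ is paired, only the first component survives, together with an overall minus sign. The remaining steps are:
\begin{itemize}
\item Substitute the relevant component of $\psi^{\pm j_2}_{m_2,m_2'}$, i.e.\ $\frac{1}{\sqrt{2j_2+1}}\sqrt{j_2+1+m_2'}f^{j_2}_{m_2,m_2'}$ in the first case.
\item Reduce each inner product to $\langle f^{j_1}_{m_1,m_1'}f^{k}_{a,a'},f^{j}_{m,\pm j}\rangle$ with $k\in\{j_2,j_2^+\}$.
\item Evaluate this with the product formula \eqref{eq:matcoeffprod}. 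By orthonormality of the $f^j_{m,m'}$, the coefficient $\frac{\sqrt{2j_1+1}\sqrt{2k+1}}{\sqrt{2j+1}}C^{j_1,k,j}_{m_1,a,m}C^{j_1,k,j}_{m_1',a',\pm j}$ is singled out.
\end{itemize}
The factor $\sqrt{2k+1}$ then cancels the normalization of the spinor. For positive spinors it cancels $\frac{1}{\sqrt{2j_2+1}}$ directly. For negative spinors the normalization is $\frac{1}{\sqrt{2j_2+2}}$ and $k=j_2^+$, so $\sqrt{2k+1}=\sqrt{2j_2+2}$ and it cancels as well. What is left is the prefactor $\sqrt{(2j_1+1)/(2j+1)}$ times the square root coming from the spinor component.

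Next I match each case:
\begin{itemize}
\item Second component of $\psi^{+j_2}$: gives $\sqrt{j_2+1+m_2'}$ with labels $(m_2,m_2')$.
\item Second component of $\psi^{-j_2}$: gives $\sqrt{j_2+1-m_2'}$ with $f^{j_2^+}_{m_2^-,m_2'^-}$.
\item First component of $\psi^{+j_2}$: gives $-\sqrt{j_2-m_2'}$ with $f^{j_2}_{m_2,m_2'+1}$. Combined with the minus sign from $\psi^{+j}_{m,-j-1}$, this yields a plus sign.
\item First component of $\psi^{-j_2}$: gives $+\sqrt{j_2+1+m_2'}$ with $f^{j_2^+}_{m_2^-,m_2'^+}$. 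Combined with the outer minus sign, this yields the overall minus sign.
\end{itemize}

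The main obstacle is bookkeeping rather than a real difficulty. The displayed formulas appear to contain typos in the subscripts, such as $\psi^{+j_2}_{m_2,m_2}$ for $\psi^{+j_2}_{m_2,m_2'}$, and $C_{m_2',m_2'+1,-j}$ where $C^{j_1,j_2,j}_{m_1',m_2'+1,-j}$ should stand. I would state the corrected labels explicitly. I would also check the sign and the reality conventions: since the CGCs are real, the complex conjugation in the inner product introduces no extra factors.
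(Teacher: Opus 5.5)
Your proposal is correct and follows the same route as the paper. The paper's proof substitutes $m'=j$ and $m'=-j-1$ into the eigenspinor formulas of Theorem \ref{thm:eigenspinors}, then evaluates the inner products componentwise with the product formula \eqref{eq:matcoeffprod}, using orthonormality and the normalization cancellation $\sqrt{2j_2^+ +1}=\sqrt{2j_2+2}$. Your typo corrections are also right: the first two formulas should read $\psi^{\pm j_2}_{m_2,m_2'}$, and the third formula's second coefficient should be $C^{j_1,j_2,j}_{m_1',m_2'+1,-j}$. This matches how the lemma is used in the proof of Lemma \ref{lem:Mlempropnr}.
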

\begin{proof}
    Equation \eqref{eq:specialspinors} follows from substituting $m' = j$ and $m' = -j-1$ into the definition of the spinors from Theorem \ref{thm:eigenspinors}, while the inner products can be calculated using Equation \eqref{eq:matcoeffprod}.
\end{proof}

We have the following result for the dimension of $S_J$.
\begin{theorem}\label{thm:SJdim}
    The matrices $\{\spintrunc{jmm'}\}_{jmm \in \II(2J+\frac{1}{2})}$ form a basis of $S_J$. As such, the dimension of $S_J$ is given by
    \begin{equation*}
        \dim S_J = \sum_{\substack{j=0 \\ j \in \frac{1}{2}\N}}^{2J+\frac{1}{2}} (2j+1)^2 = \frac{1}{6}(4J + 2)(4J + 3)(8J + 5)
    \end{equation*}
\end{theorem}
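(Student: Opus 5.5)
I would mirror the proof of Theorem \ref{thm:RJdim}, in two halves: a vanishing statement for high degree and a linear independence statement for the remaining truncations. For the first half, note that every basis spinor in $P_JL^2(S)$ has degree at most $\deg(-J) = J+\frac{1}{2}$. By the degree condition in the lemma preceding Table \ref{tab:magneticmomentsadd}, a nonzero entry $\spintrunc{jmm'}^{j_1m_1m_1'}_{j_2m_2m_2'}$ requires $\deg(j_2) \geq j - \deg(j_1)$, i.e. $j \leq \deg(j_1)+\deg(j_2) \leq 2J+1$. The bound therefore has to be pushed down by one half. If $j = 2J+1$, both spinors must be negative of top level $-J$. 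In the entry $\langle f^{j}_{m,m'}\psi^{-J}_{m_1,m_1'},\psi^{-J}_{m_2,m_2'}\rangle$, each spinor component is built from $f^{J+\frac12}$. Each component pairing then carries a CGC $C^{2J+1,J+\frac12,J+\frac12}$ in the second index slot with weights $m'$, $m_1'\pm\frac12$, $m_2'\pm\frac12$, and this slot needs $|m'|= 2J+1$. But the components are weighted by $\sqrt{J+1\pm m_1'}$ with $|m_1'|\le J$. So $m_1'\pm\frac12$ and $m_2'\pm\frac12$ must equal $\pm(J+\frac12)$ simultaneously with the same sign, forcing the sign pattern to be the same in both spinors. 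I expect that summing the two components then gives a cancellation, or a direct CGC vanishing, showing $\spintrunc{(2J+1)mm'}=0$. This is the step I expect to be the main obstacle. If no cancellation occurs, the statement would be false, so it must be checked carefully with the explicit top CGCs $C^{J^+,J^+,2J+1}_{\cdot,\cdot,\cdot}$ (stretched coefficients, which are explicit products of square roots). Combined with the triangle bound, this shows $S_J\subseteq\spann\{\spintrunc{jmm'}\}_{\II(2J+\frac12)}$.

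For the second half, suppose $\sum_{\II(2J+\frac12)}\lambda_{jmm'}\spintrunc{jmm'}=0$. I would test this on well-chosen pairs of spinors, using the special spinors of Lemma \ref{lem:maxspinorsinprod}, whose single nonzero component turns each entry into a product of two CGCs as in Equation \eqref{eq:matrixentriesRJ}. First I take $j_1mm'$ to be the spinor $\psi^{+0}_{0,0}$, or $\psi^{+0}_{0,-1}$, which is essentially the constant function times a basis vector of $\C^2$. This recovers $\lambda_{jmm'}$ for small $j$ by the argument of the $000$ step, now with $j_2$ ranging over positive spinors $\psi^{+j}_{m,j}$ and $\psi^{+j}_{m,-j-1}$. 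Every $m'$ with $|m'|\le j$ is reached: $m'\le 0$ from the pairing with $C^{\cdot}_{\cdot,\cdot,-j}$, and $m'\geq 0$ from the one with $C^{\cdot}_{\cdot,\cdot,j}$.

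Then I induct on $j$ from $J$ up to $2J+\frac12$ in half steps, as before. For the target $\lambda_{(L+\frac12)ll'}$ I choose $j_2$ to be a top positive spinor $\psi^{+J}_{l+r,\pm J\,\text{or}\,-J-1}$ and $j_1$ a spinor of degree $s=L+\frac12-J$. For $s\le J$ a positive spinor $\psi^{+s}_{r,\cdot}$ is used, and for $s=J+\frac12$ the negative spinor $\psi^{-J}$, through the second and fourth formulas of Lemma \ref{lem:maxspinorsinprod}. The weights are extremal, $r=-\sign(l)s$ and $m_1'$ chosen so that the second CGC has an extremal entry. The triangle condition then forces $j\le L+\frac12$ and the weight rule forces $m=l$, $m'=l'$. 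The induction hypothesis kills all lower terms, and Lemma \ref{lem:CGnonvanish} shows the remaining CGC product is nonzero; one must also check the prefactor $\sqrt{j_2+1\pm m_2'}$ does not vanish. Finally, the dimension formula follows by summing $(2j+1)^2$ as in Theorem \ref{thm:RJdim}.
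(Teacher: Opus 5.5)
\paragraph{Gap 1: the vanishing at $j=2J+1$.} Your reduction to $j_1=j_2=-J$ is correct. The heuristic you give for that case is wrong, though, and the cancellation itself is never proved.

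The coefficient $C^{2J+1,J^+,J^+}_{m',\,m_1'\pm\frac12,\,m_2'\pm\frac12}$ does not require $|m'|=2J+1$. By \eqref{eq:CGsym} it equals, up to a sign and a positive factor, the stretched coefficient $C^{J^+,J^+,2J+1}_{-(m_1'\pm\frac12),\,m_2'\pm\frac12,\,m'}$. By \eqref{eq:CGj1plusj2} this is nonzero whenever $m'=m_2'-m_1'$. So both components of $\psi^{-J}$ genuinely contribute, and nothing forces a common sign pattern.

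What makes the entry vanish is the argument of Lemma~\ref{lem:2Jplus1vanish}. After factoring out the common coefficient $C^{2J+1,J^+,J^+}_{m,m_1^-,m_2^-}$, the symmetry \eqref{eq:CGsym} attaches the signs $(-1)^{J+m_1'+1}$ and $(-1)^{J+m_1'}$ to the two terms. Lemma~\ref{lem:CGDiffzero}, read off from \eqref{eq:CGj1plusj2}, then shows that $\sqrt{J+1+m_1'}\sqrt{J+1+m_2'}\,C^{J^+,J^+,2J+1}_{-m_1'^+,m_2'^+,m'}$ and $\sqrt{J+1-m_1'}\sqrt{J+1-m_2'}\,C^{J^+,J^+,2J+1}_{-m_1'^-,m_2'^-,m'}$ are equal, so the two terms cancel. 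This is exactly the new ingredient compared with Theorem~\ref{thm:RJdim}. Without it you only obtain a spanning set indexed by $\II(2J+1)$.

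\paragraph{Gap 2: the coordinates in the linear-independence argument.} Your coordinates do not reach all targets. The cause is that you put the single-component spinors $\psi^{+J}_{\cdot,J}$ and $\psi^{+J}_{\cdot,-J-1}$ in the row slot $j_2$, which pins the second weight.

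\emph{Base step.} The spinors $\psi^{+0}_{0,0}$ and $\psi^{+0}_{0,-1}$ are constant vectors, so the entries you use are just $\langle f^{k}_{a,a'},f^{j}_{m,\pm j}\rangle$. There is no Clebsch--Gordan coupling here, so you recover only $\lambda_{jm,\pm j}$, not every $m'$. The fix is the paper's: take $j_2=+jmm'$ arbitrary. Then $\psi^{+0}_{0,0}=(0,1)^T$ sees only the second component, whose prefactor $\sqrt{j+1+m'}/\sqrt{2j+1}$ is nonzero.

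\emph{Induction step.} Take $j_2=\psi^{+J}_{l+r,J}$ and $j_1$ of degree $s\le J$. The weight rule reads $l'+\mu=J$, where $\mu$ is the second weight of a component of $\psi^{j_1}$. Since $|\mu|\le s$, this gives $l'\in[J-s,J+s]$; using $\psi^{+J}_{\cdot,-J-1}$ instead gives $l'\in[-J-s,-J+s]$.
\begin{itemize}
\item For $s\le J-1$, the weights with $|l'|<J-s$ are never reached. For example, with $J=\frac32$ and $s=\frac12$, no coordinate in your family sees $\spintrunc{2,l,0}$.
\item Your top level $s=J+\frac12$ with $j_1=-J$ is fine, since there the two windows cover $[-2J-\frac12,2J+\frac12]$.
\end{itemize}

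The cure is to put the extremal one-component spinor in the column and a generic spinor in the row. Then the extremal weight sits in the small-spin slot and the larger spin absorbs $l'$, as in the choice $srr'$ in Theorem~\ref{thm:RJdim}. The paper takes
\begin{itemize}
\item $j_1=+(L-J)$, $m_1=r$, with $m_1'=-(L-J)-1$ for $l'\ge 0$ and $m_1'=L-J$ for $l'<0$;
\item $j_2=-J$, $m_2=l+r+\frac12$, $m_2'=l'\mp(L-J)\mp\frac12$ (upper sign for $l'\ge0$).
\end{itemize}
Since $\deg(-J)=J+\frac12$, the triangle condition cuts off at exactly $L+\frac12$ for every $J\le L\le 2J$. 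The selected component of $\psi^{-J}$ has a nonzero prefactor, and Lemma~\ref{lem:CGnonvanish} applies to both coefficients.
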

This theorem should be compared to Theorem \ref{thm:RJdim} for the Peter-Weyl truncation. Parts of the proof of Theorem \ref{thm:SJdim} are similar to that of Theorem \ref{thm:RJdim}. In particular, proving that the matrices $\spintrunc{jmm'}$ are linearly independent for $jmm' \in \II(2J + \frac{1}{2})$ is essentially proven by reducing to the Peter-Weyl case, and the fact that $\spintrunc{jmm'} = 0$ for $j > 2J + 1$ follows the same logic as well. The new part of the proof is that for $j = 2J + 1$ we have $\spintrunc{jmm'} = 0$. This is a priori surprising, since $\psi^{-J}_{m,m'}$ involves matrix coefficients of $\pi_{J+\frac{1}{2}}$, and thus $\langle f^{2J+1}_{m,m'} \psi^{-J}_{m_1,m_1'}, \psi^{-J}_{m_2, m_2'} \rangle$ could potentially yield an expression with nonzero CGCs. However, it turns out that the relevant CGCs always cancel each other out in this scenario, which we will prove first.
\begin{lemma}[\cite{Varshalovich1988} Eq. 8.5.3.]
    For $j_1, j_2 \in \frac{1}{2}\N$ and $\jrel{m_1}{j_1}, \; \jrel{m_2}{j_2}$, the CGC corresponding to $j_1 + j_2$ is given explicitly by
    \begin{equation}\label{eq:CGj1plusj2}
        C^{j_1, j_2, j_1+j_2}_{m_1, m_2, m_1+m_2} = \sqrt{\frac{(2j_1)!(2j_2)! (j_1+j_2+m_1+m_2)!(j_1+j_2-m_1-m_2)!}{(2j_1+2j_2)!(j_1+m_1)!(j_1-m_1)!(j_1-m_1)!(j_2+m_2)!(j_2-m_2)!}}.
    \end{equation}
\end{lemma}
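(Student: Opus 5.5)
The plan is to construct the top component $V_{j_1+j_2} \subseteq V_{j_1} \tensor V_{j_2}$ explicitly from its highest weight vector and read off the coefficients. I will also point out a misprint: the factor $(j_1-m_1)!$ appears twice in the denominator of Equation \eqref{eq:CGj1plusj2}. The intended formula has $(j_1+m_1)!(j_1-m_1)!(j_2+m_2)!(j_2-m_2)!$ in the denominator, as in \cite{Varshalovich1988}, and this is the version I will prove.

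Write $J' = j_1 + j_2$ and $M = m_1+m_2$. The vector $e_{j_1} \tensor e_{j_2}$ is the unique (up to scalar) vector of weight $J'$ in $V_{j_1}\tensor V_{j_2}$. It is therefore the highest weight vector of the summand $V_{J'}$. In the standard phase convention $C^{j_1,j_2}$ sends it to $e_{J'}$, i.e. $C^{j_1,j_2,J'}_{j_1,j_2,J'} = 1$.

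Let $J_-$ be the lowering operator. It acts by $J_- e_m = \sqrt{(j+m)(j-m+1)}\, e_{m-1}$ on $V_j$ and as $J_-\tensor 1 + 1 \tensor J_-$ on the tensor product, and it commutes with the intertwiner $C^{j_1,j_2}$. Iterating the one-step formula on $V_j$ and telescoping the product gives
\begin{equation*}
J_-^a e_j = \sqrt{\frac{(2j)!\, a!}{(2j-a)!}}\; e_{j-a}.
\end{equation*}
Since the two summands of $J_-$ on the tensor product commute, the binomial theorem gives, with $k = J'-M$,
\begin{equation*}
J_-^k (e_{j_1}\tensor e_{j_2}) = \sum_{a+b=k} \frac{k!}{a!\,b!} \sqrt{\frac{(2j_1)!\,a!}{(2j_1-a)!}}\sqrt{\frac{(2j_2)!\,b!}{(2j_2-b)!}}\; e_{j_1-a}\tensor e_{j_2-b}.
\end{equation*}
On the other side, $J_-^k e_{J'} = \sqrt{(2J')!\,k!/(2J'-k)!}\; e_{M}$.

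Applying $C^{j_1,j_2}$ and comparing the two expressions shows that $C^{j_1,j_2,J'}_{m_1,m_2,M}$ equals the ratio of the coefficient of $e_{m_1}\tensor e_{m_2}$ to $\sqrt{(2J')!\,k!/(2J'-k)!}$. Here we substitute $a = j_1-m_1$, $b=j_2-m_2$, $2j_1-a = j_1+m_1$, $2j_2-b=j_2+m_2$ and $2J'-k = J'+M$. The remaining step is bookkeeping of factorials: combine $\frac{k!}{a!b!}$ with the square roots, cancel $\sqrt{a!\,b!}$ and $\sqrt{k!}$ appropriately, and arrive at
\begin{equation*}
\sqrt{\frac{(2j_1)!(2j_2)!(J'+M)!(J'-M)!}{(2J')!(j_1+m_1)!(j_1-m_1)!(j_2+m_2)!(j_2-m_2)!}},
\end{equation*}
which is the claimed formula.

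No step is a genuine obstacle. The only point needing care is the phase convention: I will confirm that the real-valued convention from \cite{Koornwinder1981,Varshalovich1988} fixes $C^{j_1,j_2,J'}_{j_1,j_2,J'}>0$. Given that, the positivity of all the lowering coefficients makes every stretched CGC positive, consistent with the square root.
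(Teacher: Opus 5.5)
The paper gives no proof of this lemma; it only cites Eq.~8.5.3 of \cite{Varshalovich1988}. Your highest-weight/lowering-operator derivation is correct and self-contained. The telescoped formula for $J_-^a e_j$ is right, and so is the binomial expansion. The factorial bookkeeping gives $\sqrt{k!\,(2j_1)!\,(2j_2)!\,(2J'-k)!/\bigl(a!\,b!\,(2j_1-a)!\,(2j_2-b)!\,(2J')!\bigr)}$ before substitution, which becomes the stated formula after substituting.

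You are also right about the misprint. With the duplicated $(j_1-m_1)!$ one would get, for example, $C^{1,0,1}_{-1,0,-1}=1/\sqrt{2}$ instead of $1$. The paper itself uses the version with a single $(j_1-m_1)!$ in the proof of Lemma~\ref{lem:CGDiffzero}.

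Compared with the bare citation, your route makes explicit which conventions the formula depends on:
\begin{itemize}
\item Condon--Shortley phases for the bases $e_m$ of every $V_j$, so that $J_-$ has nonnegative matrix entries;
\item the normalization $C^{j_1,j_2,j_1+j_2}_{j_1,j_2,j_1+j_2}=1$.
\end{itemize}
It also directly gives the strict positivity of stretched CGCs that Lemma~\ref{lem:Mlempropnr} relies on.

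The one step worth another sentence is ``comparing the two expressions.'' What you actually compute is
\[
(C^{j_1,j_2})^{-1}e_M=\sqrt{(2J'-k)!/((2J')!\,k!)}\;J_-^k(e_{j_1}\tensor e_{j_2}).
\]
Identifying its coefficients with $\langle C^{j_1,j_2}(e_{m_1}\tensor e_{m_2}),e_M\rangle$ uses $(C^{j_1,j_2})^{-1}=(C^{j_1,j_2})^{*}$, i.e.\ unitarity of the intertwiner. This is implicit in the paper's use of \eqref{eq:matcoeffprod}. Since the coefficients are positive reals, the complex conjugation in the inner product causes no trouble.
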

\begin{lemma}\label{lem:CGDiffzero}
    The following identity of CGCs holds for all $j \in \frac{1}{2}\N$, $\jrel{m_1}{j}$ and $\jrel{m_2}{j}$
    \begin{equation*}
        \sqrt{j-m_1} \sqrt{j+m_2} C^{j,j,2j}_{m_1,m_2, m_1 + m_2} - \sqrt{j+m_1 + 1}\sqrt{j-m_2 + 1} C^{j,j,2j}_{m_1+1,m_2-1, m_1+m_2} = 0.
    \end{equation*}
\end{lemma}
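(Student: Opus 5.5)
The plan is a direct computation from the closed formula \eqref{eq:CGj1plusj2}, specialised to $j_1 = j_2 = j$. Both CGCs in the identity have third label $2j = j_1 + j_2$ and the same total weight $M := m_1 + m_2$, so both fall under that formula. The difference should then vanish because the two terms are equal after simplifying the factorials.

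I will use \eqref{eq:CGj1plusj2} in its standard (corrected) form from \cite{Varshalovich1988}:
\begin{equation*}
C^{j_1,j_2,j_1+j_2}_{m_1,m_2,m_1+m_2} = \sqrt{\frac{(2j_1)!(2j_2)!(j_1+j_2+M)!(j_1+j_2-M)!}{(2j_1+2j_2)!(j_1+m_1)!(j_1-m_1)!(j_2+m_2)!(j_2-m_2)!}}.
\end{equation*}
The denominator should contain $(j_2+m_2)!(j_2-m_2)!$; the repeated $(j_1-m_1)!$ in the displayed version is a typo.

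First I dispose of the boundary cases. If $m_1 = j$, then the weight $m_1+1$ is not eligible. By Proposition \ref{prop:CGcond} the second CGC vanishes, and the first term carries the factor $\sqrt{j-m_1} = 0$. The case $m_2 = -j$ is identical, with the factor $\sqrt{j+m_2} = 0$. So in both cases both terms vanish.

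Now assume $m_1 < j$ and $m_2 > -j$, so that all weights in the second CGC are eligible. Set
\begin{equation*}
A := \sqrt{\frac{((2j)!)^2 (2j+M)!(2j-M)!}{(4j)!}},
\end{equation*}
which is the part of the formula common to both CGCs, since $M$ is unchanged.

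For the first term, I absorb the prefactors into the factorials using $\sqrt{n}/\sqrt{n!} = 1/\sqrt{(n-1)!}$ with $n = j-m_1$ and $n = j+m_2$. This gives
\begin{equation*}
\sqrt{j-m_1}\sqrt{j+m_2}\, C^{j,j,2j}_{m_1,m_2,M} = \frac{A}{\sqrt{(j+m_1)!(j-m_1-1)!(j+m_2-1)!(j-m_2)!}}.
\end{equation*}

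For the second term, the CGC has denominator $(j+m_1+1)!\,(j-m_1-1)!\,(j+m_2-1)!\,(j-m_2+1)!$. The prefactor $\sqrt{j+m_1+1}\sqrt{j-m_2+1}$ reduces $(j+m_1+1)!$ to $(j+m_1)!$ and $(j-m_2+1)!$ to $(j-m_2)!$. The result is exactly the same expression as for the first term, so the difference is zero.

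There is no real obstacle here. The only points requiring care are using the correct form of \eqref{eq:CGj1plusj2} and treating the boundary weights separately, since there the formula is applied outside its range and one must instead invoke vanishing by Proposition \ref{prop:CGcond}. The real-phase convention of the paper makes all CGCs of this extremal type positive, so no sign issues arise.
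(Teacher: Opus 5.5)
Your proposal is correct and follows the paper's own argument: both terms reduce, via the closed formula for $C^{j,j,2j}$, to the same expression with denominator $(4j)!(j+m_1)!(j-m_1-1)!(j+m_2-1)!(j-m_2)!$. You add two things the paper glosses over: a separate treatment of the boundary weights $m_1 = j$ and $m_2 = -j$, where the paper's computation implicitly involves $(-1)!$, and the correction of the duplicated $(j_1-m_1)!$ in the displayed formula.
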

\begin{proof}
    We use Equation \eqref{eq:CGj1plusj2}, from which the result follows by inspection. We know that
    \begin{align*}
    &\sqrt{j-m_1} \sqrt{j+m_2} C^{j,j,2j}_{m_1,m_2, m_1 + m_2}\\
    &=\sqrt{j-m_1} \sqrt{j+m_2} \sqrt{\frac{(2j)!(2j)!(2j+m_1+m_2)!(2j-m_1-m_2)!}{(4j)!(j+m_1)!(j-m_1)!(j+m_2)!(j-m_2)!}} \\
    &=\sqrt{\frac{(2j)!(2j)!(2j+m_1+m_2)!(2j-m_1-m_2)!}{(4j)!(j+m_1)!(j-m_1-1)!(j+m_2-1)!(j-m_2)!}}.
    \end{align*}
    Similarly,
    \begin{align*}
        &\sqrt{j+m_1 + 1}\sqrt{j-m_2 + 1} C^{j,j,2j}_{m_1+1,m_2-1, m_1+m_2} \\
        &= \sqrt{j+m_1 + 1}\sqrt{j-m_2 + 1}\sqrt{\frac{(2j)!(2j)!(2j+m_1+m_2)!(2j-m_1-m_2)!}{(4j)!(j+m_1+1)!(j-m_1-1)!(j+m_2-1)!(j-m_2+1)!}} \\
        &= \sqrt{\frac{(2j)!(2j)!(2j+m_1+m_2)!(2j-m_1-m_2)!}{(4j)!(j+m_1)!(j-m_1-1)!(j+m_2-1)!(j-m_2)!}}
    \end{align*}
    proving the claim.
\end{proof}
This implies the following lemma.
\begin{lemma}\label{lem:2Jplus1vanish}
    For $\jrel{m}{2J+1}$, and $\jrel{m'}{2J+1}$, it holds that $\spintrunc{2J+1,m,m'} = 0$ in $S_J$.
\end{lemma}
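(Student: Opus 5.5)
We must show that every matrix entry $\langle f^{2J+1}_{m,m'}\psi^{j_1}_{m_1,m_1'},\psi^{j_2}_{m_2,m_2'}\rangle$ vanishes for $j_1m_1m_1', j_2m_2m_2' \in \II_S(J)$. The plan is to first use the degree selection rule (the lemma preceding Table \ref{tab:magneticmomentsadd}) to discard almost all pairs, and then handle the single surviving case with Lemma \ref{lem:CGDiffzero}.

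\textbf{Step 1: reduction by degrees.} The degree rule requires $\deg(j_2) \geq |2J+1-\deg(j_1)| = 2J+1-\deg(j_1)$. Since all degrees are at most $J+\frac12$, this forces $\deg(j_1)+\deg(j_2)\geq 2J+1$, hence $\deg(j_1)=\deg(j_2)=J+\frac12$. So both spinors must be negative spinors $\psi^{-J}$, and every other entry is zero. It also follows that $2J+1=\deg(j_1)+\deg(j_2)$, so only the top Clebsch–Gordan coupling $C^{J^+,J^+,2J+1}$ enters. In the Peter–Weyl language, the only contribution is the coefficient of $f^{2J+1}$ in a product of two $f^{J+\frac12}$'s.

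\textbf{Step 2: expanding the surviving entries.} Write out $\psi^{-J}_{m_1,m_1'}$ and $\psi^{-J}_{m_2,m_2'}$ componentwise using Theorem \ref{thm:eigenspinors}. Then compute the inner product using Equation \eqref{eq:matcoeffprod} for each spinor component, keeping only the $j=2J+1$ term. The first index ($m$) produces a common factor $C^{2J+1,J^+,J^+}_{m,m_1^-,m_2^-}$, or its transposed form, which factors out of both components. The second indices contribute a sum of two terms:
\[
\sqrt{J+1+m_1'}\sqrt{J+1+m_2'}\,C_{m',m_1'^+,m_2'^+}+\sqrt{J+1-m_1'}\sqrt{J+1-m_2'}\,C_{m',m_1'^-,m_2'^-},
\]
with the top CGCs coupling $(2J+1)$ and $J^+$ to $J^+$. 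The task is to show that this bracket vanishes.

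\textbf{Step 3: converting to the form of Lemma \ref{lem:CGDiffzero}.} This is the main obstacle. Apply the symmetry \eqref{eq:CGsym} to rewrite each coefficient $C^{2J+1,J^+,J^+}_{m',a,b}$ as a multiple of $C^{J^+,J^+,2J+1}_{-a,b,m'}$. The prefactor $(-1)^{J^++a}$ takes opposite signs for $a=m_1'^+$ and $a=m_1'^-$, since these differ by $1$. This sign flip turns the sum into a difference. Setting $j=J^+$, $m_1=-m_1'-\frac12$ and $m_2=m_2'+\frac12$, or a similar relabeling fixed by the weight constraint $m_2'=m'+m_1'$, the square-root factors should match $\sqrt{j-m_1}\sqrt{j+m_2}$ and $\sqrt{j+m_1+1}\sqrt{j-m_2+1}$. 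Lemma \ref{lem:CGDiffzero} then gives zero.

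The delicate part is bookkeeping: checking which symmetry variant to use, tracking the signs, and confirming that the index shifts line up exactly with the lemma. If the first attempt misaligns, I would instead apply the symmetry to the second slot, or use the $m\leftrightarrow -m$ symmetry, and adjust. Boundary cases, where a square root vanishes or an index falls out of range, also need checking, but these make the corresponding terms vanish trivially.
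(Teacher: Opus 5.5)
Your proposal is correct and follows essentially the same route as the paper. Degree counting forces $j_1=j_2=-J$, the common factor $C^{2J+1,J^+,J^+}_{m,m_1^-,m_2^-}$ is pulled out, and the symmetry \eqref{eq:CGsym} turns the bracket into a difference that vanishes by Lemma \ref{lem:CGDiffzero}. Your first relabeling $j=J^+$, $m_1=-m_1'-\tfrac12$, $m_2=m_2'+\tfrac12$ already matches that lemma exactly, and all indices stay in range because $|m_1'|,|m_2'|\le J$; so no fallback or boundary analysis is needed, and when $m'\neq m_2'-m_1'$ both Clebsch--Gordan coefficients vanish outright.
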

\begin{proof}
    We verify that $\langle f^{2J+1}_{m,m'} \psi^{j_1}_{m_1,m_1'}, \psi^{j_2}_{m_2, m_2'} \rangle = 0$ for all $j_1 m_1 m_1', \; j_2 m_2 m_2' \in \II_S(J)$. Since the degrees of $\psi^{j_1}_{m_1,m_1'}$ and $\psi^{j_2}_{m_2, m_2'}$ are at most $J + \frac{1}{2}$, achieved for $j_1 = j_2 = -J$, and since these degrees must add up to at least $2J + 1$, we are left with $j_1 = j_2 = -J$ as the only possibility for this inner product to be nonzero. Calculating the inner product gives
    \begin{align*}
        &\langle f^{2J+1}_{m,m'} \psi^{-J}_{m_1, m_1'}, \psi^{-J}_{m_2,m_2'} \rangle \\
        &= \frac{1}{2J+2}\left \langle f^{2J+1}_{m,m'} \begin{pmatrix}
        \sqrt{J+1+m_1'} f^{J^+}_{m_1^-, m_1'^+} \\
        \sqrt{J+1-m_1'} f^{J^+}_{m_1^-, m_1'^-}
        \end{pmatrix}, \begin{pmatrix}
        \sqrt{J+1+m_2'} f^{J^+}_{m_2^-, m_2'^+} \\
        \sqrt{J+1-m_2'} f^{J^+}_{m_2^-, m_2'^-}
        \end{pmatrix}\right \rangle \\
        &= \frac{\sqrt{4J+3}}{2J+1} C^{2J+1, J^+, J^+}_{m, m_1^-, m_2^-} \bigg (\sqrt{J+1+m_1'} \sqrt{J+1+m_2'} C^{2J+1, J^+, J^+}_{m', m_1'^+, m_2'^+}\\
        & \phantom{\frac{\sqrt{4J+3}}{2J+1} C^{2J+1, J^+, J^+}_{m, m_1^-, m_2^-} =}
        + \sqrt{J+1 - m_1'} \sqrt{J+1 - m_2'} C^{2J + 1, J^+, J^+}_{m', m_1'^-, m_2'^-} \bigg )
    \end{align*}
    where we factored out the common CGC. To show that this is zero, it suffices to show that the term inside the brackets always vanishes.
    For this, we use the symmetry of CGCs given in Equation \eqref{eq:CGsym}, from which we find that
    \begin{align*}
        &\bigg (\sqrt{J+1+m_1'} \sqrt{J+1+m_2'} C^{2J+1, J^+, J^+}_{m', m_1'^+, m_2'^+}
        + \sqrt{J+1 - m_1'} \sqrt{J+1 - m_2'} C^{2J + 1, J^+, J^+}_{m', m_1'^-, m_2'^-} \bigg ) \\
        &= K \bigg ( \sqrt{J+1+m_1'} \sqrt{J+1+m_2'} C^{J^+, J^+, 2J +1}_{-m_1'^+, m_2'^+, m'} - \sqrt{J+1 - m_1'} \sqrt{J+1 - m_2'}  C^{J^+, J^+, 2J + 1}_{-m_1'^-, m_2'^-, m'}\bigg)
    \end{align*}
    where $K = (-1)^{J + m_1' + 1} \sqrt{\frac{2J+2}{4J+3}}$. The minus sign in the equation appears since the first term gets a $(-1)^{J+m_1' + 1}$ factor, while the second term gets a $(-1)^{J+m_1'}$ factor instead. The total is certainly always zero in the case $m' \neq m_2' - m_1'$, since then the individual CGCs vanish. In the case that $m' = m_2' - m_1 '$ we can apply Lemma \ref{lem:CGDiffzero} to conclude that the term between brackets is zero as well, proving the claim.
\end{proof}
Now we are ready to prove the main theorem.
\begin{proof}[Proof of Theorem \ref{thm:SJdim}]
    First, notice that for $j > 2J + \frac{1}{2}$ and $\jrel{m}{j}, \jrel{m'}{j}$, it holds that $\spintrunc{jmm'} = 0$. Indeed, the case $j = 2J + 1$ is given by Lemma \ref{lem:2Jplus1vanish}. For $j > 2J + 1$, we can count degrees. Since the maximal degree of a spinor $\psi^{j_1}_{m_1, m_1'}$ with $j_1 m_1 m_1' \in \II_S(J)$ is $J + \frac{1}{2}$, and since an inner product $\langle f^{j}_{m,m'} \psi^{j_1}_{m_1, m_1'}, \psi^{j_2}_{m_2, m_2'} \rangle$ with $j > \deg(j_1) + \deg(j_2)$ must vanish, we conclude that $\spintrunc{jmm'} = 0$ for $j > 2J + 1$ as well. \\
    \\  
    We aim to prove that all matrices $\spintrunc{jmm'}$ are linearly independent for $jmm' \in \II(2J+\frac{1}{2})$. That is, for $\{\lambda_{jmm'}\}_{jmm' \in \II(2J+\frac{1}{2})} \subset \C$ with
    \begin{equation*}
        \sum_{jmm \in \II(2J+\frac{1}{2})} \lambda_{jmm'} \spintrunc{jmm'} = 0
    \end{equation*}
    we have that $\lambda_{jmm'} = 0$ for all $jmm' \in \II(2J + \frac{1}{2})$. At coordinates $j_1 m_1 m_1', j_2 m_2 m_2' \in \II_S(J)$ this gives
    \begin{equation*}
        \sum_{jmm \in \II(2J+\frac{1}{2})} \lambda_{jmm'} \langle f^{j}_{m,m'} \psi^{j_1}_{m_1, m_1'}, \psi^{j_2}_{m_2, m_2'} \rangle = 0        
    \end{equation*}
    similarly to the proof of Theorem \ref{thm:RJdim}.\\
    \\
    First we show that $\lambda_{jmm'} = 0$ for $jmm' \in \II(J)$. Fix $j_3 m_3 m_3' \in \II(J)$ and consider the coordinates $j_1 m_1 m_1' = +000$ and $j_2 m_2 m_2' = +j_3 m_3 m_3'$. Then by Lemma \ref{lem:maxspinorsinprod} we find
    \begin{align*}
        0 &=\sum_{jmm' \in \II(2J+\frac{1}{2})} \lambda_{jmm'} \langle f^{j}_{m,m'} \psi^{+0}_{0, 0}, \psi^{+j_3}_{m_3, m_3'} \rangle \\
        &= \sum_{jmm' \in \II(2J+\frac{1}{2})} \lambda_{jmm'}\frac{1}{\sqrt{2j_3+1}} \left \langle f^{j}_{m,m'} \begin{pmatrix}
        0 \\ 1
        \end{pmatrix}, \begin{pmatrix}
        -\sqrt{j_3-m_3'} f^{j_3}_{m_3, m_3' + 1} \\ \sqrt{j_3+1+m_3'} f^{j_3}_{m_3, m_3'}
        \end{pmatrix} \right \rangle \\
        &= \lambda_{j_3m_3m_3'} \frac{\sqrt{j_3+1+m_3'}}{\sqrt{2j_3+1}} 
    \end{align*}
    using orthonormality of matrix coefficients. Since $\jrel{m_3'}{j_3}$ it holds that $\frac{\sqrt{j_3+1+m_3'}}{\sqrt{2j_3+1}} \neq 0$ so that necessarily $\lambda_{j_3 m_3 m_3'} = 0$.\\
    \\
    Now we prove that all other $\lambda_{jmm'}$ vanish as well by an induction argument. Let $L \in \frac{1}{2}\N$ with $J \leq L \leq 2J$. Suppose that we have proven that $\lambda_{jmm'} = 0$ for all $jmm' \in \II(L)$. Then we will show that in fact $\lambda_{jmm'} = 0$ for all $jmm' \in \II(L+\frac{1}{2})$, i.e. $\lambda_{L + \frac{1}{2}, m, m'} = 0$ for all $\jrel{m}{L+\frac{1}{2}}$ and $\jrel{m'}{L+\frac{1}{2}}$. To this end, fix some $m_3, m_3'$ with $\jrel{m_3}{L+\frac{1}{2}}$ and $\jrel{m_3'}{L+\frac{1}{2}}$. We will distinguish the cases $m_3' \geq 0$ and $m_3' < 0$.\\
    \\
    Suppose $m_3' \geq 0$. Define $r = -\sign(m_3) (L-J)$ and consider the coordinates $j_1  = +(L-J), \; m_1 = r, \; m_1' = -(L-J) - 1$ and $j_2 = -J, \; m_2 = m_3 + r + \frac{1}{2},\;  m_2' = m_3' - (L-J) - \frac{1}{2}$. We first show that these are indeed valid coordinates in $\II_S(J)$. \\
    \\
    For $j_1m_1m_1'$, we know that $J \leq L \leq 2J$ so $0 \leq L-J \leq J$ as well, and certainly $\jrelplus{r,-(L-J)-1}{L-J}$, hence $j_1 m_1 m_1' \in \II_S(J)$. For $j_2 m_2 m_2'$, notice that $\jrel{m_3 + r}{J+\frac{1}{2}}$ since $\jrel{m_3}{L+\frac{1}{2}}$, $\jrel{r}{L-J}$, and $m_3$ has the opposite sign of $r$. Then for $m_2 = m_3 + r + \frac{1}{2}$ it holds that $-J \leq m_2 \leq J + 1$. A similar argument works for $m_2'$ using $m_3' \geq 0$, so that we indeed find that $j_2 m_2 m_2'$ is an appropriate coordinate in $\II_S(J)$.\\
    \\
    With this, we can compute 
    \begin{align*}
        \langle f^{j}_{m,m'} \psi^{j_1}_{m_1, m_1'}, \psi^{j_2}_{m_2, m_2'} \rangle &= K\left \langle f^{j}_{m,m'} \begin{pmatrix}
            f^{L-J}_{r, -(L-J)} \\ 0
        \end{pmatrix}, \begin{pmatrix}
        \sqrt{J+\frac{1}{2} + m_3' - (L-J)} f^{J+\frac{1}{2}}_{m_3 + r, m_3' - (L-J)} \\
        \sqrt{J+\frac{3}{2} - m_3' + (L-J)} f^{J+\frac{1}{2}}_{m_3 + r, m_3' - (L-J)-1}
        \end{pmatrix}\right \rangle \\
        &= K' C^{j,L-J,J+\frac{1}{2}}_{m,r,m_3+r} C^{j,L-J,J+\frac{1}{2}}_{m', -(L-J), m_3' - (L-J)}
    \end{align*}
    where $K$ and $K'$ are nonzero constants, explicitly
    $K' = -\frac{\sqrt{2j+1}}{2J+2} \sqrt{2J + \frac{1}{2} + m_3' - L}$.
    For the CGCs to be nonvanishing, we certainly need $m = m_3$ and $m' = m_3'$ by the addition of weights condition. Furthermore, we require $J + \frac{1}{2} \in \{|j-(L-J)|, \ldots, j + L - J\}$, which implies that $j \leq L + \frac{1}{2}$ since otherwise $j-L+J > J + \frac{1}{2}$. For $j = L + \frac{1}{2}$ we know that $C^{L+\frac{1}{2},L-J,J+\frac{1}{2}}_{m_3,r,m_3+r} C^{L+\frac{1}{2},L-J,J+\frac{1}{2}}_{m_3', -(L-J), m_3' - (L-J)} \neq 0$ by Lemma \ref{lem:CGnonvanish}, so also $\spintrunc{L+\frac{1}{2}, m_3, m_3'}^{j_1 m_1 m_1'}_{j_2 m_2 m_2'} \neq 0$. Thus, for this choice of $j_1 m_1 m_1', j_2 m_2 m_2'$ it holds that
    \begin{equation*}
            \sum_{jmm \in \II(2J+\frac{1}{2})} \lambda_{jmm'} \spintrunc{jmm'}^{j_1 m_1 m_1'}_{j_2 m_2 m_2'} = \sum_{j = \sigma}^{L+\frac{1}{2}} \lambda_{jm_3m_3'} \spintrunc{jm_3m_3'}^{j_1 m_1 m_1'}_{j_2 m_2 m_2'} = 0
    \end{equation*}
    where the summation takes integer steps starting at $\sigma = 0$ or $\sigma = \frac{1}{2}$ depending on the parity of $L + \frac{1}{2}$. However, by hypothesis $\lambda_{jm_3 m_3'} = 0$ for all $j \leq L$, so this reduces to $$\lambda_{L+\frac{1}{2}, m_3, m_3'} \spintrunc{L+\frac{1}{2}, m_3,m_3'}^{j_1 m_1 m_1'}_{j_2 m_2 m_2'} = 0.$$ But then necessarily $\lambda_{L+\frac{1}{2}, m_3, m_3'} = 0$ as desired.\\
    \\
    Now suppose that $m_3' < 0$. With $r = -\sign(m_3)(L-J)$ we take $j_1 = L - J$, $m_1 = r$ and $m_1' = L-J$ instead, together with $j_2 = -J$, $m_2 = m_3 + r + \frac{1}{2}$ and $m_2' = m_3' + L-J + \frac{1}{2}$. Like before, these are valid coordinates in $\II_S(J)$, using the same kind of reasoning as in the case $m_3 \geq 0$. Then we can compute for arbitrary $jmm'$ that
    \begin{align*}
        \langle f^{j}_{m,m'} \psi^{j_1}_{m_1, m_1'}, \psi^{j_2}_{m_2, m_2'} \rangle &= K\left \langle f^{j}_{m,m'} \begin{pmatrix}
            0 \\ f^{L-J}_{r, L-J}
        \end{pmatrix}, \begin{pmatrix}
        \sqrt{ m_3' + L + \frac{3}{2}} f^{J+\frac{1}{2}}_{m_3 +r, m_3' + L-J+1} \\
        \sqrt{2J - m_3 - L + \frac{1}{2}} f^{J+\frac{1}{2}}_{m_3 +r, m_3' + L-J}
        \end{pmatrix}\right \rangle  \\
        &= K' C^{j,L-J,J+\frac{1}{2}}_{m,r,m_3+r} C^{j,L-J,J+\frac{1}{2}}_{m',L-J,m_3' + L - J}
    \end{align*}
    where $K$ and $K'$ are nonzero constants. Analogously to the case $m_3' \geq 0$, we need that $m = m_3$, $m' = m_3'$ and $j \leq L + \frac{1}{2}$ for the CGCs to be nonvanishing, and for $j = L + \frac{1}{2}$ it holds that $ C^{j,L-J,J+\frac{1}{2}}_{m_3,r,m_3+r} C^{j,L-J,J+\frac{1}{2}}_{m_3',L-J,m_3' + L - J} \neq 0$ by Lemma \ref{lem:CGnonvanish}, hence $\spintrunc{L+\frac{1}{2}, m_3, m_3'}^{j_1 m_1 m_1'}_{j_2 m_2 m_2'} \neq 0$. Thus, for this choice of $j_1 m_1 m_1', j_2 m_2 m_2'$ it holds that
    \begin{equation*}
            \sum_{jmm \in \II(2J+\frac{1}{2})} \lambda_{jmm'} \spintrunc{jmm'}^{j_1 m_1 m_1'}_{j_2 m_2 m_2'} = \sum_{j = \sigma}^{L+\frac{1}{2}} \lambda_{jm_3m_3'} \spintrunc{jm_3m_3'}^{j_1 m_1 m_1'}_{j_2 m_2 m_2'} = 0
    \end{equation*}
    as before. Using the hypothesis $\lambda_{jm_3 m_3'} = 0$ for all $j \leq L$, we find $\lambda_{L+\frac{1}{2}, m_3, m_3'} = 0$.
    \end{proof}

\setcounter{subsection}{0}
\subsection{$C^*$-envelope and propagation number}
\setcounter{subsection}{8}
As with the Peter-Weyl truncation $R_J$, we are interested in the propagation number of $S_J$ as an operator system invariant. It turns out that the propagation number of $S_J$ is actually strictly larger than that of $R_J$ for small values of $J$. This should be contrasted with the case of the $d$-dimensional torus $\mathbb{T}^d$, studied in \cite{Leimbach24}. There, the propagation number of the spectrally truncated operator system is equal to $2$, which is the same for the Peter-Weyl truncation. The underlying reason for this is that the Dirac operator on the torus respects the matrix coefficients of irreducible representations, while the Dirac operator on $\SU(2)$ does not. More precisely, on the torus, the matrix coefficients are $e^{in \cdot x} \in C(\mathbb{T}^d)$ for $n \in \Z^d$, $x \in \mathbb{T}^d$, and the spectral truncation at level $\Lambda$ gives 
\begin{equation*}
    P_\Lambda L^2(S_{\mathbb{T}^d}) = \bigoplus_{|n| \leq \Lambda} W_n \tensor \C^{2^{\lfloor \frac{d}{2} \rfloor}} \cong \left (\bigoplus_{|n| \leq \Lambda} W_n \right ) \tensor \C^{2^{\lfloor \frac{d}{2} \rfloor}}
\end{equation*}
where $W_n = \spann \{e^{in \cdot x}\}$ for an $n \in \Z^d$. However, the part within brackets on the right-hand side is precisely the Peter-Weyl truncation. In the case of $\SU(2)$ the spectral truncation at level $J$ gives
\begin{equation*}
    P_J L^2(S_{\SU(2)}) =\bigoplus_{j \leq J} \left \{ V_{j} \tensor V_{j+\frac{1}{2}} \tensor \Hom_G(V_{j} \tensor V_{j+\frac{1}{2}}, V_{\frac{1}{2}}) \right \} \oplus \left \{ V_{j+\frac{1}{2}} \tensor V_{j} \tensor \Hom_G(V_{j+\frac{1}{2}} \tensor V_{j}, V_{\frac{1}{2}}) \right \}
\end{equation*}
which is not rewritable in terms of the Peter-Weyl truncation of $\SU(2)$. As such, the propagation numbers of $S_J$ and $R_J$ are more difficult to relate.\\
\\
The proof of Theorem \ref{thm:RJpropnr} in terms of kernels does not easily carry over to the vector-valued situation, as the kernels are then matrix-valued. Instead, the proof presented in this section works by writing all matrix units as a linear combination of products of matrices. For this, we fix the following notation used for the remainder of the section.

\begin{definition}
    Let $j_1 m_1 m_1', \; j_2 m_2 m_2' \in \II_S(J)$, where we recall that this means the sign of $j_i$ can be either $+$ or $-$ for $i = 1,2$. With $e^{j_1, m_1, m_1'}_{j_2, m_2, m_2'}$ we denote the matrix satisfying $e^{j_1, m_1, m_1'}_{j_2, m_2, m_2'} \psi^{j_1}_{m_1, m_1'} = \psi^{j_2}_{m_2, m_2'}$ while for $jmm \neq j_1 m_1 m_1'$ it satisfies $e^{j_1, m_1, m_1'}_{j_2, m_2, m_2'} \psi^j_{m,m'} = 0$.
\end{definition}
The main theorem in this section is the following.
\begin{theorem}\label{thm:SJpropnr}
    The $C^*$-envelope of $S_J$ is $M_N(\C)$, where $N = \dim P_J L^2(S)= \frac{2}{3}(2J+1)(2J+2)(2J+3)$. Furthermore, for all $J \in \frac{1}{2}\N$, the propagation number satisfies $2 \leq \prop(S_J) \leq 8$.
\end{theorem}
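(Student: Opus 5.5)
The lower bound $\prop(S_J) \geq 2$ follows by comparing dimensions. By Theorem \ref{thm:SJdim}, $\dim S_J = \frac{1}{6}(4J+2)(4J+3)(8J+5)$, which grows like $\frac{16}{3}\cdot 2J^3$. On the other hand, $N^2 = \frac{4}{9}(2J+1)^2(2J+2)^2(2J+3)^2$ grows like $J^6$. Checking small $J$ directly (for $J=0$: $\dim S_0 = 10$ while $N^2 = 16$) gives $\dim S_J < N^2$ for all $J$. So $S_J$ itself is not $M_N(\C)$. The upper bound and the identification of the $C^*$-envelope both follow once I show $S_J^{\circ 8} = M_N(\C)$. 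Since $S_J^{\circ 8}$ is contained in the $C^*$-algebra generated by $S_J$, this forces $C^*_{env}(S_J) = M_N(\C)$.

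The strategy is to produce every matrix unit $e^{j_1 m_1 m_1'}_{j_2 m_2 m_2'}$ as a linear combination of products of at most $8$ elements of $S_J$. I would do this in three stages.

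\textbf{Stage 1: a rank-one projection from $2$ factors.} Consider the ``extremal'' spinor $\psi^{+0}_{0,0} = (0,1)^T$. I would look for a product of two elements of $S_J$ that equals the projection $e^{+000}_{+000}$, possibly up to a controlled correction. Natural candidates are products of truncated extremal matrix coefficients such as $\spintrunc{j,j,j}$ and its adjoint. These act like shift operators in the weights, by Table \ref{tab:magneticmomentsadd}, and their relevant entries are nonzero by Lemma \ref{lem:CGnonvanish} and Lemma \ref{lem:maxspinorsinprod}. A cleaner alternative is to note that $S_J$ contains, by Theorem \ref{thm:SJdim}, a full $(2J+1)^2$-dimensional family of truncated level-$0$ to $J$ coefficients. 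The weight grading given by the torus $\{e^{i\theta\sigma_1}\}$ acting on the left and on the right splits $M_N(\C)$ into weight spaces, and taking suitable weight-zero combinations isolates diagonal pieces.

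\textbf{Stage 2: a column, then the full matrix algebra.} From the proof of Theorem \ref{thm:SJdim}, the vectors $\spintrunc{jmm'}\psi^{+0}_{0,0}$ span all positive spinors of degree $\leq J$. The negative spinors are reached via coordinates $(+(L-J), \cdot, \cdot) \to (-J, \cdot, \cdot)$. This suggests showing that $S_J\, e^{+000}_{+000}$ spans the full column $\{e^{+000}_{k} : k \in \II_S(J)\}$, perhaps only after one further multiplication by $S_J$ to reach the negative spinors. This would cost $2+2 = 4$ factors. Taking adjoints gives rows with the same number of factors. Then
\[
e^{+000}_{k}\,\bigl(e^{+000}_{l}\bigr)^* = e^{l}_{k},
\]
so every matrix unit is a product of two elements of $S_J^{\circ 4}$, and hence $S_J^{\circ 8} = M_N(\C)$. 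This is the source of the bound $8$.

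\textbf{Main obstacle.} The hardest step is Stage 1: exhibiting a rank-one (or at least explicitly invertible-on-a-corner) element in a low power of $S_J$. The cancellations seen in Lemma \ref{lem:2Jplus1vanish} show that naive products can vanish unexpectedly. My first attempt would be to take a weight-graded element of $S_J^{\circ 2}$ supported near $\psi^{+0}_{0,0}$ and use the finite dimensionality of its weight block. Some polynomial in that element (a spectral projection) lies in the algebra, and I would then bound the degree of the polynomial needed, though such a polynomial may cost additional factors. If that degree cannot be controlled, I would instead argue via the commutant: show that the commutant of $S_J$ is trivial using the vectors $\psi^{+0}_{0,0}$ and the nonvanishing CGCs. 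This yields $C^*_{env}(S_J) = M_N(\C)$ by Burnside's theorem, leaving only the factor count for the bound $8$ to be established by explicit products.
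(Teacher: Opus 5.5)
\textbf{There is a genuine gap: the upper bound $\prop(S_J)\leq 8$ is never actually proved.}

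Your lower bound and your reduction are fine. Once every matrix unit lies in $S_J^{\circ 8}$, the $C^*$-algebra generated by $S_J$ is the simple algebra $M_N(\C)$, hence the envelope. Then $\dim S_J < N^2$ rules out $\prop(S_J)=1$. (Small slip: $\dim S_0 = 5$, not $10$; this does not affect the argument.)

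Stage 1, which you yourself flag as the main obstacle, is left as a search. You exhibit no element of $S_J^{\circ 2}$ equal to $e^{+0,0,0}_{+0,0,0}$. The weight grading only reduces the problem to the weight-$(0,0)$ block, and for $J\geq\frac12$ that block also contains $\psi^{-\frac12}_{\frac12,\frac12}$ (and $\psi^{+1}_{0,0}$ for $J\geq 1$). Both have a component proportional to some $f^{j}_{0,0}$, so nothing in your argument separates them from $\psi^{+0}_{0,0}$. Your fallbacks do not close this. A spectral projection of a polynomial costs an uncontrolled number of factors. A trivial-commutant/Burnside argument, which you also do not carry out, gives at most $C^*(S_J)=M_N(\C)$ with no bound on the propagation number.

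Stage 2 rests on a misreading of the proof of Theorem \ref{thm:SJdim}. The vector $\spintrunc{jmm'}\psi^{+0}_{0,0} = P_J(0,f^j_{m,m'})^T$ is orthogonal to every $\psi^{+j}_{m,-j-1} = -(f^j_{m,-j},0)^T$, so $S_J\psi^{+0}_{0,0}$ never reaches these positive spinors. That proof only pairs with $\psi^{+j_3}_{m_3,m_3'}$ for $\jrel{m_3'}{j_3}$, where $\sqrt{j_3+1+m_3'}\neq 0$. The negative spinors there are reached from different starting vectors $\psi^{+(L-J)}_{r,\cdot}$, which does not help build a single column out of $\psi^{+0}_{0,0}$. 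Moreover, each $P_J(0,f^j_{m,m'})^T$ has components on both $\psi^{+j}_{m,m'}$ and $\psi^{-j^-}_{m^+,m'^+}$, since both spinors contain $f^j_{m,m'}$. You would still have to show these get separated, and ``perhaps after one further multiplication'' is not an argument.

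The paper supplies the two missing ideas.
\begin{enumerate}
\item[(i)] Rank-one elements come from the \emph{top} of the truncation, not the bottom spinor. $\spintrunc{2J+\frac12,2J+\frac12,2J+\frac12}$ has only two nonzero entries, because degrees must add up to $2J+\frac12$ with extremal weights. Left multiplication by $\spintrunc{\frac12,-\frac12,\frac12}$ kills one of them, giving $e^{+J,-J,-J-1}_{+J,J,J}\in S_J^{\circ 2}$. One more factor gives $e^{+J,J,J}_{-J,-J,-J}$ and $e^{+J,-J,-J-1}_{-J,-J,-J}$ in $S_J^{\circ 3}$ (Lemma \ref{lem:propnrsinglemat}).
\item[(ii)] The pairing of $\psi^{+j}_{m,m'}$ with $\psi^{-j^-}_{m^+,m'^+}$ is resolved by multiplying these rank-one elements by single elements $A_i=\spintrunc{J+i,J-m_1,J-m_1'}$ and $B_i=\spintrunc{J+i,-J-m_1,-J-m_1'-1}$. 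This produces rows, not columns, in $S_J^{\circ 4}$, read off from two different anchor spinors $\psi^{+J}_{J,J}$ and $\psi^{+J}_{-J,-J-1}$. The $2\times 2$ determinant of Lemma \ref{lem:Mlempropnr} separates the paired terms, and a downward induction in $j$ strips the higher-degree contributions. The edge cases $m'=j$ and $m'=-j-1$ for positive spinors are handled separately.
\end{enumerate}
Only then does $e^{j_1m_1m_1'}_{j_2m_2m_2'} = (e^{j_2m_2m_2'}_{-J,-J,-J})^*e^{j_1m_1m_1'}_{-J,-J,-J}$ yield the bound $8$.
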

Unlike Theorem \ref{thm:RJpropnr}, the case $J = 0$ is nontrivial, and has a propagation number different from $1$. Since the proof is quite lengthy and contains several lemmas, we first outline the plan of the proof.\\
\\
The hard part of the proof is the upper bound of 8. This is also how we determine that $C^*_{env}(S_J) = M_N(\C)$. Once we have established that this is the $C^*$-envelope, the lower bound of 2 follows from a dimension count. Thus, we focus on determining the upper bound. The main difference between matrices in $S_J$ compared to $R_J$, is that matrices in $S_J$ typically have groups of four nonzero entries. To be more precise, if $\spintrunc{jmm'} \in S_J$ has $\spintrunc{jmm'}^{+j_1 m_1 m_1'}_{+j_2 m_2 m_2'} \neq 0$, then the triple $-j_1^{-}, m_1^+, m_1'^+$ instead of $j_1 m_1 m_1'$ can also yield a nonzero entry, since $\psi^{-j_1^-}_{m_1^+, m_1^+}$ contains the same matrix coefficients as $\psi^{+j_1}_{m_1, m_1'}$, namely $f^{j_1}_{m_1, m_1'}$ and $f^{j_1}_{m_1, m_1'+1}$. This also happens for $-j_2^{-}, m_2^+, m_2'^+$ instead of $j_2 m_2 m_2'$, yielding a total of four cases corresponding to the same $jmm', \; j_1m_1m_1', \; j_2m_2 m_2'$.\\
\\
Globally, the proof of the theorem consists of the following steps:
\begin{itemize}
    \item[Step 1:] To show that $\prop(S_J) \leq 8$, we may equivalently show that for every pair of coordinates $j_1 m_1 m_1', \; j_2 m_2 m_2' \in \II_S(J)$ it holds that $e^{j_1, m_1, m_1'}_{j_2, m_2, m_2'} \in S_J^{\circ 8}$. For this it is enough to show that $e^{j_1, m_1, m_1'}_{-J,-J,-J} \in S_J^{\circ 4}$. 
    \item[Step 2:] Identify matrices in $S_J^{\circ 3}$ with only one nonzero entry. In Lemma \ref{lem:propnrsinglemat} we prove that $$\spintrunc{\tfrac{1}{2}, -\tfrac{1}{2}, \tfrac{1}{2} }
    \spintrunc{2J+ \tfrac{1}{2}, 2J + \tfrac{1}{2}, 2J + \tfrac{1}{2}} \in S_J^{\circ 2}$$ equals $e^{+J, -J, -J-1}_{+J, J, J}$ up to a constant. From this we can multiply with certain matrices to find $e^{+J,J,J}_{-J,-J,-J} \in S^{\circ 3}$ and $e^{+J, -J, -J-1}_{-J, -J, -J} \in S^{\circ 3}$.
    \item[Step 3:] Identify matrices in $S_J$ with nonzero $e^{j_1, m_1, m_1'}_{J, J, J}$ and $e^{j_1, m_1, m_1'}_{J, -J, -J-1}$ contributions. For positive $j_1$, these are $\spintrunc{J+j_1, J-m_1, J-m_1'}$ and $\spintrunc{J+j_1, -J-m_1, -J-m_1' - 1}$ respectively. These matrices also come with $e^{-j_1^{-}, m_1^+, m_1'^+}_{+J,J,J}$ and $e^{-j_1^{-}, m_1^+, m_1'^+}_{+J,-J,-J-1}$ terms each, and more terms corresponding to higher $j$ values.
    \item[Step 4:] Multiply with $e^{+J,J,J}_{-J,-J,-J}$ and $e^{+J,-J,-J-1}_{-J,-J,-J}$. This way, we eliminate unwanted terms, and furthermore convert $e^{j_1,m_1,m_1'}_{J,-J,-J-1}$ and $e^{j_1,m_1,m_1'}_{J, J, J}$ terms to $e^{j_1,m_1,m_1'}_{-J,-J,-J}$ terms, allowing us to cancel them later in linear combinations. These products are in $S_J^{\circ 4}$.
    \item[Step 5:] Use linear independence to separate single terms. From step 4 we have two matrices with both $e^{j_1, m_1, m_1'}_{-J,-J,-J}$ and $e^{-j_1^-, m_1^+, m_1'^+}_{-J,-J,-J}$ terms. The matrices that we use turn out to be linearly independent in a way that allows us to find matrices in $S_J^{\circ 4}$ with an $e^{j_1, m_1, m_1'}_{-J,-J,-J}$ term, but no $e^{-j_1^-, m_1^+, m_1'^+}_{-J,-J,-J}$ term, and vice versa. This is shown in Lemma \ref{lem:Mlempropnr}.
    \item[Step 6:] Get rid of $e^{j, m_1, m_1'}_{-J,-J,-J}$ terms for higher $j$. To accomplish this, we use a recursive structure, eliminating terms from `above' starting at $2J$ or $2J+\frac{1}{2}$ depending on parity. This isolates $e^{j_1, m_1, m_1'}_{-J, -J, -J} \in S_J^{\circ 4}$, proving the claim.
\end{itemize}

Now we start with the precise statements. First, we find matrices with exactly one nonzero entry.
\begin{lemma}\label{lem:propnrsinglemat}
    Let $J \in \frac{1}{2}\N$. Then
    \begin{equation}\label{eq:lempropnr1}
        \spintrunc{2J+\tfrac{1}{2}, -2J - \tfrac{1}{2}, -2J - \tfrac{1}{2}}
        \spintrunc{\tfrac{1}{2}, -\tfrac{1}{2}, \tfrac{1}{2}} \spintrunc{2J + \tfrac{1}{2}, 2J + \tfrac{1}{2}, 2J + \tfrac{1}{2}} = L_1 e^{+J, -J, -J-1}_{-J,-J,-J}
    \end{equation}
    and
    \begin{equation}\label{eq:lempropnr2}
    \spintrunc{\tfrac{1}{2},-\tfrac{1}{2}, \tfrac{1}{2}} \spintrunc{2J+\tfrac{1}{2}, -2J - \tfrac{1}{2}, -2J - \tfrac{1}{2}} \spintrunc{\tfrac{1}{2}, \tfrac{1}{2}, -\tfrac{1}{2}}= L_2 e^{+J, J, J}_{-J,-J,-J}
    \end{equation}
    where $L_1, L_2$ are nonzero constants. In particular, $e^{+J, -J, -J-1}_{-J,-J,-J} \in S_J^{\circ 3}$ and $e^{+J,J,J}_{-J,-J,-J} \in S_J^{\circ 3}$.
\end{lemma}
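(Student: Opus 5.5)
The strategy is to determine, for each factor, exactly which matrix entries are nonzero. We then observe that composing the three factors leaves only one path through the index set $\II_S(J)$. We read products right to left, as operators acting on basis spinors.

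The key observation concerns the extremal matrix $\spintrunc{2J+\tfrac12, 2J+\tfrac12, 2J+\tfrac12}$. By Theorem \ref{thm:SJdim}, $j=2J+\tfrac12$ is the top nonvanishing level. For an entry $\langle f^{2J+1/2}_{m,m'}\psi^{j_1}_{m_1,m_1'},\psi^{j_2}_{m_2,m_2'}\rangle$ to be nonzero, the degree condition forces $\deg(j_1)+\deg(j_2)\ge 2J+\tfrac12$. With degrees at most $J+\tfrac12$, this means $\{j_1,j_2\}=\{-J,-J\}$ or $\{+J,-J\}$.

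The weight addition rules of Table \ref{tab:magneticmomentsadd}, with $m=m'=2J+\tfrac12$ maximal, then pin the weights to extremal values. In the $(-J)\to(-J)$ case the top CGCs appear, and Lemma \ref{lem:2Jplus1vanish}-type reasoning is not needed: the requirement $m+m_1=m_2$ with $m_2\le J+1$ and $m_1\ge -J$ forces $m=2J+1$, which is impossible. So only the mixed sign cases survive. Enumerating these with the table, the positive extremal matrix maps exactly one spinor to one spinor (up to the mixed pair). Its adjoint-type partner $\spintrunc{2J+\tfrac12,-2J-\tfrac12,-2J-\tfrac12}$ does the reverse, with weights negated. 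I would record each of these as at most two matrix units with explicit nonzero coefficients, computed via Lemma \ref{lem:maxspinorsinprod} and Lemma \ref{lem:CGnonvanish}. Nonvanishing is guaranteed because all weights involved are extremal.

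Next, I would treat $\spintrunc{\tfrac12,-\tfrac12,\tfrac12}$ and $\spintrunc{\tfrac12,\tfrac12,-\tfrac12}$. These are sparse, shifting $\deg$ by $\pm\tfrac12$ and the weights by $\mp\tfrac12$ according to the table. Since they are sandwiched between the extremal factors, only their entries between the specific spinors in the range and domain of the outer factors matter. Typical targets are $\psi^{+J}_{J,J}$, $\psi^{+J}_{-J,-J-1}$ and $\psi^{-J}_{-J,-J}$ together with their extremal partners.

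The plan is to check that the composition yields exactly one chain. For \eqref{eq:lempropnr1}, the right factor takes $\psi^{+J}_{-J,-J-1}$ to an extremal negative spinor, the middle factor moves between extremal spinors, and the left factor lands on $\psi^{-J}_{-J,-J}$. All other chains are killed by a weight mismatch. Equation \eqref{eq:lempropnr2} is handled analogously. In each case $L_1$ and $L_2$ are products of three CGC expressions with extremal weights, so they are nonzero by Lemma \ref{lem:CGnonvanish} and \eqref{eq:specialspinors}.

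The main obstacle I expect is the bookkeeping in this enumeration. One must verify that every alternative intermediate spinor, especially the paired ones $\psi^{-j^-}_{m^+,m'^+}$ sharing matrix coefficients with $\psi^{+j}_{m,m'}$, either violates the degree or weight rules or contributes a coefficient that vanishes. The vanishing can come from a factor like $\sqrt{J-m'}$ at $m'=J$, or $\sqrt{J+1+m'}$ at $m'=-J-1$, in the spinor formulas of Theorem \ref{thm:eigenspinors}. I would organise this as a small table of the possible sign patterns $(\pm,\pm)$ for each factor, and intersect the allowed weights.
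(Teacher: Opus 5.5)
Your strategy is the paper's. You write $\spintrunc{2J+\tfrac12,2J+\tfrac12,2J+\tfrac12}$ as a combination of exactly the two matrix units $e^{J,-J,-J-1}_{-J,J+1,J}$ and $e^{-J,-J,-J}_{J,J,J}$, obtain $\spintrunc{2J+\tfrac12,-2J-\tfrac12,-2J-\tfrac12}$ as its adjoint, and follow the single chain $\psi^{+J}_{-J,-J-1}\to\psi^{-J}_{J+1,J}\to\psi^{+J}_{J,J}\to\psi^{-J}_{-J,-J}$, with $\spintrunc{\tfrac12,-\tfrac12,\tfrac12}$ killing $\psi^{+J}_{J,J}$. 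For \eqref{eq:lempropnr2} the paper takes the adjoint of the two-fold product $\spintrunc{\tfrac12,-\tfrac12,\tfrac12}\spintrunc{2J+\tfrac12,2J+\tfrac12,2J+\tfrac12}=K e^{+J,-J,-J-1}_{+J,J,J}$. It then only additionally needs the column of $\spintrunc{\tfrac12,-\tfrac12,\tfrac12}$ at $\psi^{+J}_{-J,-J-1}$. However, your enumeration for the extremal factor, which carries the whole argument, is wrong in two places as written.

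First, your exclusion of the case $j_1=j_2=-J$ fails. From $m+m_1=m_2$ with $m_1\ge -J$ and $m_2\le J+1$ you only get $m\le 2J+1$, which does not rule out $m=2J+\tfrac12$; also, no ``top CGC'' appears there. The real obstruction is integrality. The triangle condition runs in integer steps, so for $j=2J+\tfrac12$ and $\deg(j_1)=J+\tfrac12$ one needs $\deg(j_2)\in\{J,J+1,\dots\}$; that is, the triple $(2J+\tfrac12,J+\tfrac12,J+\tfrac12)$ is not admissible. Equivalently, $m_2-m_1\in\Z$ while $2J+\tfrac12\notin\Z$, or you can use the $m'$-rule, which gives $m'=m_2'-m_1'\le 2J$. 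This is how the paper reduces at once to $\{\deg(j_1),\deg(j_2)\}=\{J,J+\tfrac12\}$.

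Second, for $J\ge\tfrac12$, degree $J$ is carried not only by $\psi^{+J}$ but also by $\psi^{-(J-\frac12)}$. So the same-sign pairs $(j_1,j_2)=(-J,-(J-\tfrac12))$ and $(-(J-\tfrac12),-J)$ must be checked as well. They are excluded by the $m'$-rule, giving $m_2'-m_1'\le 2J-\tfrac12$ and $m_2'\ge J+1$ respectively, exactly as in the paper. With these repairs, your claim that only the mixed pairs $(+J,-J)$ and $(-J,+J)$ with extremal weights survive is correct, and the rest of your argument goes through.
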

\begin{proof}
    This follows from several straightforward computations. In the proof we will use the equality $\spintrunc{j,m,m'}^* = (-1)^{m-m'}\spintrunc{j,-m,-m'}$, which can be deduced from $\overline{f^{j}_{m,m'}} = (-1)^{m-m'} f^{j}_{-m,-m'}$ as shown in (\cite{Edmonds1996-xa}, Eq. 4.2.7).\\
    \\
    First, we show that
    \begin{equation*}
        \spintrunc{2J+ \tfrac{1}{2}, 2J + \tfrac{1}{2}, 2J + \tfrac{1}{2}} = K_1 e^{J,-J,-J-1}_{-J, J+1, J} + K_2 e^{-J, -J, -J}_{J, J, J}
    \end{equation*}
    where $K_1, K_2$ are nonzero constants. The coefficient on column $j_1 m_1 m_1'$ and row $j_2 m_2 m_2'$ is given by
    \begin{equation}\label{eq:2Jplushalfinprod}
    \spintrunc{2J+\tfrac{1}{2}, 2J + \tfrac{1}{2}, 2J + \tfrac{1}{2}}^{j_1 m_1 m_1'}_{j_2 m_2 m_2'} = \langle f^{2J + \tfrac{1}{2}}_{2J + \tfrac{1}{2}, 2J + \tfrac{1}{2}} \psi^{j_1}_{m_1, m_1'}, \psi^{j_2}_{m_2, m_2'} \rangle.
    \end{equation}
    In order for this inner product to be nonzero, we certainly need $$\deg(j_2) \in \{2J + \tfrac{1}{2} - \deg(j_1), 2J + \tfrac{3}{2} - \deg(j_1), \ldots, 2J + \tfrac{1}{2} + \deg(j_1)\}.$$
    Since $\deg(j_1), \deg(j_2) \leq J + \frac{1}{2}$, we get two distinct possibilities: Either $\deg(j_1) = J + \frac{1}{2}$ and $\deg(j_2) = J$, or $\deg(j_1) = J$ and $\deg(j_2) = J + \frac{1}{2}$. In the case $\deg(j_1) = J + \frac{1}{2}$, we find the options $j_1 = -J, j_2 = -J^-$ and $j_1 = -J, j_2 = J$. However, $j_2 = -J^{-}$ will not give a contribution. Indeed, the inner product in Equation \eqref{eq:2Jplushalfinprod} in this case is given by
    \begin{equation*}
    \langle f^{2J + \frac{1}{2}}_{2J + \frac{1}{2}, 2J + \frac{1}{2}} \psi^{-J}_{m_1, m_1'}, \psi^{-J^-}_{m_2, m_2'} \rangle,
    \end{equation*}
    and for this to be nonzero we require $2J + \frac{1}{2} = m_2' - m_1'$. However, since $\jrel{m_2'}{J^-}$ in this case, $m_2' \leq J - \frac{1}{2}$, while $m_1' \geq -J$ similarly. But then $m_2' - m_1' \leq 2J - \frac{1}{2} < 2J + \frac{1}{2}$, so the weight condition is never met. Thus, we may continue to the $j_2 = J$ possibility. In this case, the inner product in Equation \eqref{eq:2Jplushalfinprod} becomes
    \begin{equation*}
    \langle f^{2J + \frac{1}{2}}_{2J + \frac{1}{2}, 2J + \frac{1}{2}} \psi^{-J}_{m_1, m_1'}, \psi^{J}_{m_2, m_2'} \rangle
    \end{equation*}
    instead. The weight conditions in this case are $2J + \frac{1}{2} + m_1 - \frac{1}{2} = m_2$ and $2J + \frac{1}{2} + m_1' - \frac{1}{2} = m_2'$. Considering the constraints on $m_1, m_1', m_2$ and $m_2'$, we find that $m_1 = -J$, $m_1' = -J$, $m_2 = J$ and $m_2' = J$ is the only possibility for the magnetic moments. To calculate the inner product, we use Lemma \ref{lem:maxspinorsinprod} which gives
    \begin{equation*}
        \langle f^{2J + \frac{1}{2}}_{2J + \frac{1}{2}, 2J + \frac{1}{2}} \psi^{-J}_{-J, -J}, \psi^{J}_{J,J} \rangle = \sqrt{4J+2} \; C^{2J+\frac{1}{2}, J + \frac{1}{2}, J}_{2J+\frac{1}{2}, -J-\frac{1}{2}, J} C^{2J+\frac{1}{2}, J + \frac{1}{2}, J}_{2J + \frac{1}{2}, -J-\frac{1}{2}, J}
    \end{equation*}
    for the inner product. The CGCs in this expression are nonvanishing by Lemma \ref{lem:CGnonvanish}, so that we conclude that $\spintrunc{2J+\frac{1}{2},2J+\frac{1}{2},2J+\frac{1}{2}}^{-J, -J, -J}_{J,J,J}$ is nonvanishing.\\
    \\
    The case $\deg(j_1) = J$ and $\deg(j_2) = J + \frac{1}{2}$ is similar. Here we find that $j_2 = -J$, and $j_1$ could either be $-J^-$ or $+J$. However, when $j_1 = -J^{-}$, the magnetic moment condition $2J + \frac{1}{2} + m_1' = m_2'$ cannot be fulfilled for the same reason as in the earlier case. When $j_1 = +J$, the magnetic moments can only be fulfilled with $m_1 = -J, m_1' = -J-1$ and $m_2 = J + 1, m_2' = J$. Then the CGCs arising from the inner product are again nonzero by Lemma \ref{lem:CGnonvanish}. Thus, we find that
    \begin{equation}\label{eq:2Jplushalfmatrix}
        \spintrunc{2J+ \tfrac{1}{2}, 2J + \tfrac{1}{2}, 2J + \tfrac{1}{2}} = K_1 e^{J,-J,-J-1}_{-J, J+1, J} + K_2 e^{-J, -J, -J}_{J, J, J}
    \end{equation}
    as desired.\\
    \\
    Now we take a product, and show that
    \begin{equation}\label{eq:propnr2prod}
        \spintrunc{\tfrac{1}{2}, -\tfrac{1}{2}, \tfrac{1}{2} }
        \spintrunc{2J+ \tfrac{1}{2}, 2J + \tfrac{1}{2}, 2J + \tfrac{1}{2}} = K e^{+J, -J, -J-1}_{+J, J, J}
    \end{equation}
    where $K$ is some nonzero constant.
    Since we have shown that the column indices $jmm'$ for which $\spintrunc{2J + \frac{1}{2}, 2J + \frac{1}{2}, 2J + \frac{1}{2}}^{\cdot, \cdot, \cdot}_{jmm'}$ is nonzero are precisely $-J, J+1, J$ and $J,J,J$, 
    it suffices to determine how $\spintrunc{\frac{1}{2}, -\frac{1}{2}, \frac{1}{2}}$ acts on those coordinates. First, we consider $-J, J+1, J$, where contributions by $\spintrunc{\tfrac{1}{2},-\tfrac{1}{2},\tfrac{1}{2}}$ consist of
    \begin{equation}
        \spintrunc{\tfrac{1}{2},-\tfrac{1}{2},\tfrac{1}{2}}^{-J, J+1, J}_{j, m, m'} = \langle f^{\frac{1}{2}}_{-\frac{1}{2}, \frac{1}{2}} \psi^{-J}_{J+1, J}, \psi^{j}_{m,m'} \rangle
    \end{equation}
    for $jmm' \in \II_S(J)$. Taking into account the total degrees in the inner product, we find $\deg(j) \in \{J, J+1\}$, which combined with $\deg(j) \leq J + \frac{1}{2}$ gives $j = +J$ or $j = -J^-$. For $j = +J$, the weights are forced to be $m = m' = J$, in which case we get
    \begin{equation*}
        \langle f^{\frac{1}{2}}_{-\frac{1}{2}, \frac{1}{2}} \psi^{-J}_{J+1, J}, \psi^{+J}_{J,J} \rangle = \sqrt{\frac{2}{2J+1}} C^{\frac{1}{2}, J+\frac{1}{2}, J}_{-\frac{1}{2}, J+\frac{1}{2}, J} C^{\frac{1}{2}, J + \frac{1}{2}, J}_{\frac{1}{2}, J - \frac{1}{2}, J}
    \end{equation*}
    using Lemma \ref{lem:maxspinorsinprod}, and thus the inner product is nonvanishing by Lemma \ref{lem:CGnonvanish}. For $j = -J^{-}$, we instead get the weight requirement $m' = J + \frac{1}{2}$, which is in contradiction with $\jrel{m'}{J - \frac{1}{2}}$. Thus, there is no term corresponding to $j = -J^{-}$.\\
    \\
    For the $J,J,J$ coordinates, it turns out that we find zero. The relevant coefficients of $\spintrunc{\tfrac{1}{2},-\tfrac{1}{2},\tfrac{1}{2}}^{J, J, J}_{j, m, m'}$ are
    \begin{equation}
        \spintrunc{\tfrac{1}{2},-\tfrac{1}{2},\tfrac{1}{2}}^{J, J, J}_{j, m, m'} = \langle f^{\frac{1}{2}}_{-\frac{1}{2}, \frac{1}{2}} \psi^{J}_{J, J}, \psi^{j}_{m,m'} \rangle
    \end{equation}
    for $jmm' \in \II_S(J)$. Here we have the requirement $\deg(j) \in \{J-\frac{1}{2}, J + \frac{1}{2}\}$, which leads to the three possibilities $j = -J$, $j = -(J-1)$ and $j = J-\frac{1}{2}$. In the first and second of these cases, we find $m' - \frac{1}{2} = \frac{1}{2} + J$, so $m' = J + 1$, but this is impossible with these values of $j$. For the last case $j = J - \frac{1}{2}$, we instead find $m' = \frac{1}{2} + J$, but this also does not satisfy $\jrel{m'}{J-\frac{1}{2}}$. Thus, by multiplying the matrices $\spintrunc{\frac{1}{2}, -\frac{1}{2}, \frac{1}{2} }$ and 
    $\spintrunc{2J+ \frac{1}{2}, 2J + \frac{1}{2}, 2J + \frac{1}{2}}$ using Equation \eqref{eq:2Jplushalfmatrix} we obtain Equation \eqref{eq:propnr2prod}.\\
    \\
    From these computations, it readily follows that Equation \eqref{eq:lempropnr1} holds. Since $$\spintrunc{2J+\tfrac{1}{2}, -2J - \tfrac{1}{2}, -2J - \tfrac{1}{2}} = \spintrunc{2J + \tfrac{1}{2}, 2J + \tfrac{1}{2}, 2J + \tfrac{1}{2}}^* = K_1 e^{-J, J + 1, J}_{J, -J, -J-1} + K_2 e^{J, J, J}_{-J, -J, -J},$$ we can use Equation \eqref{eq:propnr2prod} and contract indices to find Equation \eqref{eq:lempropnr1}.\\
    \\
    It remains to show Equation \eqref{eq:lempropnr2}. First,
    \begin{equation*}
        \spintrunc{2J+\tfrac{1}{2}, -2J - \tfrac{1}{2}, -2J - \tfrac{1}{2}} \spintrunc{\tfrac{1}{2}, \tfrac{1}{2}, - \tfrac{1}{2}} = \left (-\spintrunc{\tfrac{1}{2}, -\tfrac{1}{2}, \tfrac{1}{2} }
        \spintrunc{2J+ \tfrac{1}{2}, 2J + \tfrac{1}{2}, 2J + \tfrac{1}{2}} \right )^* = K e^{J, J, J}_{J, -J, -J-1}
    \end{equation*}
    using Equation \eqref{eq:propnr2prod}. Thus, when taking the product with $\spintrunc{\frac{1}{2}, - \frac{1}{2}, \frac{1}{2}}$ from the left, it suffices to consider how $\spintrunc{\frac{1}{2}, - \frac{1}{2}, \frac{1}{2}}$ acts on $\psi^{J}_{-J, -J-1}$. For an arbitrary $jmm' \in \II_S(J)$, the inner product
    \begin{equation*}
    \langle f^{\frac{1}{2}}_{\frac{1}{2}, -\frac{1}{2}} \psi^{J}_{-J,-J-1}, \psi^{j}_{m,m'} \rangle
    \end{equation*}
    corresponds to the $e^{J, -J, -J-1}_{j,m,m'}$ term of $\spintrunc{\frac{1}{2}, -\frac{1}{2}, \frac{1}{2}}$. Using a similar analysis as before, we find that $j = -J, m = -J, m' = -J$ is the only triple making this inner product nonzero, which yields Equation \eqref{eq:lempropnr2} and proves the theorem.
\end{proof}

In order to separate the $e^{j, m, m'}_{-J,-J,-J}$ and $e^{-j^-, m^+, m'^+}_{-J,-J,-J}$ terms later, we need the following lemma on linear independence of certain vectors.
\begin{lemma}\label{lem:Mlempropnr}
    Fix $j, J \in \frac{1}{2}\N$ with $j \leq J$ and let $\jrel{m}{j}, \; \jrel{m'}{j}$ with $m' \neq j$. Consider the square matrix
    \begin{equation*}
        M = \begin{pmatrix}
        \langle f^{J+j}_{-J-m, -J - m' - 1} \psi^{j}_{m, m'}, \psi^{J}_{-J, -J-1} \rangle & \langle f^{J + j}_{-J -m, -J-m' - 1} \psi^{-j^-}_{m^+, m'^+}, \psi^{J}_{-J, -J-1} \rangle \\
        \langle f^{J+j}_{J-m, J-m'} \psi^{j}_{m, m'}, \psi^{J}_{J,J} \rangle & \langle f^{J+j}_{J-m, J-m'} \psi^{-j^{-}}_{m^+, m'^+}, \psi^J_{J,J} \rangle
        \end{pmatrix}.
    \end{equation*}
    Then $M$ is invertible, i.e. $\det M \neq 0$. In particular, the rows of $M$ are linearly independent.
\end{lemma}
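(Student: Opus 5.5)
The plan is to compute all four entries of $M$ in closed form using Lemma \ref{lem:maxspinorsinprod}. The key observation is that the two spinors $\psi^{+j}_{m,m'}$ and $\psi^{-j^-}_{m^+,m'^+}$ are built from the same matrix coefficients $f^{j}_{m,m'}$ and $f^{j}_{m,m'+1}$. So in each row both entries should carry the \emph{same} product of Clebsch--Gordan coefficients, and differ only in the square-root prefactors. Each row then factors as a common nonzero scalar times a simple $2$-vector of square roots, and $\det M$ reduces to an elementary expression.

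\textbf{First row} (target spinor $\psi^{+J}_{-J,-J-1}$). I would apply the third formula of Lemma \ref{lem:maxspinorsinprod} with $j_1 = J+j$, $j_2 = j$ to the entry $M_{11}$. For $M_{12}$ I would apply the fourth formula with $j_2 = j^-$, so that $j_2^+ = j$, $m_2^- = m$ and $m_2'^+ = m'+1$. Both should give the CGC product
\[
C^{J+j,j,J}_{-J-m,m,-J}\, C^{J+j,j,J}_{-J-m'-1,m'+1,-J}
\]
with the common factor $\sqrt{(2J+2j+1)/(2J+1)}$. The prefactors are $\sqrt{j-m'}$ and $-\sqrt{j^-+1+m'^+} = -\sqrt{j+1+m'}$, so the first row is $A\,(\sqrt{j-m'},\, -\sqrt{j+1+m'})$.

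\textbf{Second row} (target spinor $\psi^{+J}_{J,J}$). Similarly, the first and second formulas of Lemma \ref{lem:maxspinorsinprod} should give the row $B\,(\sqrt{j+1+m'},\, \sqrt{j-m'})$, where
\[
B \propto C^{J+j,j,J}_{J-m,m,J}\, C^{J+j,j,J}_{J-m',m',J}.
\]
In both rows I must check that the weight-addition rules of Table \ref{tab:magneticmomentsadd} are satisfied, so that these really are the only surviving terms. I would also use the proof of that lemma, rather than its displayed form, to track the correct primed indices, since the displayed formulas contain small index typos.

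\textbf{Determinant.} From the two rows,
\[
\det M = AB\big((j-m') + (j+1+m')\big) = (2j+1)AB .
\]
It remains to show $A, B \neq 0$. Every CGC appearing has its third weight extremal ($\pm J$ for $j_3 = J$). The triangle condition holds since $J = (J+j) - j$, and the weights are eligible: for example $|-J-m'-1| \le J+j$ needs $m' \ge -j-1$, and $|m'+1| \le j$ uses $m' \neq j$. Hence Lemma \ref{lem:CGnonvanish} gives nonvanishing. The square-root normalisations are harmless, though the zero entry $\sqrt{j-m'}$ vanishes only at $m'=j$, which is excluded anyway.

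The main obstacle I expect is the index bookkeeping: verifying that the spinor $\psi^{-j^-}_{m^+,m'^+}$ yields exactly the same CGCs as $\psi^{+j}_{m,m'}$, including the relative sign coming from the fourth formula. That sign is what makes the two products in $\det M$ add rather than cancel. If the sign came out opposite, the determinant would be $AB\big((j-m')-(j+1+m')\big) = -(2m'+1)AB$ instead. This is still nonzero, because $m'$ is a half-integer of the same parity as $j$ and so $2m'+1 = 0$ would force $m' = -\tfrac12$, which I would then need to treat separately. So I would double-check that sign carefully against the explicit form of $\psi^{-j}$ in Theorem \ref{thm:eigenspinors}.
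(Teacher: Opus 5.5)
Your proposal is correct, and it takes a different route from the paper. The paper does not factor the rows. It evaluates each entry with Lemma \ref{lem:maxspinorsinprod} and converts every Clebsch--Gordan coefficient via \eqref{eq:CGsym} into a stretched coefficient $C^{j_1,j_2,j_1+j_2}$, which is strictly positive by \eqref{eq:CGj1plusj2}. From this it reads off that $a$ has sign $(-1)^{2j+m+m'+1}$ while $b,c,d$ have sign $(-1)^{2j+m+m'}$, so $ad$ and $-bc$ cannot cancel.

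You instead use that $\psi^{-j^-}_{m^+,m'^+}=(2j+1)^{-1/2}\bigl(\sqrt{j+1+m'}\,f^j_{m,m'+1},\,\sqrt{j-m'}\,f^j_{m,m'}\bigr)^T$ is built from the same matrix coefficients as $\psi^{+j}_{m,m'}$. Each row is then one CGC product times a vector of square roots. Your four entries are right, and $\det M=(2j+1)AB$ exactly.

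This needs only the nonvanishing statement of Lemma \ref{lem:CGnonvanish}, not \eqref{eq:CGsym} or \eqref{eq:CGj1plusj2}. It also shows that only the relative sign within each row matters. The paper's extra output, the individual signs of $A$ and $B$, is not needed for the lemma.

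That relative sign is in fact forced, so your fallback case cannot occur. Write $M=\mathrm{diag}(-\alpha,\beta)\,T$, where $T$ is the coefficient matrix of the two spinors with respect to the orthonormal pair $(f^j_{m,m'+1},0)$, $(0,f^j_{m,m'})$. Since $\psi^{+j}_{m,m'}$ and $\psi^{-j^-}_{m^+,m'^+}$ are orthonormal (they lie in different eigenspaces of $D$), $T$ is orthogonal and $\det T=\pm 1$. Hence $-(2m'+1)AB$ cannot arise. As written, that sentence is also self-contradictory: it calls the value ``still nonzero'' while singling out $m'=-\tfrac12$.

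One small slip: eligibility of the weight $-J-m'-1$ in $V_{J+j}$ requires $m'\le j-1$, not $m'\ge -j-1$. This is exactly the hypothesis $m'\neq j$, which is also what makes $\psi^{-j^-}_{m^+,m'^+}$ a valid spinor.
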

\begin{proof}
    In the proof we show that the determinant is nonzero by showing that the matrix entries are all nonzero, and that the signs of the matrix entries cannot cancel each other out in the determinant computation. \\
    \\
    Let $M = \begin{pmatrix}
    a & b \\ c & d
    \end{pmatrix}$ be the matrix above. We first consider $a = \langle f^{J+j}_{-J-m, -J - m' - 1} \psi^{j}_{m, m'}, \psi^{J}_{-J, -J-1} \rangle$, determine that $a \neq 0$, and determine the sign of $a$. Since $-j \leq m' \leq j-1$ it holds that $\jrel{-m' - 1}{j}$, and thus $\jrel{-J-m'-1}{J+j}$ as well. Hence, $f^{J+j}_{-J-m, -J-m'-1}$ is a nonzero matrix coefficient. By Lemma \ref{lem:maxspinorsinprod} we know
    \begin{equation*}
        a = \sqrt{\frac{2(J+j)+1}{2J+1}}\sqrt{j-m'}C^{J+j, j, J}_{-J-m, m, -J}C^{J+j,j,J}_{-J-m'-1,m'+1,-J}
    \end{equation*}
    where it should be noted that $\sqrt{j-m'} \neq 0$ as we assumed $m' \neq j$. Applying the symmetry described in Equation \eqref{eq:CGsym} to the first CGC gives
    \begin{equation*}
        C^{J+j,j,J}_{-J-m,m,-J} = (-1)^{j+m} \sqrt{\frac{2J+1}{2(J+j)+1}} C^{j, J, J+j}_{-m,-J,-J-m}
    \end{equation*}
    while for the second CGC we find
    \begin{equation*}
    C^{J+j,j,J}_{-J-m'-1,m'+1,-J} = (-1)^{j+m'+1}\sqrt{\frac{2J+1}{2(J+j)+1}} C^{j, J, J+j}_{-m'-1, -J, -J-m'-1}
    \end{equation*}
    instead. By Equation \eqref{eq:CGj1plusj2} we know that the coefficients $C^{j_1, j_2, j_1+j_2}_{m_1,m_2,m_1+m_2}$ are always strictly positive. Thus, also $a$ is nonzero, with sign equal to $(-1)^{2j+m+m'+1}$. A similar computation for the other coefficients shows that the sign of $b, c$ and $d$ are all equal to $(-1)^{2j+m+m'}$. With this we can determine the sign of the determinant, $\det M = ad - bc$. The sign of $ad$ is $(-1)^{4j+2m+2m' + 1}$, while the sign of $-bc$ is $(-1)^{4j+2m+2m' + 1}$ as well. Thus, $ad$ and $-bc$ have the same sign and cannot cancel each other out, which implies that $\det M \neq 0$.
\end{proof}
Finally, we use the previous results to prove the upper bound for the propagation number.
\begin{proof}[Proof of Theorem \ref{thm:SJpropnr}]
    We aim to show that $e^{j_1, m_1, m_1'}_{j_2, m_2, m_2'} \in S_J^{\circ 8}$ for all $j_1 m_1 m_1', \; j_2 m_2 m_2' \in \II_S(J)$. In order to do this, we show that $e^{j_1, m_1, m_1'}_{-J, -J, -J} \in S_J^{\circ 4}$. Then also the adjoint is in $S_J^{\circ 4}$, and we get $e^{j_1, m_1, m_1'}_{j_2, m_2, m_2'} = (e^{j_2, m_2, m_2'}_{-J, -J, -J})^* e^{j_1, m_1, m_1'}_{-J,-J,-J} \in S_J^{\circ 8}$.\\
    \\
    Fix $j_2m_2m_2' \in \II_S(J)$. In the case that $j_2$ is positive, we additionally assume $m_2' \neq j_2$ and $m_2' \neq -j_2 - 1$ for now. These cases are similar, but we can deal with them in a better way later, so we postpone those cases until the end of the proof. Then we want to show that $e^{j_2, m_2, m_2'}_{-J,-J,-J} \in S_J^{\circ 4}$. To treat negative and positive $j_2$ simultaneously, we define an auxiliary triple $j_1 m_1 m_1'$. In the case that $j_2$ is negative, define $j_1, m_1, m_1' = |j_2|^+, m_2^{-}, m_2'^{-}$, and in the case that $j_2$ is positive, consider $j_1 m_1 m_1' = j_2 m_2 m_2'$. Define $\sigma \in \{ 0, \frac{1}{2} \}$ as $\sigma = J + j_1 \mod 1$, so the fractional part of $J + j_1$. Consider the matrices $$A_i := \spintrunc{J+i, J - m_1, J - m_1'}$$ and 
    $$B_i := \spintrunc{J+i, -J-m_1, -J-m_1'- 1}$$
    for $i \in \{j_1, j_1 + 1, \ldots, J+\sigma\}$. The labels in the truncated matrices $A_i$ and $B_i$ are always valid since our assumptions imply $m_1' \neq j_1$ and $m_1' \neq -j_1-1$. We will combine these matrices using $e^{J, J, J}_{-J, -J, -J}\in S_J^{\circ 3}$ and $e^{J, -J, -J-1}_{-J, -J, -J} \in S_J^{\circ 3}$, so we are interested in their $J, J, J$ and $J, -J, -J-1$ components. Firstly, we have that
    \begin{equation}\label{eq:Aicoeff}
        (A_i)^{jmm'}_{J, J, J} = \langle f^{J+i}_{J-m_1, J-m_1'} \psi^{j}_{m,m'}, \psi^{J}_{J,J} \rangle
    \end{equation}
    for arbitrary $jmm'$, which forces $J \in \{J+i-\deg(j), \ldots, J+i + \deg(j)\}$. Combined with the constraint $\deg(j) \leq J + \frac{1}{2}$, this is equivalent to $\deg(j) \in \{i, i+1, \ldots, J+\sigma\}$. For positive $j$, we find that $m = m_1$ and $m' = m_1'$ are forced, while for negative $j$ we need $m = m_1^+$ and $m' = m_1'^+$ instead. This yields the decomposition
  
    \begin{equation*}
        A_i = \sum_{j=i}^{J+\sigma} (A_i)^{j, m_1, m_1'}_{J, J, J} e^{j, m_1, m_1'}_{J, J, J} +  (A_i)^{-j^{-}, m_1^+, m_1'^+}_{J, J, J} e^{-j^{-}, m_1^+, m_1'^+}_{J, J, J} + O
    \end{equation*}
    where $O$ is a matrix with $O^{jmm}_{J, J, J} = 0$ for all $jmm' \in \II_S(J)$. In this equation we use a convention that $e^{+J+\frac{1}{2}, m_1, m_1'}_{a, b, c} = 0$ since $\psi^{J+\frac{1}{2}}_{m_1, m_1'} \in \ker P_J$. Thus, when $\sigma = \frac{1}{2}$ the $i = J+\sigma$ level in the sum only contains one term, corresponding to $-j^-$, whereas the summand otherwise contains two terms. Furthermore, by Lemma \ref{lem:maxspinorsinprod} it holds that $(A_i)^{jmm'}_{J,J,J} \neq 0$ for all the $jmm'$ summed over in the equation, except for the aforementioned $j = +J+\frac{1}{2}$ term. \\
    \\
    For $B_i$, we are instead interested in components of the form
    \begin{equation}\label{eq:Bicoeff}
        (B_i)^{jmm'}_{J,-J,-J-1} = \langle f^{J+i}_{-J-m_1, -J-m_1' - 1} \psi^j_{m,m'}, \psi^J_{-J, -J-1} \rangle
    \end{equation}
    for arbitrary $jmm'$. Similar to $A_i$, we again find $\deg(j) \in \{i, i+1, \ldots, J+\sigma \}$. For positive $j$, $m = m_1, m' = m_1'$ is forced, while for negative $j$ we have $m = m_1^+, m_1' = m_1'^+$ again. Hence, we may decompose

    \begin{equation*}
    B_i = \sum_{j=i}^{J+\sigma} (B_i)^{j,m_1,m_1'}_{J,-J,-J-1} e^{j,m_1,m_1'}_{J,-J,-J-1} + (B_i)^{-j^-,m_1^+, m_1'^+}_{J,-J,-J-1} e^{-j^-,m_1^+, m_1'^+}_{J, -J, -J-1}+ U
    \end{equation*}
    where $U$ has $U^{jmm'}_{J,-J,-J-1}$ for all $jmm' \in \II_S(J)$. Similarly, when $\sigma = \frac{1}{2}$ the $i = J + \sigma$ in the summation only yields one nonzero term, as we use the convention $e^{J + \frac{1}{2}, m_1, m_1'}_{a,b,c} = 0$.\\
    \\
    Now we aim to combine $A_i$ and $B_i$. While doing this, we can get rid of the $O$ and $U$ matrices in $A_i$ and $B_i$, by considering
    \begin{equation*}
        A_i' := e^{J,J,J}_{-J,-J,-J} A_i = \sum_{j=i}^{J+\sigma} (A_i)^{j, m_1, m_1'}_{J, J, J} e^{j, m_1, m_1'}_{-J, -J, -J} +  (A_i)^{-j^{-}, m_1^+, m_1'^+}_{J, J, J} e^{-j^{-}, m_1^+, m_1'^+}_{-J, -J, -J}
    \end{equation*}
    and
    \begin{equation*}
        B_i' := e^{J,-J,-J-1}_{-J,-J,-J} B_i = \sum_{j=i}^{J+\sigma} (B_i)^{j,m_1,m_1'}_{J,-J,-J-1} e^{j,m_1,m_1'}_{-J,-J,-J} + (B_i)^{-j^-,m_1^+, m_1'^+}_{J,-J,-J-1} e^{-j^-,m_1^+, m_1'^+}_{-J, -J,-J}
    \end{equation*}
    which we can linearly combine to form
    \begin{align*}
        \lambda A_i' + \mu B_i' = \sum_{j=i}^{J+\sigma} & \left [ \lambda (A_i)^{j, m_1, m_1'}_{J, J, J} + \mu (B_i)^{j,m_1,m_1'}_{J,-J,-J-1} \right ] e^{j, m_1, m_1'}_{-J, -J, -J} \\
        +& \left [\lambda (A_i)^{-j^{-}, m_1^+, m_1'^+}_{J, J, J} + \mu (B_i)^{-j^-,m_1^+, m_1'^+}_{J,-J,-J-1} \right ] e^{-j^-, m_1^+, m_1'^+}_{-J, -J, -J}
    \end{align*}
    with $\lambda, \mu \in \C$, which lies in $S_J^{\circ 4}$. Using this expression, we can perform an induction argument in $i$ from above in order to separate the desired $e^{j_1, m_1, m_1'}_{-J, -J, -J}$ and $e^{-j_1^-, m_1^+, m_1'^+}_{-J, -J, -J}$ matrices from the rest.\\
    \\
    Define the subspaces $$W_i = \spann \{e^{j, m_1, m_1}_{-J, -J, -J}, \; e^{-j^-, m_1^+, m_1^+}_{-J, -J, -J}, \; J +\sigma \geq j \geq i \} \subseteq M_N(\C)$$
    for $i = {j_1, j_1 + 1, \ldots, J + \sigma}$. We claim that $W_i \subseteq S_J^{\circ 4}$ for all such $i$, which we show by first proving $W_{J+ \sigma} \subseteq S_J^{\circ 4}$ and then that $W_{i+1} \subseteq S_J^{\circ 4}$ implies $W_{i} \subseteq S_{J}^{\circ 4}$. In particular, $e^{j_2, m_2, m_2'}_{-J, -J, -J} \in S_J^{\circ 4}$, since by design either $j_1 m_1 m_1' = j_2 m_2 m_2'$ if $j_2$ is positive, or $-j_1^{-} m_1^+ m_1'^+ = j_2 m_2 m_2$ if $j_2$ is negative.\\
    \\
    The base case is $i = J+ \sigma$. For $\sigma = \frac{1}{2}$ we find that $B_{J+\sigma}' = K e^{-J, m_1^+, m_1'^+}_{-J,-J,-J} \in S_J^{\circ 4}$, and $W_{J+\sigma} = \spann \{e^{-J, m_1^+, m_1'^+}_{-J,-J,-J} \}$ at this level, proving the base case. When $\sigma = 0$, the base case is more involved. Here, we find 
    \begin{align*}
    \lambda A_{J}' + \mu B_{J}' &= \left [ \lambda (A_{J})^{J, m_1, m_1'}_{J, J, J} + \mu (B_{J})^{{J},m_1,m_1'}_{J,-J,-J-1} \right ] e^{J, m_1, m_1'}_{-J, -J, -J} \\
    &+ \left [\lambda (A_{J})^{-J^{-}, m_1^+, m_1'^+}_{J, J, J} + \mu (B_{J })^{-J ^-,m_1^+, m_1'^+}_{J,-J,-J-1} \right ] e^{-J ^-, m_1^+, m_1'^+}_{-J, -J, -J}
    \end{align*}
    for the linear combination. 
    By Lemma \ref{lem:Mlempropnr} with $j = J$ and using Equations \eqref{eq:Aicoeff} and \eqref{eq:Bicoeff}, we know that the vectors $(A_{J })^{J , m_1, m_1'}_{J, J, J}e^{J , m_1, m_1'}_{-J, -J, -J} + (A_{J })^{-J ^{-}, m_1^+, m_1'^+}_{J, J, J} e^{-J^-, m_1^+, m_1'^+}_{-J, -J, -J}$ and
    $(B_{J })^{{J },m_1,m_1'}_{J,-J,-J-1} e^{J , m_1, m_1'}_{-J, -J, -J} + (B_{J })^{-J ^-,m_1^+, m_1'^+}_{J,-J,-J-1} e^{-J^-, m_1^+, m_1'^+}_{-J, -J, -J}$ are linearly independent. Hence, we can find $\lambda_1, \mu_1$ and $\lambda_2, \mu_2$ so that $\lambda_1 A_{J }' + \mu_1 B_{J  }' = e^{J , m_1, m_1'}_{-J, -J, -J}$ and $\lambda_2 A_{J }' + \mu_2 B_{J  }' = e^{-J^-, m_1^+, m_1'^+}_{-J, -J, -J}$, i.e. $W_{J  } \subseteq S_J^{\circ 4}$.
    Thus, we have shown the base case. \\
    \\
    Now we move on to the induction step. Fix $i \in \{j_1, j_1 + 1, \ldots, J -1 + \sigma\}$, and suppose that $W_{i+1} \subseteq S_J^{\circ 4}$. Then to show that $W_i \subseteq S_J^{\circ 4}$ it suffices to show that $e^{i, m_1, m_1'}_{-J, -J, -J}, \; e^{-i^-, m_1^+, m_1'^+}_{-J, -J, -J} \in S_J^{\circ 4}$. Since $\lambda A_{i}' + \mu B_i' \in S_J^{\circ 4}$, it is enough to show that these matrices lie in $\lambda A_{i}' + \mu B_i' + W_{i+1} \subseteq S_J^{\circ 4}$. Writing this out gives
    \begin{align*}
    \lambda A_{i}' + \mu B_{i}' + W_{i+1} &= \left [ \lambda (A_{i})^{i, m_1, m_1'}_{J, J, J} + \mu (B_{i})^{i,m_1,m_1'}_{J,-J,-J-1} \right ] e^{i, m_1, m_1'}_{-J, -J, -J} \\
    &+ \left [\lambda (A_{i})^{-i^{-}, m_1^+, m_1'^+}_{J, J, J} + \mu (B_{i})^{-i^-,m_1^+, m_1'^+}_{J,-J,-J-1} \right ] e^{-i^-, m_1^+, m_1'^+}_{-J, -J, -J} + W_{i+1}
    \end{align*}
    for $\lambda, \mu \in \C$. Using Lemma \ref{lem:Mlempropnr} with $j = i$, combined with Equations \eqref{eq:Aicoeff} and \eqref{eq:Bicoeff} then gives that there are $\lambda_1, \mu_1$ and $\lambda_2, \mu_2$ such that $\lambda_1 A_{i}' + \mu_1 B_{i}' + W_{i+1} = e^{i, m_1, m_1'}_{-J, -J, -J} + W_{i+1}$ and $\lambda_2 A_{i}' + \mu_2 B_{i}' + W_{i+1} = e^{i^-, m_1^+, m_1'^+}_{-J, -J, -J} + W_{i+1}$. In particular, this shows that $W_i \subseteq S_J^{\circ 4}$. After a finite number of induction steps, we find that $W_{j_1} \subseteq S_J^{\circ 4}$, so $e^{j_2, m_2, m_2'}_{-J, -J, -J} \in S_J^{\circ 4}$, as desired. \\
    \\
    Now we deal with the edge cases. Let $j_2$ be a positive half-integer, and consider $m_2' = j_2$, so that we are dealing with the triple $j_2 m_2 j_2$. In particular, $-j_2^{-}, m_2^+, j_2^+$ is not a valid triple in $\II_S(J)$, which we will use to isolate the desired term. 
    Consider the matrix $$A_{j_2}' = e^{J, J, J}_{-J,-J, -J} \spintrunc{J+j_2, J-m_2, J-j_2} \in S_J^{\circ 4}$$
    which decomposes as
    \begin{equation*}
        A_{j_2}' = (A_{j_2})^{j_2, m_2, j_2}_{J, J, J} e^{j_2, m_2, j_2}_{-J, -J, -J} + \sum_{j={j_2+1}}^{J+\sigma} (A_{j_2})^{j, m_2, j_2}_{J, J, J} e^{j, m_2, j_2}_{-J, -J, -J} +  (A_{j_2})^{-j^{-}, m_2^+, j_2^+}_{J, J, J} e^{-j^{-}, m_2^+, j_2^+}_{-J, -J, -J}
    \end{equation*}
    with the same notation as before.
    For $j > j_2$, we know that matrices $e^{j,m_2,j_2}_{J,J,J} \in S_J^{\circ 4}$ and $e^{-j^-,m_2^+,j_2^+}_{J,J,J} \in S_J^{\circ 4}$ by the earlier part of the proof. 
    Hence, all the terms under the summation sign lie in $S_J^{\circ 4}$. Since
    \begin{equation*}
        (A_{j_2})^{j_2, m_2, j_2}_{J, J, J} = \langle f^{J+j_2}_{J - m_2, J - j_2} \psi^{j_2}_{m_2, j_2}, \psi^{J}_{J,J} \rangle \neq 0
    \end{equation*}
    by Lemma \ref{lem:maxspinorsinprod}, and $A_{j_2}' \in S_J^{\circ 4}$, we conclude that $e^{j_2, m_2, j_2}_{-J, -J, -J} \in S^{\circ 4}$ as well.\\
    \\
    In the case that $m_2' = -j_2 - 1$, we instead consider $B_{j_2}' = e^{J, -J, -J-1}_{-J, -J, -J} \spintrunc{J + j_2, -J - m_2, -J+m_2' + 1} \in S_J^{\circ 4}$. Similar to the $m_2' = j_2$ case, we have that $-j_2^{-}, m_2^+, (-j_2 -1)^+$ is not a valid triple in $\II_S(J)$. Thus, we get a similar decomposition
    \begin{align*}
        B_{j_2}' &= (B_{j_2})^{j_2, m_2, -j_2 -1}_{J, -J, -J-1} e^{j_2, m_2, -j_2 -1}_{-J, -J, -J} \\
        &+ \sum_{j=j_2 + 1}^{J+\sigma} (B_{j_2})^{j,m_2,-j_2-1}_{J,-J,-J-1} e^{j,m_2,-j_2 - 1}_{-J,-J,-J} + (B_{j_2})^{-j^-,m_2^+, (-j_2-1)^+}_{J,-J,-J-1} e^{-j^-,m_2^+, (-j_2-1)^+}_{-J, -J,-J}
    \end{align*}
    where 
    \begin{equation*}
            (B_{j_2})^{j_2, m_2, -j_2-1}_{J, -J, -J-1} = \langle f^{J+j_2}_{-J - m_2, -J + j_2+1} \psi^{j_2}_{m_2, -j_2-1}, \psi^{J}_{-J,-J-1} \rangle \neq 0
    \end{equation*}
    by Lemma \ref{lem:maxspinorsinprod}. All the terms in the sum lie in $S_{J}^{\circ 4}$, and $B_{j_2}' \in S_J^{\circ 4}$ as well, hence $e^{j_2, m_2, -j_2 -1}_{-J, -J, -J} \in S_J^{\circ 4}$. With this, we have finally shown that $e^{j_2, m_2, m_2'}_{-J, -J, -J} \in S_{J}^{\circ 4}$ for all $j_2, m_2, m_2' \in \II_S(J)$, proving the proposition.
\end{proof}

Although Theorem \ref{thm:SJpropnr} only gives bounds on the propagation number, we can calculate the propagation number numerically for small $J$. It turns out that already for the smallest operator system $S_0$ we find $\prop(S_0) = 4$. For other small values of $J$ the propagation number can also be determined by calculation, and we find for $J = 0, \frac{1}{2}, 1$ that $\prop(S_J) = 4$ using Python code\footnote{Available at \url{https://github.com/MathVerhoeven/Spectral-Truncation-SU2/}}. Based on this, we put forward the following conjecture.
\begin{conjecture}
    For all $J \in \frac{1}{2}\N$ it holds that $\prop(S_J) = 4$.
\end{conjecture}

\section{Spectral localizer applications}
In this section, we use the spectral localizer to numerically calculate $K_1$-classes of irreducible representations $\pi_j$ of $\SU(2)$, where we view $\pi_j \in M_{2j+1}(C(\SU(2)))$ as a matrix-valued function on $\SU(2)$. The spectral localizer introduced by Loring and Schulz-Baldes allows for the computation of the index map using only a finite amount of spectral information, in this case by working in some operator system $S_J$ as introduced in the previous chapter. The spectral localizer works by choosing an appropriately small tuning parameter, and a sufficiently large truncation parameter. We compare for what parameters we find the $K_1$-classes of $\pi_j$ numerically, which is the specific situation considered in this thesis, compared to the parameters given in \cite{SchulzBaldes2023}, which hold for a more general class of situations.\\
\\
Before discussing the spectral localizer, we record some facts about the $K$-theory of $A := C(\SU(2))$. The mapping $\Phi \colon S^3 \to \SU(2)$ given by \begin{equation*}
	\Phi(x_1,x_2,x_3,x_4) = \begin{pmatrix}
		x_4 + ix_1 & x_2 + ix_3 \\
		-x_2 + ix_3 & x_4 - ix_1
	\end{pmatrix}
\end{equation*} 
is a homeomorphism, and thus $A = C(\SU(2)) \cong C(S^3)$. Hence, we know that $K_1(C(\SU(2))) \cong K_1(S^3) \cong \Z$, which follows from a well-known argument using $S^3 = \R^3 \cup \{\infty\}$ and Bott periodicity \cite{Rordam2000, SchulzBaldesSpheres2023}. This $K_1$ group turns out to be generated by the defining representation of $\SU(2)$.
\begin{proposition}
    The unitary $\pi_{\frac{1}{2}} \in M_2(A)$ generates $K_1(A)$.
\end{proposition}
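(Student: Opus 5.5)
Under the homeomorphism $\Phi$, the unitary $\pi_{\frac{1}{2}}$ is literally the matrix-valued function $x \mapsto \Phi(x)$ on $S^3$. The plan is to reduce the statement to the classical fact that this map is a generator of $K_1(C(S^3)) \cong \Z$. I would argue via the standard identification $K_1(C(S^3)) \cong \widetilde K^{-1}(S^3) \cong [S^3, U(\infty)] \cong \pi_3(U(\infty))$, valid because $S^3$ is connected and $K_1$ of $\C$ contributes nothing. Stable homotopy classes of unitaries then become homotopy classes of maps $S^3 \to U(\infty)$. By Bott periodicity, $\pi_3(U(\infty)) \cong \Z$, and the stabilization map $\pi_3(U(2)) \to \pi_3(U(\infty))$ is an isomorphism, since $\pi_3(U(n))$ is already stable for $n \geq 2$.

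The key steps, in order, are as follows.

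First, $\pi_3(U(2)) \cong \pi_3(\SU(2)) \cong \Z$, because $U(2) \cong \SU(2) \times S^1$ as spaces and $\pi_3(S^1) = 0$.

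Second, the class of the identity map $\SU(2) \to \SU(2)$, which is exactly $\Phi$ composed with the inclusion $\SU(2) \subset U(2)$, generates $\pi_3(\SU(2)) = \pi_3(S^3)$. This holds because a generator is a degree-one map and $\Phi$ is a homeomorphism, hence of degree $\pm 1$.

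Third, combining these via the stabilization isomorphism shows that $[\pi_{\frac{1}{2}}]$ generates $K_1(A)$.

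The main obstacle is justifying the stability claim, i.e. that $\pi_3(U(2)) \to \pi_3(U(3)) \to \cdots$ consists of isomorphisms. I would use the long exact sequence of the fibration $U(n) \to U(n+1) \to S^{2n+1}$. For $n \geq 2$ we have $\pi_3(S^{2n+1}) = \pi_4(S^{2n+1}) = 0$, so the inclusion induces an isomorphism on $\pi_3$ from $U(2)$ onward.

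Alternatively, I would avoid homotopy theory and compute an index pairing. Pairing $[\pi_{\frac{1}{2}}]$ with the $K$-homology class of the Dirac operator $D$ from Theorem \ref{thm:eigenspinors} gives $\ind(P \pi_{\frac{1}{2}} P)$ with $P = \chi_{[0,\infty)}(D)$. Since the pairing $K_1(A) \cong \Z \to \Z$ is an isomorphism for the fundamental class on $S^3$, it suffices to show this index equals $\pm 1$. That computation can be done explicitly with the eigenspinor bases and the Clebsch--Gordan rules of Proposition \ref{prop:CGcond}, by tracking which weight vectors are mapped out of the positive spectral subspace. I would present the homotopy argument as the proof and mention the index computation as a consistency check relevant to the localizer section.
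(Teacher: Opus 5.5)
Your proof is correct, but it takes a different route from the paper. The paper computes no homotopy groups. It quotes from Schulz-Baldes--Stoiber an explicit generator $h=\begin{pmatrix} a & b \\ b^* & -a^*\end{pmatrix}$ of $K_1(C(S^3))$, transported to $\SU(2)$ via $\Phi$. It then joins $\pi_{\frac{1}{2}}$ to $h$ by the path of unitaries $\gamma(t)=\mathrm{diag}(e^{2\pi i t},e^{\pi i t})\,\pi_{\frac{1}{2}}$; in other words, $h$ differs from $\pi_{\frac{1}{2}}$ by the constant unitary $\mathrm{diag}(1,-1)$ in the connected group $U(2)$.

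You instead prove the generator statement directly:
\begin{itemize}
\item $K_1(C(S^3))=\varinjlim_n [S^3,U(n)]$;
\item the fibration $U(n)\to U(n+1)\to S^{2n+1}$ shows the stable range for $\pi_3$ starts at $n=2$;
\item $\pi_3(U(2))\cong\pi_3(\SU(2))$ is generated by the homeomorphism $\Phi$.
\end{itemize}

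The paper's argument is shorter and completely explicit, but it outsources the essential fact to a reference: that a degree-one map $S^3\to\SU(2)\subset U(2)$ generates. Yours is self-contained modulo standard algebraic topology. It also yields $K_1(A)\cong\Z$ without a separate appeal to Bott periodicity. Finally, it explains conceptually why the defining representation is the generator: it is the fundamental class, and $2\times 2$ unitaries already lie in the stable range.

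Two details deserve a sentence each in a written version. First, free and based homotopy classes $[S^3,U(n)]$ coincide because $U(n)$ is a path-connected topological group (replace $f$ by $f(x_0)^{-1}f$). Second, you need the isomorphism $\pi_3(\SU(2))\to\pi_3(U(2))$ to be the one induced by inclusion, not just an abstract one, so that the specific class of $\pi_{\frac{1}{2}}$ lands on a generator. This holds for the homeomorphism $A\mapsto(A\,\mathrm{diag}(\det A^{-1},1),\det A)$, or directly from the long exact sequence of $\SU(2)\to U(2)\xrightarrow{\det}S^1$.

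The index-pairing alternative you mention is also the one the paper points to. As you implicitly note, it needs the additional nontrivial input that pairing with $[D]$ is an isomorphism $K_1(A)\to\Z$.
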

\begin{proof}
    In \cite{SchulzBaldesSpheres2023}, a generator of $K_1(C(S^{3})) \cong \Z$ is given by
    \begin{equation*}
        h(g) = \begin{pmatrix}
        a & b \\ b^* & -a^*
        \end{pmatrix}
    \end{equation*}
    after translating $S^3$ to $\SU(2)$ coordinates with the homeomorphism $\Phi \colon S^3 \to \SU(2)$.
    This $h \in M_2(A)$ is homotopic to $$\pi_{\frac{1}{2}}(g) = \begin{pmatrix}
        a & b \\ -b^* & a^*
        \end{pmatrix}$$ by the path of unitaries
    \begin{equation*}
        \gamma(t)(g) := \begin{pmatrix}
            e^{2 \pi it} a & e^{2 \pi i t} b \\
            -e^{\pi it} b^* & e^{\pi i t} a^*
        \end{pmatrix}
    \end{equation*} 
    with $t \in [0,1]$, satisfying $\gamma(0) = \pi_{\frac{1}{2}}$ and $\gamma(1) = h$, so that $[h]_1 = [\pi_{\frac{1}{2}}]_1$ as desired. \\
    \\
    Alternatively, one could use the index map homomorphism. For the quantum group $\SU_q(2)$ this is done in \cite{VanSuijlekom2005}.
\end{proof}
In analogy with the circle, where the spectral localizer on $e^{imt}$ is well-behaved, we will apply the spectral localizer to the irreducible representations of $\SU(2)$.
For each $j \in \frac{1}{2}\N$, one can view $\pi_j \colon \SU(2) \to V_j$ as a unitary $2j+1$ by $2j+1$ matrix over $A$. Thus, $\pi_j$ induces a class in $K_1(A) \cong \Z$, which we can calculate explicitly using Clebsch-Gordan theory.
\begin{proposition}
    For $j \in \frac{1}{2}\N$ it holds that
    \begin{equation}\label{eq:indexrep}
        [\pi_j]_1 = \frac{1}{6}(2j)(2j+1)(2j+2)\cdot [\pi_{\frac{1}{2}}]_1.
    \end{equation}
\end{proposition}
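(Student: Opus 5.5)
The plan is to set $a_j \in \Z$ by $[\pi_j]_1 = a_j [\pi_{\frac{1}{2}}]_1$, using $K_1(A) \cong \Z$ generated by $[\pi_{\frac{1}{2}}]_1$ (previous proposition). I will derive a linear recurrence for $a_j$ from the Clebsch-Gordan decomposition $V_j \tensor V_{\frac{1}{2}} \cong V_{j+\frac{1}{2}} \oplus V_{j-\frac{1}{2}}$, and then check that $\frac{1}{6}(2j)(2j+1)(2j+2)$ satisfies the same recurrence and initial conditions. The initial data are $a_0 = 0$, since $\pi_0 = 1$ is a constant unitary and hence trivial in $K_1$, and $a_{\frac{1}{2}} = 1$.

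\textbf{Step 1 (tensor products of unitaries over a commutative algebra).} Let $u \in M_n(A)$ and $v \in M_m(A)$ be unitaries, and view $u \tensor v$ as the pointwise Kronecker product $g \mapsto u(g) \tensor v(g) \in M_{nm}(A)$. Since $A$ is commutative, $u\tensor v = (u \tensor 1_m)(1_n \tensor v)$ is a product of two unitaries in $M_{nm}(A)$. Therefore
\[
[u \tensor v]_1 = [u\tensor 1_m]_1 + [1_n \tensor v]_1 .
\]
Next, $u \tensor 1_m$ is conjugate by a constant permutation matrix to $u \oplus \cdots \oplus u$ ($m$ copies), and $1_n \tensor v = v \oplus \cdots \oplus v$ ($n$ copies). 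Conjugation by a constant unitary does not change the $K_1$-class, since $U(N)$ is connected. This gives
\[
[u\tensor v]_1 = m[u]_1 + n[v]_1 .
\]

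\textbf{Step 2 (recurrence).} The Clebsch-Gordan intertwiner $C^{j,\frac{1}{2}}$ is a constant unitary satisfying
\[
C^{j,\frac{1}{2}}\,(\pi_j \tensor \pi_{\frac{1}{2}})(g)\,(C^{j,\frac{1}{2}})^* = \pi_{j+\frac{1}{2}}(g) \oplus \pi_{j-\frac{1}{2}}(g)
\]
for $j \geq \frac{1}{2}$; for $j=0$ the second summand is absent, and I use the convention $a_{-\frac{1}{2}} := 0$. Using Step 1 and again invariance under constant conjugation, I obtain
\[
a_{j+\frac{1}{2}} + a_{j-\frac{1}{2}} = 2a_j + (2j+1)a_{\frac{1}{2}} = 2a_j + (2j+1).
\]
Writing $n = 2j$ and $b_n := a_{n/2}$, this reads $b_{n+1} - 2b_n + b_{n-1} = n+1$, with $b_0 = 0$ and $b_1 = 1$. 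These data determine $b_n$ uniquely by induction on $n$.

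\textbf{Step 3 (verification).} Let $T(n) = \frac{1}{6}n(n+1)(n+2)$ be the tetrahedral numbers. Then $T(0)=0$, $T(1)=1$, and $T(n+1)-T(n) = \frac{(n+1)(n+2)}{2}$. Taking the second difference gives $\frac{(n+1)(n+2)}{2} - \frac{n(n+1)}{2} = n+1$. So $T$ satisfies the same recurrence and initial conditions, hence $b_n = T(n)$, which is Equation \eqref{eq:indexrep}.

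I expect the main obstacle to be Step 1, specifically justifying the additivity $[u\tensor v]_1 = m[u]_1+n[v]_1$. This uses commutativity of $A$ for the factorization and connectedness of the unitary group to discard the constant permutation and Clebsch-Gordan conjugations. The remaining steps are routine.
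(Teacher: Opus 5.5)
Your proposal is correct and follows essentially the same route as the paper: the Clebsch--Gordan decomposition $V_{\frac{1}{2}} \tensor V_{\frac{n}{2}} \cong V_{\frac{n+1}{2}} \oplus V_{\frac{n-1}{2}}$, together with the factorization $\pi_{\frac{1}{2}} \tensor \pi_{\frac{n}{2}} = (\pi_{\frac{1}{2}} \tensor 1_{n+1})(1_2 \tensor \pi_{\frac{n}{2}})$, gives the recurrence $[\pi_{\frac{n+1}{2}}]_1 + [\pi_{\frac{n-1}{2}}]_1 = (n+1)[\pi_{\frac{1}{2}}]_1 + 2[\pi_{\frac{n}{2}}]_1$, which is then solved by induction on $n = 2j$ (the paper handles the factorization step with Whitehead's lemma on block-diagonal unitaries, which is equivalent to your product formula $[uv]_1 = [u]_1 + [v]_1$). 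Your second-difference check on the tetrahedral numbers is just a tidier way of doing the paper's final algebraic induction step.
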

\begin{proof}
    We show this by induction on $j = \frac{n}{2}$. The base case $[\pi_0]_1 = [1]_1 = 0$ is clear, and so is $[\pi_{\frac{1}{2}}]_1 = 1 \cdot [\pi_{\frac{1}{2}}]_1$. Let $n \in \N$, $n \geq 1$ and suppose that Equation \eqref{eq:indexrep} holds for $\pi_{\frac{n}{2}}$ and $\pi_{\frac{n-1}{2}}$. We will show that it also holds for $\pi_{\frac{n+1}{2}}$.
    We claim that the following unitary matrices define the same classes in $K_1(A)$:
    \begin{equation}\label{eq:K1equality} \left [
        \begin{pmatrix}
        \pi_{\frac{1}{2}} \tensor 1_{n+1} & 0 \\
        0 & 1_2 \tensor \pi_{\frac{n}{2}}
        \end{pmatrix}
        \right ]_1 = \left [
        \begin{pmatrix}
            \pi_{\frac{1}{2}} \tensor \pi_{\frac{n}{2}} & 0 \\
            0 & 1_2 \tensor 1_{n+1}
        \end{pmatrix}
        \right ] = \left [
        \begin{pmatrix}
            \pi_{\frac{n+1}{2}} \oplus \pi_{\frac{n-1}{2}} & 0 \\
            0 & 1_{2(n+1)}
        \end{pmatrix}
        \right ]_1.
    \end{equation}
    The first equality is due to Whitehead's lemma (\cite{Rordam2000}, Lem 2.1.5), which states that
    \begin{equation*}
        \left [\begin{pmatrix}
            u & 0 \\ 0 & v
        \end{pmatrix} \right ]_1 = \left [  \begin{pmatrix}
            uv & 0 \\ 0 & 1
        \end{pmatrix}\right ]_1
    \end{equation*}
    for unitaries $u,v$. Taking $u = \pi_{\frac{1}{2}} \tensor 1_{n+1}$ and $v = 1_2 \tensor \pi_{\frac{n}{2}}$ gives the desired equality. The second equality follows from the Clebsch-Gordan intertwiner $C^{\frac{1}{2}, \frac{n}{2}} \colon V_{\frac{1}{2}} \tensor V_{\frac{n}{2}} \to V_{\frac{n+1}{2}} \oplus V_{\frac{n-1}{2}}$. Specifically, we have that
    \begin{equation*}
        \begin{pmatrix}
            C^{\frac{1}{2}, \frac{n}{2}} & 0 \\ 0 & 1
        \end{pmatrix}
    \end{equation*}
    is a unitary matrix such that
    \begin{equation*}
                \begin{pmatrix}
            \pi_{\frac{n+1}{2}} \oplus \pi_{\frac{n-1}{2}} & 0 \\
            0 & 1_{2(n+1)}
        \end{pmatrix} = 
                \begin{pmatrix}
            C^{\frac{1}{2}, \frac{n}{2}} & 0 \\ 0 & 1
        \end{pmatrix}
                \begin{pmatrix}
            \pi_{\frac{1}{2}} \tensor \pi_{\frac{n}{2}} & 0 \\
            0 & 1_2 \tensor 1_{n+1}
        \end{pmatrix}
                \begin{pmatrix}
            C^{\frac{1}{2}, \frac{n}{2}} & 0 \\ 0 & 1
        \end{pmatrix}^*,
    \end{equation*}
    so that the $K_1$-classes satisfy
    \begin{equation*}
        \left [ \begin{pmatrix}
            \pi_{\frac{n+1}{2}} \oplus \pi_{\frac{n-1}{2}} & 0 \\
            0 & 1_{2(n+1)}
        \end{pmatrix} \right ]_1 = [C^{\frac{1}{2}, \frac{n}{2}}]_1 + \left [ \begin{pmatrix}
            \pi_{\frac{1}{2}} \tensor \pi_{\frac{n}{2}} & 0 \\
            0 & 1_2 \tensor 1_{n+1}
        \end{pmatrix} \right ]_1 - [C^{\frac{1}{2}, \frac{n}{2}}]_1
    \end{equation*}
    establishing the second equality. \\
    \\
    To simplify the $K_1$-classes on the left and right-hand sides of Equation \eqref{eq:K1equality}, we use $[u \oplus v]_1 = [u]_1 \oplus [v]_1$ which implies that $[u \tensor 1_k]_1 = k[u]$ by induction. Hence, the earlier equality gives $(n+1)[\pi_{\frac{1}{2}}]_1 + 2 [\pi_{\frac{n}{2}}]_1 = [\pi_{\frac{n+1}{2}}]_1 + [\pi_{\frac{n-1}{2}}]_1$. By the induction hypothesis, we find
    \begin{align*}
        [\pi_{\frac{n+1}{2}}]_1 &= \left (n+1 + \frac{1}{3}n(n+1)(n+2) - \frac{1}{6}(n-1)n(n+1) \right ) [\pi_{\frac{1}{2}}]_1\\
        &= \frac{1}{6}(n+1)(n+2)(n+3)[\pi_{\frac{1}{2}}]_1
    \end{align*}
    which yields the desired result after substituting $n = 2j$.
\end{proof}

The spectral localizer is a method of calculating the index pairing map using a finite matrix. We first discuss the spectral localizer in some generality before applying it in the specific case $A = C(G)$.\\
\\
In this section, $(\mathcal A, \mathcal H, D)$ will denote a spectral triple and the $C^*$-algebra $A := \overline{\mathcal A}$ will denote the closure of $\mathcal A$ in $B(\mathcal H)$. Since $\SU(2)$ is an odd-dimensional manifold, we are interested in the odd index map.
\begin{definition}[\cite{Higson2001}, Prop 8.7.1; \cite{GraciaBonda2001} p. 485; \cite{CPR2011}, Section 4.3]
    Let $U \in M_n(\mathcal A)$ be unitary. Then the \textbf{odd index pairing} of $U$ (with $D$) is given by
    \begin{equation}
        \langle [U]_1, [D] \rangle  = \ind(P_+^{\oplus n} U P_+^{\oplus n} \colon \; P_+^{\oplus n} H^{\oplus n} \to P_+^{\oplus n} H^{\oplus n} )
    \end{equation}
    where $P_+ = \chi_{(0,\infty)}(D)$ and where $\ind(T)$ denotes the Fredholm index of $T \colon \mathcal H \to \mathcal H$.
\end{definition}

To calculate a Fredholm index for some operator $T$ on $\mathcal H$, we need to know how $T$ acts on an infinite amount of vectors, which is simply impossible on a computer. However, this is remarkably not the case for the index map, where we can use the spectral localizer.
We use Chapter 10 of the book \cite{SchulzBaldes2023} as the main source for the spectral localizer.
The spectral localizer takes a different form in the even $K_0$ and odd $K_1$ case, but since $\SU(2)$ is an odd-dimensional manifold, we will focus on the odd spectral localizer.\\
\\
For the spectral localizer, we use spectral truncations of a spectral triple $(\mathcal A, \mathcal H, D)$, for which we introduce some notation. We denote by $P_{\rho} = \chi_{[-\rho, \rho]}(D)$ the finite-dimensional spectral projection for $\rho \geq 0$, which is analogous to $P_J$ as defined for $(C^{\infty}(G), L^2(S), D)$. 
For notational convenience, we will write $\mathcal H_\rho = P_\rho \mathcal H$ and $T_\rho = P_\rho T P_\rho$ for the truncation of a (potentially unbounded) operator $T$ on $\mathcal H$. Since $D$ is self-adjoint with compact resolvent, the spectrum of $D$ consists of real eigenvalues, satisfying $\lim_{n \to \infty} |\lambda_n| = \infty$, each with finite multiplicity\footnote{The cited results in this section additionally assume that $0$ does not lie in the spectrum of $D$, which is the case for $\SU(2)$. This assumption can be removed for the odd spectral localizer (\cite{SchulzBaldes2023}, p. 314; \cite{vdDungen2026}).}. Thus, $\mathcal H_{\rho}$ is a finite dimensional Hilbert space, and $T_\rho$ is a matrix on the space. In the case of $\SU(2)$, this is the same as $P_J L^2(S)$ and $\spintrunc[J]{T}$ for $\rho = \frac{3}{2} + 2J$.
\begin{definition}[\cite{SchulzBaldes2023}, Section 10.4]\label{def:speclocSB}
    Let $(\mathcal A, \mathcal H, D)$ be a spectral triple, and let $x \in \mathcal A$ be an invertible operator. Let $\kappa > 0$ be the tuning parameter and $\rho > 0$ the truncation parameter. Then the odd spectral localizer on $\mathcal H_{\rho} \oplus \mathcal H_{\rho}$ is defined by
    \begin{equation*}
        L^{odd}_{\kappa} = \begin{pmatrix}
        \kappa D & x \\
        x^* & -\kappa D
        \end{pmatrix}
    \end{equation*}
    acting on $\mathcal H \oplus \mathcal H$.
    The truncated odd spectral localizer is
    \begin{equation*}
        L^{odd}_{\kappa, \rho} = \begin{pmatrix}
        \kappa D_\rho & x_\rho \\
        x_\rho^* & -\kappa D_\rho
        \end{pmatrix}
    \end{equation*}
    acting on $\mathcal H_\rho \oplus \mathcal H_{\rho}$.
\end{definition}
The spectral localizer realizes the index by using the signature.
\begin{definition}
    Let $T \in M_n(\C)$ be a finite-dimensional matrix. Then the \textbf{signature} of $T$, denoted by $\Sig(T)$, is given by the number of strictly positive eigenvalues of $T$ minus the number of strictly negative eigenvalues, counting multiplicities.
\end{definition}

\begin{theorem}[Spectral localizer theorem, \cite{SchulzBaldes2023} Thm 10.4.1]\label{thm:SpecLocSB}
    Let $g = \|x^{-1}\|^{-1}$ be the spectral gap of the invertible operator $x \in \mathcal A$, and let $P_+ = \chi_{(0,\infty)}(D)$ denote the positive spectral projection of $D$. Suppose that
    \begin{equation}\label{eq:SLkapparho}
        \kappa \leq \frac{g^3}{12\|x\|\|[D,x]\|}, \: \phantom{=} \frac{2g}{\kappa} \leq \rho.
    \end{equation}
    Then the matrix $L^{odd}_{\kappa, \rho}$ satisfies the bound $L^{odd}_{\kappa, \rho} \geq \frac{g^2}{4}1_\rho^{\oplus 2}$. In particular, $L^{odd}_{\kappa, \rho}$ is invertible and thus has a well-defined signature $\Sig(L^{odd}_{\kappa, \rho})$. It is independent of $\kappa$ and $\rho$ satisfying Equation \eqref{eq:SLkapparho}, and
    \begin{equation*}
        \langle [u]_1, [D]\rangle = \ind(P_+uP_+) = \frac{1}{2}\Sig(L^{odd}_{\kappa, \rho})
    \end{equation*}
    where $u = x|x|^{-1}$ is the unitary phase of $x$.
\end{theorem}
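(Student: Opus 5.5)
The plan is to follow the Loring--Schulz-Baldes strategy: first prove the gap estimate on $L^{odd}_{\kappa,\rho}$, then identify its signature with the index. The gap estimate rests on squaring the localizer. Write $L = L^{odd}_{\kappa,\rho}$ and compute
\begin{equation*}
L^2 = \begin{pmatrix} \kappa^2 D_\rho^2 + x_\rho x_\rho^* & \kappa(D_\rho x_\rho - x_\rho D_\rho) \\ \kappa(x_\rho^* D_\rho - D_\rho x_\rho^*) & \kappa^2 D_\rho^2 + x_\rho^* x_\rho \end{pmatrix}.
\end{equation*}
The off-diagonal blocks are $\kappa$ times truncated commutators. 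Since $P_\rho$ commutes with $D$, these equal $\kappa P_\rho[D,x]P_\rho$, so they have norm at most $\kappa\|[D,x]\|$.

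The diagonal blocks need a lower bound. Split $\mathcal H_\rho$ into two regions with a smooth partition of unity in $D$:
\begin{itemize}
\item Where $|D| \geq g/(2\kappa)$, the term $\kappa^2 D^2$ already gives at least $g^2/4$.
\item Where $|D|$ is small, we need $x_\rho x_\rho^* \gtrsim g^2$. This holds because $x_\rho x_\rho^* = P_\rho x P_\rho x^* P_\rho$ differs from $P_\rho x x^* P_\rho \geq g^2 P_\rho$ only through $P_\rho x (1-P_\rho) x^* P_\rho$.
\end{itemize}
The error term $P_\rho x (1-P_\rho) x^* P_\rho$ is controlled on low-energy vectors by $\|[D,x]\|/\rho$, via the bound $\|(1-P_\rho) x \chi(|D|\le \rho/2)\| \le 2\|[D,x]\|/\rho$. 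The condition $2g/\kappa \leq \rho$ is what makes the two regions overlap consistently. Combining the regions with the IMS-type localization formula, the commutators of the cutoff functions with $x$ again contribute terms of order $\kappa\|x\|\|[D,x]\|/g$. The assumption $\kappa \le g^3/(12\|x\|\|[D,x]\|)$ absorbs all of these, giving $L^2 \geq g^2/4$ and hence invertibility.

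For constancy, continuity in $\kappa$ gives a homotopy of invertible matrices, so the signature is locally constant in $\kappa$. Changing $\rho$ adds eigenspaces of $D$ with $|D|>2g/\kappa$. On such spaces $\kappa D$ dominates $x$, and a block-homotopy argument shows that each added eigenspace contributes signature $0$, since $\kappa D$ and $-\kappa D$ contribute opposite signs.

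For the index identification, the plan is to deform $L$ through invertibles to a form whose signature is computable:
\begin{enumerate}
\item Replace $x$ by its unitary phase $u$ using the path $x|x|^{-t}$. The gap stays controlled if we keep $\kappa$ small, or we redo the estimate.
\item Send $\kappa \to 0$ on the complement of the finite-dimensional range while replacing $D$ by $2P_+-1$. This is the same signature argument, using that $[F,u]$ is compact.
\item The signature of the model $\begin{pmatrix} F & u\\ u^* & -F\end{pmatrix}$ compressed suitably equals $2\ind(P_+uP_+)$, by a direct kernel/cokernel count or by the known computation for the unbounded picture.
\end{enumerate}

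The main obstacle is the quantitative localization step, making the constants $12$ and $2$ come out. I would first try the approach above with the cutoff $\chi$ chosen so that $\|[\chi(D),x]\|\le \|[D,x]\|\cdot\|\chi'\|_\infty$. Since the paper states this result as a citation, in practice I would defer the exact constants to \cite{SchulzBaldes2023}.
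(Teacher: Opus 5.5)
The paper gives no proof of this theorem; it is imported from \cite{SchulzBaldes2023}. So the comparison is with the Loring--Schulz-Baldes argument given there. There is a genuine gap, and it sits where the content of the theorem is: the identification $\frac{1}{2}\Sig(L^{odd}_{\kappa,\rho})=\ind(P_+uP_+)$. Steps 2 and 3 of your index identification do not yet amount to an argument.
\begin{itemize}
\item Replacing $D$ by the bounded operator $F=2P_+-1$ removes the only mechanism that makes the truncated localizer invertible, namely $\kappa^2D^2$ being large at high energy.
\item The model $\begin{pmatrix}F&u\\u^*&-F\end{pmatrix}$ on $\mathcal H\oplus\mathcal H$ has infinite-dimensional positive and negative spectral subspaces, so it has no signature. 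The unexplained ``compressed suitably'' is exactly the missing step.
\item Appealing to ``the known computation for the unbounded picture'' is circular.
\end{itemize}

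The standard bridge between an infinite-dimensional index and a finite-dimensional signature, used in the cited source, is \emph{spectral flow}. One route is the following.
\begin{itemize}
\item Show that the spectral flow of the path $s\mapsto\begin{pmatrix}\kappa D&sx\\ sx^*&-\kappa D\end{pmatrix}$, $s\in[0,1]$, equals $\ind(P_+uP_+)$. These are unbounded self-adjoint Fredholm operators with invertible endpoints. To do this, deform $x$ to $u$ and shrink $\kappa$ at the untruncated level, conjugate by $1\oplus u$, and use a two-parameter homotopy to reduce to the classical identity $\ind(P_+uP_+)=\mathrm{Sf}(D\to uDu^*)$.
\item Run your gap estimate along $x\mapsto(1-r)x+rP_\rho xP_\rho$. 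This keeps the endpoint $s=1$ invertible while deforming the path into one that is block diagonal, with the constant invertible block $\kappa D\oplus(-\kappa D)$ on $(1-P_\rho)\mathcal H\oplus(1-P_\rho)\mathcal H$.
\item For Hermitian matrices, spectral flow is half the change of signature, and $\Sig(\kappa D_\rho\oplus(-\kappa D_\rho))=0$. This gives the formula, and independence of $\kappa,\rho$ comes for free.
\end{itemize}
A $KK$- or $E$-theoretic argument as in \cite{vdDungen2026,Li2026} is an alternative.

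Your gap estimate has the same architecture as the cited one: square the localizer, use a smooth partition of unity in $D$, invertibility of $x$ at low energy, $\kappa^2D^2$ at high energy, and commutator errors of order $\kappa\|x\|\|[D,x]\|/g$. Your bound $\|(1-P_\rho)xR\|\le 2\|[D,x]\|/\rho$ with $R=\chi(|D|\le\rho/2)$ is correct. It follows from $D(1-P_\rho)xR=(1-P_\rho)[D,x]R+(1-P_\rho)xR\,DR$, together with $|D|>\rho$ on $(1-P_\rho)\mathcal H$ and $\|DR\|\le\rho/2$. What you prove, $(L^{odd}_{\kappa,\rho})^2\ge\frac{g^2}{4}$, is also the correct reading of the statement. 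The printed $L^{odd}_{\kappa,\rho}\ge\frac{g^2}{4}$ is impossible because $\Tr L^{odd}_{\kappa,\rho}=0$.

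Three details still need repair.
\begin{enumerate}
\item The inequality $\|[\chi(D),x]\|\le\|\chi'\|_\infty\|[D,x]\|$ is not a valid general inequality: the operator Lipschitz constant is not controlled by $\|\chi'\|_\infty$ alone, and $|t|$ is not even operator Lipschitz. Use the Duhamel bound $\|[\chi(D),x]\|\le\frac{1}{2\pi}\|\widehat{\chi'}\|_{L^1}\|[D,x]\|$, with $\hat f(s)=\int f(t)e^{-ist}\,dt$. This is what actually governs the constant.
\item If the high-energy region starts at $|D|=g/(2\kappa)$, it contributes exactly $g^2/4$, leaving no room to absorb the negative error terms. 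Raise that threshold, using $\rho/2\ge g/\kappa$.
\item Your reason for $\rho$-independence is wrong. On the added eigenspaces you only have $\kappa|D|>2g$, which need not dominate $\|x\|$. Moreover, these spaces are coupled to $\mathcal H_{\rho_1}$ through $x_{\rho_2}$, so they do not contribute separately. The repair uses the same robustness as above. Along $x_t=(1-t)P_{\rho_1}xP_{\rho_1}+tP_{\rho_2}xP_{\rho_2}$, your estimate only uses four facts, all uniform in $t$:
\begin{itemize}
\item $\|x_t\|\le\|x\|$;
\item $\|[D,x_t]\|\le\|[D,x]\|$;
\item $\|[\chi(D),x_t]\|\le\|[\chi(D),x]\|$;
\item near-invertibility of $x_t$ on $\chi(|D|\le\rho_1/2)\mathcal H$.
\end{itemize}
\end{enumerate}
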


In particular, the index of $P_+ u P_+$ may be difficult to compute, since it requires finding the kernel of infinite-dimensional operators. By using the spectral localizer, this problem reduces to finding the signature of a matrix, which is more computationally feasible. 
We will investigate the behaviour of $K_1$-classes of unitary representations $\pi_j$, viewed as unitary operators in $M_{2j+1}(\mathcal A)$.
However, the spectral localizer as presented in \cite{SchulzBaldes2023} can a priori only be applied for $x \in \mathcal A$, and not for matrix-valued $X \in M_n(\mathcal A)$. Hence, in order to analyse $[\pi_{j}]_1 \in K_1(A)$ using the spectral localizer, we need to extend the formalism to matrix-valued operators, which is straightforward.
\begin{proposition}
    For $(\mathcal A, \mathcal H, D)$ a spectral triple, also $$(\mathcal A \tensor \End(V_j), \mathcal H \tensor V_j, D \tensor 1_{V_j}) \cong (M_n(\mathcal A), \mathcal H^{\oplus n}, D^{\oplus n})$$
    is a spectral triple, with $n = 2j+1$. 
\end{proposition}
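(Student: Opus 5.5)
We verify the spectral triple axioms for $(M_n(\mathcal A), \mathcal H^{\oplus n}, D^{\oplus n})$ one at a time, each reducing to the corresponding property of $(\mathcal A, \mathcal H, D)$. The first step is to fix the identification. Choosing the orthonormal basis $\{e_m\}_{\jrel{m}{j}}$ of $V_j$ gives unitary isomorphisms $\mathcal H \tensor V_j \cong \mathcal H^{\oplus n}$ and $\End(V_j) \cong M_n(\C)$, hence $\mathcal A \tensor \End(V_j) \cong M_n(\mathcal A)$ as $*$-algebras. Under these, the action of $a \tensor T$ becomes the matrix $[a T_{kl}]_{kl}$ acting entrywise, and $D \tensor 1_{V_j}$ becomes the diagonal operator $D^{\oplus n}$ with domain $\operatorname{Dom}(D)^{\oplus n}$. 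This settles the isomorphism claim, so we work only with the matrix picture.

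Next we check the analytic axioms. $M_n(\mathcal A)$ is a unital $*$-algebra acting faithfully by bounded operators on $\mathcal H^{\oplus n}$, since each entry $a_{kl}$ acts boundedly. $D^{\oplus n}$ is self-adjoint on $\operatorname{Dom}(D)^{\oplus n}$, since a direct sum of self-adjoint operators is self-adjoint. Its resolvent $(D^{\oplus n} - \lambda)^{-1} = ((D-\lambda)^{-1})^{\oplus n}$ is a finite direct sum of compact operators, hence compact. For the commutator condition, take $X = [a_{kl}] \in M_n(\mathcal A)$. Then $X$ preserves $\operatorname{Dom}(D)^{\oplus n}$ because each $a_{kl}$ preserves $\operatorname{Dom}(D)$. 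Moreover, $[D^{\oplus n}, X] = [\,[D, a_{kl}]\,]_{kl}$, since $D^{\oplus n}$ is diagonal. This is an $n \times n$ matrix of bounded operators, so it extends to a bounded operator with
\begin{equation*}
\|[D^{\oplus n}, X]\| \leq \sum_{k,l} \|[D,a_{kl}]\|.
\end{equation*}

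Finally, we record the features used later for the localizer. The spectrum of $D^{\oplus n}$ is that of $D$ with multiplicities multiplied by $n$, so $0 \notin \operatorname{spec}(D^{\oplus n})$ whenever $0 \notin \operatorname{spec}(D)$. In addition, $\chi_{[-\rho,\rho]}(D^{\oplus n}) = P_\rho^{\oplus n}$, so truncations of $M_n(\mathcal A)$ are exactly $M_n$ of the truncations. None of these steps is a real obstacle. The only care needed is the domain bookkeeping for the commutator, namely that $X$ maps $\operatorname{Dom}(D)^{\oplus n}$ into itself, and this follows entrywise from the hypotheses on $\mathcal A$.
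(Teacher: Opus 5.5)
Your proposal is correct and takes essentially the paper's approach: the paper omits this proof as "standard checks", and your entrywise verification is exactly those checks (the unitary identification via the weight basis, self-adjointness and compact resolvent of the diagonal operator $D^{\oplus n}$, and domain preservation and boundedness of $[D^{\oplus n},X] = \bigl[[D,a_{kl}]\bigr]_{kl}$). Your closing remark that $\chi_{[-\rho,\rho]}(D^{\oplus n}) = P_\rho^{\oplus n}$ also anticipates the paper's next proposition, whose proof the paper likewise omits as a direct calculation.
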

We omit the proof, since it consists of standard checks, and note that this proposition is alluded to in several sources (\cite{VanSuijlekom2025} Remark 6.22, \cite{Khalkhali2009} p. 187, \cite{CPR2011} Section 4.3). 
To use the spectral localizer on this spectral triple, we need to understand its spectral truncations. For this it suffices to know the following.
\begin{proposition}
    Let $S \subseteq \R$ be a (measurable) set and let $P_S = \chi_S(D)$, $Q_S = \chi_S(D^{\oplus n})$ be spectral projections of $D$ and $D^{\oplus n}$ on $S$. Consider truncations $P_\rho = \chi_{[-\rho, \rho]}(D)$ and $Q_{\rho} = \chi_{[-\rho, \rho]}(D^{\oplus n})$. Then $Q_S = P_S^{\oplus n}$ as projections, $Q_\rho M_n(\mathcal A) Q_\rho = M_n(P_\rho \mathcal A P_\rho)$ as operator systems, $Q_\rho \mathcal H^{\oplus n} = (P_{\rho} \mathcal H)^{\oplus n}$ as Hilbert spaces and $Q_\rho D^{\oplus n} Q_{\rho} = (P_{\rho} D P_{\rho})^{\oplus n}$ as truncated Dirac operators.
\end{proposition}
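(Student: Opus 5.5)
The plan is to reduce every claim to the functional calculus of a self-adjoint operator applied to a direct sum. I will use the identification $\mathcal H^{\oplus n} \cong \mathcal H \tensor \C^n$, under which $D^{\oplus n} = D \tensor 1_n$. First I show $Q_S = P_S^{\oplus n}$. For a Borel function $\phi$ on $\R$ the functional calculus satisfies $\phi(D \tensor 1_n) = \phi(D) \tensor 1_n$. To justify this, I note that the spectral measure of $D^{\oplus n}$ is $E^{D^{\oplus n}}(S) = E^D(S)^{\oplus n}$. The right-hand side is a projection-valued measure whose integral of the identity function is $D^{\oplus n}$, on the domain $\mathrm{Dom}(D)^{\oplus n}$. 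By uniqueness in the spectral theorem it is the spectral measure of $D^{\oplus n}$. Taking $\phi = \chi_S$ gives $Q_S = P_S^{\oplus n}$, and in particular $Q_\rho = P_\rho^{\oplus n}$.

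The remaining claims are then direct computations with the diagonal projection $Q_\rho = \mathrm{diag}(P_\rho, \ldots, P_\rho)$. For the Hilbert spaces, $Q_\rho \mathcal H^{\oplus n} = \bigoplus_{i=1}^n P_\rho \mathcal H = (P_\rho \mathcal H)^{\oplus n}$. For the Dirac operator, $Q_\rho D^{\oplus n} Q_\rho = \mathrm{diag}(P_\rho D P_\rho, \ldots)$ on $(P_\rho\mathcal H)^{\oplus n}$. This is well defined because $P_\rho \mathcal H \subseteq \mathrm{Dom}(D)$, since $P_\rho$ projects onto finitely many eigenspaces. For a matrix $X = [x_{ik}] \in M_n(\mathcal A)$, block multiplication gives $Q_\rho X Q_\rho = [P_\rho x_{ik} P_\rho]_{ik}$. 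Hence $Q_\rho M_n(\mathcal A) Q_\rho = M_n(P_\rho \mathcal A P_\rho)$ as sets. The inclusion $\supseteq$ holds because each entry can be chosen independently.

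Finally, I address the phrase "as operator systems". Both sides sit inside $B((P_\rho\mathcal H)^{\oplus n}) = M_n(B(P_\rho \mathcal H))$ as the same concrete subspace, with the same unit $Q_\rho = 1_n \tensor P_\rho$. They therefore inherit the same matrix ordering, so the identity map is a complete order isomorphism. The main obstacle, mild as it is, is the first step: one needs the spectral projections of the unbounded operator $D^{\oplus n}$ to be computed blockwise, which is where the uniqueness of the spectral measure is used. Everything afterwards is bookkeeping with block matrices.
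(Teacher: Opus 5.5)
Your proposal is correct and is essentially the direct calculation that the paper omits. You identify the spectral measure of $D^{\oplus n}$ as $E^D(\cdot)^{\oplus n}$ via uniqueness in the spectral theorem, which gives $Q_S = P_S^{\oplus n}$, and the remaining identities, including the operator-system claim via a common concrete unital subspace of $B((P_\rho\mathcal H)^{\oplus n})$, are then blockwise bookkeeping with the diagonal projection $Q_\rho$ (only your $1_n \tensor P_\rho$ should read $P_\rho \tensor 1_n$ to match your earlier convention $D^{\oplus n} = D \tensor 1_n$).
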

\begin{proof}
    This follows from a direct calculation, and is omitted.
\end{proof}
In particular, this means that the truncated operator system spectral triple of $(M_n(\mathcal A), \mathcal H^{\oplus n}, D^{\oplus n})$ is given by $(M_n(\mathcal A_\rho), \mathcal H_\rho^{\oplus}, D_\rho^{\oplus n})$, as expected. Applying the spectral localizer theorem to the matrix spectral triple gives the following proposition.
\begin{proposition}\label{prop:SpecLocMn}
    Let $n \in \N$, and let $g = \|X^{-1}\|^{-1}$ be the spectral gap of the invertible operator $X \in M_n(\mathcal A)$. Suppose that
    \begin{equation}\label{eq:MnSLkapparho}
        \kappa \leq \frac{g^3}{12\|X\|\|[D^{\oplus n},X]\|}, \: \phantom{=} \frac{2g}{\kappa} \leq \rho.
    \end{equation}
    Then the matrix
    \begin{equation*}
        L_{\kappa, \rho,n}^{odd} = \begin{pmatrix}
        \kappa D_\rho^{\oplus n} & X_\rho \\
        X_\rho^* & -\kappa D_\rho^{\oplus n} 
        \end{pmatrix}
    \end{equation*}
    satisfies the bound $L^{odd}_{\kappa, \rho,n} \geq \frac{g^2}{4}1_\rho^{\oplus 2n}$. In particular, $L^{odd}_{\kappa, \rho,n}$ is invertible and thus has a well-defined signature $\Sig(L^{odd}_{\kappa, \rho,n})$. It is independent of $\kappa$ and $\rho$ satisfying Equation \eqref{eq:MnSLkapparho}, and
    \begin{equation*}
        \langle [U]_1, [D] \rangle = \ind(P_+^{\oplus n}UP_+^{\oplus n}) = \frac{1}{2}\Sig(L^{odd}_{\kappa, \rho,n})
    \end{equation*}
    where $U = X|X|^{-1}$ is the unitary phase of $X$.
\end{proposition}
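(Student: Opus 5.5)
The plan is to reduce Proposition \ref{prop:SpecLocMn} directly to Theorem \ref{thm:SpecLocSB}, applied to the matrix spectral triple $(M_n(\mathcal A), \mathcal H^{\oplus n}, D^{\oplus n})$ from the preceding proposition. Theorem \ref{thm:SpecLocSB} is stated for an arbitrary spectral triple $(\mathcal A', \mathcal H', D')$ and an invertible $x \in \mathcal A'$. So we take $\mathcal A' = M_n(\mathcal A)$, $\mathcal H' = \mathcal H^{\oplus n}$, $D' = D^{\oplus n}$ and $x = X$.

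First I check the hypotheses of the theorem in this setting. $X$ is invertible in $M_n(\mathcal A)$ by assumption. The spectral gap $g = \|X^{-1}\|^{-1}$ is computed in $B(\mathcal H^{\oplus n})$, exactly as the theorem requires. The quantities $\|X\|$ and $\|[D^{\oplus n}, X]\|$ are precisely those appearing in the theorem for this triple. Hence condition \eqref{eq:MnSLkapparho} is literally condition \eqref{eq:SLkapparho} for the new triple. One small point: the footnoted standing assumption $0 \notin \operatorname{spec}(D)$ transfers, since $\operatorname{spec}(D^{\oplus n}) = \operatorname{spec}(D)$.

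Next I identify the objects the theorem produces with those in the proposition, using the previous proposition on spectral projections. The positive projection is $\chi_{(0,\infty)}(D^{\oplus n}) = P_+^{\oplus n}$. The truncation is $Q_\rho = P_\rho^{\oplus n}$, so $Q_\rho \mathcal H^{\oplus n} = \mathcal H_\rho^{\oplus n}$ and $Q_\rho D^{\oplus n} Q_\rho = D_\rho^{\oplus n}$. The truncated $X$ is $Q_\rho X Q_\rho = X_\rho$, meaning entrywise truncation, which lies in $M_n(P_\rho \mathcal A P_\rho)$. Its adjoint truncates to $X_\rho^*$. Consequently the truncated odd localizer of Definition \ref{def:speclocSB} for the matrix triple is exactly $L^{odd}_{\kappa,\rho,n}$, and the identity on $Q_\rho\mathcal H^{\oplus n}\oplus Q_\rho\mathcal H^{\oplus n}$ is $1_\rho^{\oplus 2n}$.

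Then Theorem \ref{thm:SpecLocSB} yields three things: the lower bound $L^{odd}_{\kappa,\rho,n} \geq \frac{g^2}{4} 1_\rho^{\oplus 2n}$, and hence invertibility; independence of the signature over the admissible range of $\kappa$ and $\rho$; and $\ind(P_+^{\oplus n} U P_+^{\oplus n}) = \frac12 \Sig(L^{odd}_{\kappa,\rho,n})$ with $U = X|X|^{-1}$. Finally, by the definition of the odd index pairing for $U \in M_n(\mathcal A)$, the left-hand side is $\langle [U]_1, [D]\rangle$.

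There is no genuine obstacle here. The only step requiring care is the bookkeeping identification: the amplified truncation must coincide with the entrywise truncation, and the odd index pairing of $U$ with $D$ must be the same as the pairing of $U$, viewed as an element of $M_n(\mathcal A)$, with $D^{\oplus n}$. Both follow from the stated identities $Q_S = P_S^{\oplus n}$.
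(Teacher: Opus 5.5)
Your proposal is correct and is the paper's argument: the paper gives no separate proof and obtains the proposition by applying Theorem~\ref{thm:SpecLocSB} to the amplified triple $(M_n(\mathcal A), \mathcal H^{\oplus n}, D^{\oplus n})$, identifying projections and truncations through $Q_S = P_S^{\oplus n}$, as you do. One caveat is inherited from the statement itself: $L^{odd}_{\kappa,\rho,n}$ has trace zero, so it cannot satisfy $L^{odd}_{\kappa,\rho,n} \geq \frac{g^2}{4}1_\rho^{\oplus 2n}$ as written; the bound should be read as $(L^{odd}_{\kappa,\rho,n})^2 \geq \frac{g^2}{4}1_\rho^{\oplus 2n}$, the standard Loring--Schulz-Baldes form, which still gives invertibility.
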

\setcounter{subsection}{0}
\subsection{The spectral localizer on $\SU(2)$}
\setcounter{subsection}{9}
In this section we investigate the behaviour of the spectral localizer on the spectral triple \phantom{==} $(C^{\infty}(\SU(2)), L^2(S), D)$. 
In several examples, the spectral localizer works in a larger range than predicted by Theorem \ref{thm:SpecLocSB}. For example, on the circle, the spectral localizer applied to a unitary operator $u \in C(S^1)$ already finds the index for $\kappa \leq \frac{2\sqrt{3} - 3}{\|[D,u]\|}$ instead of $\kappa \leq \frac{1}{12\|[D,u]\|}$, and in the special case $u = e^{imt}$ it suffices to take $\kappa < \frac{2}{|m|}$ and $|m| \leq \rho$, independent of $\kappa$ \cite{Hekkelman2021}. Also in applications of the spectral localizer to solid state physics, it has been observed that $\kappa$ can be chosen larger than expected \cite{Cerjan2024}. In an attempt to better understand the range of the spectral localizer, we consider the spectral localizer applied to $\SU(2)$ in this section.\\
\\
We will only consider the spectral triple $(C^{\infty}(G), L^2(S), D)$ and hence we keep the second slot in the index pairing fixed. The odd index pairing is in this case a well-defined group homomorphism $K_1(C(G)) \cong \Z \to \Z$, since $C^{\infty}(G)$ is closed under holomorphic functional calculus (\cite{GraciaBonda2001}, Section 3.8). This allows for the direct calculation of $K_1$-classes $[\pi_j]_1$, once the value of the index pairing applied to the generator $\langle [\pi_{\frac{1}{2}}]_1, [D] \rangle$ is known. We will find these classes by using the spectral localizer, but it turns out that we will find this for a much larger range of $\kappa$ and $\rho$ values than given in Theorem \ref{thm:SpecLocSB}. \\
\\
For unitaries $\pi_j$ the odd spectral localizer is given by
\begin{equation*}
    L_{\kappa, \rho}(\pi_j) := \begin{pmatrix}
    \kappa D_\rho^{\oplus 2j + 1} & (\pi_j)_\rho \\ (\pi_j)_\rho^* & -\kappa D_{\rho}^{\oplus 2j + 1}
    \end{pmatrix} = \begin{pmatrix}
    \kappa \spintrunc{D}^{\oplus 2j + 1} & \spintrunc{\pi_j} \\
    \spintrunc{\pi_j}^* & - \kappa \spintrunc{D}^{\oplus 2j + 1}
    \end{pmatrix}
\end{equation*}
for a tuning parameter $\kappa$ and a truncation parameter $\rho = \frac{3}{2} + 2J$, where
\begin{equation*}
    \spintrunc{\pi_j} = \sqrt{\frac{1}{2j+1}} \begin{pmatrix}
    \spintrunc{j,j,j} & \dots & \spintrunc{j,-j,j} \\
    \vdots & \ddots & \vdots \\
    \spintrunc{j,j,-j} & \dots & \spintrunc{j,-j,-j}
    \end{pmatrix}
\end{equation*}
as a block matrix with $\spintrunc{jmm'} \in S_J$ the spectral truncation of $f^{j}_{m,m'}$. 
We can calculate the signature of $L_{\kappa, \rho}(\pi_j)$ numerically using Python, and by Proposition \ref{prop:SpecLocMn} we know that this gives the index map $\langle [\pi_j]_1, [D] \rangle = \frac{1}{2}\Sig(L_{\kappa, \rho}(\pi_j))$ for appropriate values of $\kappa$ and $\rho$.
In Figure \ref{fig:SpecLocplots} we present $\frac{1}{2}\Sig(L_{\kappa, \rho}(\pi_j))$ as a function of the tuning parameter $\kappa$ for several values of $\rho$ and $j$.
\newpage
\begin{figure}[H]
     \centering
     \begin{subfigure}[b]{0.45\textwidth}
         \centering
         \includegraphics[width=\textwidth]{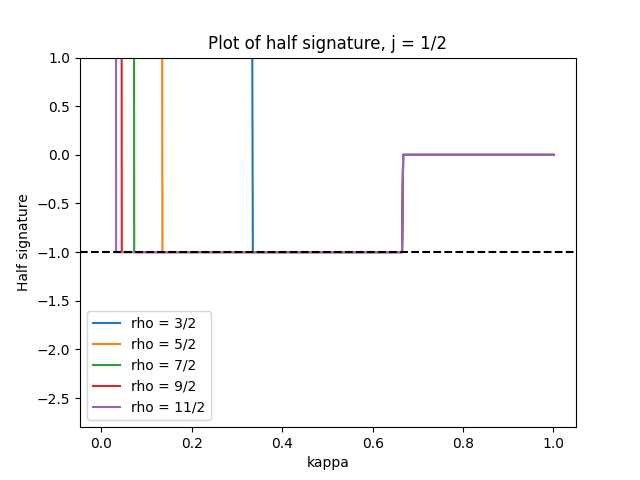}
         \caption{$j=\frac{1}{2}$}
         \label{fig:SLj1/2}
     \end{subfigure}
     \hfill
     \begin{subfigure}[b]{0.45\textwidth}
         \centering
         \includegraphics[width=\textwidth]{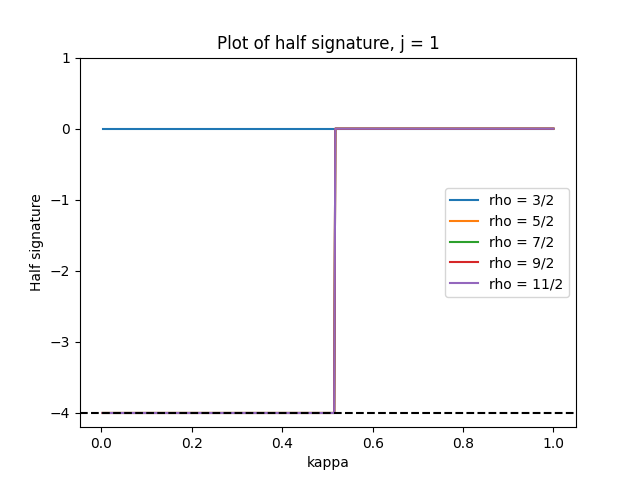}
         \caption{$j=1$}
         \label{fig:SLj1}
     \end{subfigure}
     \vfill
     \begin{subfigure}[b]{0.45\textwidth}
         \centering
         \includegraphics[width=\textwidth]{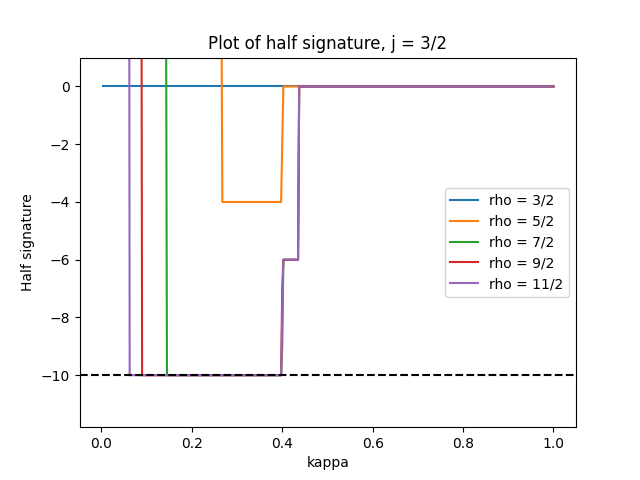}
         \caption{$j=\frac{3}{2}$}
         \label{fig:SLj3/2}
     \end{subfigure}
     \hfill
     \begin{subfigure}[b]{0.45\textwidth}
         \centering
         \includegraphics[width=\textwidth]{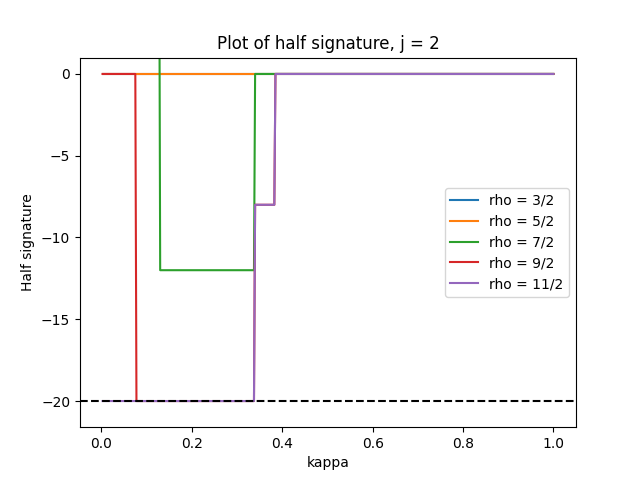}
         \caption{$j=2$}
         \label{fig:SLj2}
     \end{subfigure}
     \vfill
     \begin{subfigure}[b]{0.45\textwidth}
         \centering
         \includegraphics[width=\textwidth]{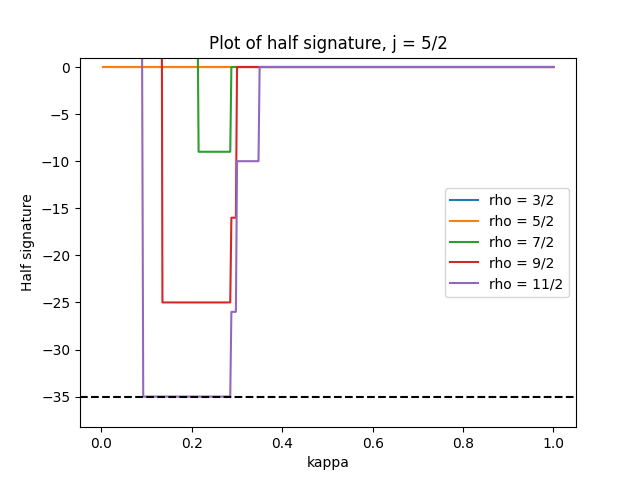}
         \caption{$j=\frac{5}{2}$}
         \label{fig:SLj5/2}
     \end{subfigure}
    \hfill
     \begin{subfigure}[b]{0.45\textwidth}
         \centering
         \includegraphics[width=\textwidth]{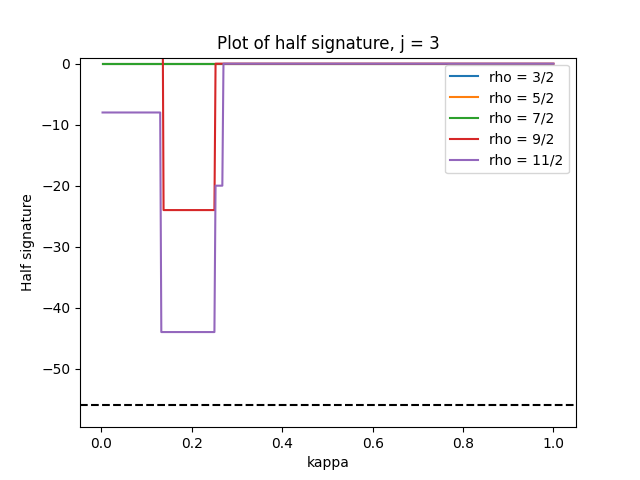}
         \caption{$j=3$}
         \label{fig:SLj3}
     \end{subfigure}
        \caption{The value of $\frac{1}{2}\Sig(L_{\kappa, \rho}(\pi_j))$ as a function of $\kappa$ for several values of $\rho$ and $j$. The analytically computed index appears as a horizontally dotted line. Code producing these plots is available at \url{https://github.com/MathVerhoeven/Spectral-Truncation-SU2/}}
        \label{fig:SpecLocplots}
\end{figure}
\newpage

Several types of behaviour can be read off from these plots, at least for low values of $j$. Firstly, it appears that the index of $\pi_j$ can be determined for $\rho \geq \frac{3}{2} + 2(j-\frac{1}{2})$, which amounts to performing calculations starting in the operator system $S_{j-\frac{1}{2}}$. As such, it is expected that the plot for $j = 3$ in Figure \ref{fig:SLj3} does not capture the index, as the highest truncation level plotted is $\rho = 11/2 < 13/2$. Furthermore, the highest value of the truncation parameter $\kappa$ for which the index is found seems to be given by $\kappa = \frac{1}{j+1}$, independently of the truncation level. \\
\\
These should be compared to the bounds on $\rho$ and $\kappa$ obtained from Proposition \ref{prop:SpecLocMn}. Since these bounds hold in a far more general setting, and do not take the representation theory of the situation into account, it is natural for these bounds to be worse. Nevertheless, it is interesting to see the discrepancy in a specific scenario such as the irreducible representations of $\SU(2)$. Since $\pi_j$ is unitary, the spectral gap and norm satisfy $g = 1$ and $\|\pi_j\| = 1$ respectively, which simplifies the bounds in Equation \eqref{eq:MnSLkapparho} to
\begin{equation}\label{eq:unitarykapparho}
        \kappa \leq \frac{1}{12\|[D^{\oplus 2j+1},\pi_j]\|}, \: \phantom{=} \frac{2}{\kappa} \leq \rho.
\end{equation}
Since we are dealing with matrix valued functions, it is somewhat difficult to determine the norm of the commutator $\|[D^{\oplus 2j + 1}, \pi_j]\|$ exactly. However, we can still calculate a lower bound by simply evaluating the commutator on a certain spinor. By taking
\begin{equation*}
    \Psi = \begin{pmatrix}
    \psi^{+0}_{0,0} \\ 0 \\ \vdots \\ 0
    \end{pmatrix} \in L^2(S) \tensor \C^{2j+1}
\end{equation*}
with $\|\Psi\| = 1$, we can explicitly calculate that $\|[D^{\oplus 2j+1}, \pi_j] \Psi\| = \sqrt{4j^2 + 6 j + 2} \frac{\sqrt{2j}}{\sqrt{2j+1}}$, 
which implies that certainly $\|[D^{\oplus 2j+1}, \pi_j]\| \geq \sqrt{4j^2 + 6 j + 2} \frac{\sqrt{2j}}{\sqrt{2j+1}}$. With this lower bound, it follows that 
\begin{equation*}
    \kappa \leq \frac{1}{12\|[D^{\oplus 2j+1},\pi_j]\|} \leq \frac{\sqrt{2j+1}}{12\sqrt{2j}\sqrt{4j^2 + 6j + 2}}
\end{equation*} is a (possibly quite weak) predicted upper bound for the tuning parameter. For $j = \frac{1}{2}, 1, \frac{3}{2}, 2$ this gives values of approximately $0.049$, $0.030$, $0.022$ and $0.017$, which are at least an order of magnitude smaller than the values $\kappa = \frac{1}{j+1}$ obtained from the figures. As a consequence, the truncation level $\rho$ predicted by Equation \eqref{eq:unitarykapparho} is far higher than what turns out to be necessary. For example, for $j = \frac{1}{2}$ one would need at least $\rho > 41.5$, corresponding to the operator system $S_{20}$. Elements of the operator system $S_{20}$ are 49364 by 49364 matrices, which is much larger than the 4 by 4 matrices used in $S_0$ to determine the same index. Already for $\pi_2$ Equation \eqref{eq:unitarykapparho} would predict the use of matrices with side length larger than 1 million, as elements of $S_{58}$.\\
\\
On the other hand, the lowest working values of $\kappa$ show more subtle behaviour. For the half-integer values of $j$ investigated, there is a cutoff value of $\kappa$, so that $$\lim_{\kappa \to 0^+} \frac{1}{2}\Sig(L_{\kappa, \rho}(\pi_j)) \neq \langle [\pi_{j}]_1, [D] \rangle.$$ Furthermore, this cutoff value depends on $\rho$ in the sense that higher truncations (more information) yield more eligible values of $\kappa$ (less tuning-dependent). However, for integer $j$, the story is somewhat different. For $j = 1$ we see in Figure \ref{fig:SLj1} that $$\lim_{\kappa \to 0^+} \frac{1}{2} \Sig(L_{\kappa, \rho}(\pi_1)) = \langle [\pi_1]_1, [D] \rangle$$ for all values of $\rho$ for which the index is successfully found. On the other hand, for $j =2$ in Figure \ref{fig:SLj2} we see the same behaviour for $\rho = \frac{11}{2}$, but not for $\rho = \frac{9}{2}$ even though the index can already be determined at $\rho = \frac{9}{2}$.\\
\\
This behaviour has implications for the definition of the index map on operator system $K$-theory defined in \cite{VanSuijlekom2024}. There, an index mapping $\ind_2^{\delta}$ for an operator system $S$ is defined for $\delta > 0$ by
\begin{equation*}
    \ind^\delta_2(x) := \lim_{\kappa, s \to 0} \frac{1}{4}\Sig(\mathcal L_{\kappa}(D,x,s))
\end{equation*}
for $x \in S$ satisfying some notion of invertibility up to $\delta$, where $\mathcal L_{\kappa}(D,x,s)$ is a generalized spectral localizer. For truncated spectral triples $(P_\rho \mathcal A P_\rho, P_\rho \mathcal H, P_\rho D P_\rho)$ this spectral localizer is shown to reduce to $(L_{\kappa,\rho}^{odd}(x))^{\oplus 2}$ as defined in Definition \ref{def:speclocSB} for $x \in \mathcal A$ invertible, by taking $s = 0$. Then $\delta$ can be picked appropriately, and one finds the definition
\begin{equation*}
    \ind_2(x) := \lim_{\kappa \to 0}\frac{1}{2} \Sig(L_{\kappa, \rho}^{odd}(x))
\end{equation*} 
for this index mapping on a truncated spectral triple. However, based on the plots in Figure \ref{fig:SpecLocplots} for half-integer values of $j$, we predict that this limit does not recover the usual index mapping for $\mathcal A = C^{\infty}(\SU(2))$. On the circle it turns out that there are no lower bounds on $\kappa$, which motivated this definition of $\ind_2$ in the first place.
\\
\\
Another comparison can be made to the analysis of the spectral localizer of the circle $S^1$ in \cite{Hekkelman2021}. An important difference of the circle compared to $\SU(2)$ is that the circle is a commutative Lie group, and hence all irreducible representations are one-dimensional, given by $e^{int}$ for $n \in \Z$. The relevant bounds of $\kappa$ and $\rho$ are explicitly determined in terms of $n$ by writing out the spectral localizer and using properties of Toeplitz matrices, i.e. truncations of $e^{int}$. Furthermore, in $K_1(C(S^1)) \cong \Z$ it holds that $[e^{int}]_1 = -n$, and as such any function $f \in C(S^1)$ with $0 \notin f(S^1)$ is homotopic to $e^{int}$ for some $n$. With this in mind, the bounds on $\kappa$ and $\rho$ can be extended and modified to give similar bounds for the spectral localizer applied to any unitary $u \in C^{\infty}(S^1)$. \\
\\
For $C(\SU(2))$, the explicit determination of these bounds is more difficult. One reason is that the structure of Toeplitz matrices is easier to describe compared to the structure of truncations $\spintrunc{jmm}$. As such, while the eigenvalues of the spectral localizer are algebraically computable for the circle, this is more difficult to achieve for $\SU(2)$.  
Even if this were to be done, it holds that the indices of representations are not all of $K_1(A)$, i.e. $\{[\pi_j]_1\}_{j \in \frac{1}{2}\N} \neq \Z$, compared to the case on the circle where $\{[e^{int}]_1\}_{n \in \Z} = \Z$ does hold.
As such, it is not necessarily true that a (matrix-valued) function is homotopic to one of the $\pi_j$, which obstructs the extension of $\kappa$ and $\rho$ bounds found for $\pi_j$ to arbitrary matrix-valued functions. An alternative would be to find bounds for determining the index of $\pi_j^{n}$, which has index $-n$. Regardless, even without these explicit bounds it is clear that the indices of irreducible representations of $S^1$ and $\SU(2)$ are found in a larger range of $\kappa$ and $\rho$ than the spectral localizer theorem predicts.

\bibliographystyle{alpha} 
\bibliography{reference_items.bib}

@book{Duistermaat2000,
  title = {Lie Groups},
  ISBN = {9783642569364},
  ISSN = {2191-6675},
  url = {http://dx.doi.org/10.1007/978-3-642-56936-4},
  DOI = {10.1007/978-3-642-56936-4},
  journal = {Universitext},
  publisher = {Springer Berlin Heidelberg},
  author = {Duistermaat,  Hans and Kolk, Johan},
  year = {2000}
}

@BOOK{Bohm2001,
  title     = "Quantum Mechanics: Foundations and Applications",
  author    = "Bohm, Arno",
  publisher = "Springer",
  series    = "Theoretical and Mathematical Physics",
  month     =  jul,
  year      =  2001,
  address   = "New York, NY",
  language  = "en",
  edition = 3
}

@article {Koornwinder1981,
    AUTHOR = {Koornwinder, Tom},
     TITLE = {Clebsch-{G}ordan coefficients for {${\rm SU}(2)$}\ and {H}ahn
              polynomials},
   JOURNAL = {Nieuw Arch. Wisk. (3)},
  FJOURNAL = {Nieuw Archief voor Wiskunde. Derde Serie},
    VOLUME = {29},
      YEAR = {1981},
    NUMBER = {2},
     PAGES = {140--155},
      ISSN = {0028-9825},
   MRCLASS = {22E70 (33A75 81C40)},
  MRNUMBER = {641673},
MRREVIEWER = {Ji\v r\'i\ Patera},
}

@book{Paulsen2003,
  title = {Completely Bounded Maps and Operator Algebras},
  ISBN = {9780511546631},
  url = {http://dx.doi.org/10.1017/CBO9780511546631},
  DOI = {10.1017/cbo9780511546631},
  publisher = {Cambridge University Press},
  author = {Paulsen,  Vern},
  year = {2003},
  month = feb 
}

@article{ConnesvS2020,
  title = {Spectral Truncations in Noncommutative Geometry and Operator Systems},
  volume = {383},
  ISSN = {1432-0916},
  url = {http://dx.doi.org/10.1007/s00220-020-03825-x},
  DOI = {10.1007/s00220-020-03825-x},
  number = {3},
  journal = {Communications in Mathematical Physics},
  publisher = {Springer Science and Business Media LLC},
  author = {Connes,  Alain and van Suijlekom,  Walter D.},
  year = {2020},
  month = jul,
  pages = {2021–2067}
}

@book{Hall2015,
  title = {Lie Groups,  Lie Algebras,  and Representations: An Elementary Introduction},
  ISBN = {9783319134673},
  ISSN = {2197-5612},
  url = {http://dx.doi.org/10.1007/978-3-319-13467-3},
  DOI = {10.1007/978-3-319-13467-3},
  journal = {Graduate Texts in Mathematics},
  publisher = {Springer International Publishing},
  author = {Hall,  Brian C.},
  year = {2015}
}

@article{Homma2000,
  title = {A Representation of $Spin(4)$ on the Eigenspinors of the Dirac Operator on $S^3$},
  volume = {23},
  ISSN = {0387-3870},
  url = {http://dx.doi.org/10.3836/tjm/1255958682},
  DOI = {10.3836/tjm/1255958682},
  number = {2},
  journal = {Tokyo Journal of Mathematics},
  publisher = {Tokyo Journal of Mathematics},
  author = {Homma,  Yasushi},
  year = {2000},
  month = dec 
}

@article{Gielen2023,
  title = {Operator systems for tolerance relations on finite sets},
  volume = {34},
  ISSN = {0019-3577},
  url = {http://dx.doi.org/10.1016/j.indag.2022.12.009},
  DOI = {10.1016/j.indag.2022.12.009},
  number = {3},
  journal = {Indagationes Mathematicae},
  publisher = {Elsevier BV},
  author = {Gielen,  Mick and van Suijlekom,  Walter D.},
  year = {2023},
  month = may,
  pages = {606–621}
}

@mastersthesis{Koot2021,
  author = {Ian Koot},
  school = {Radboud University Nijmegen},
  title = {Properties of the Propagation number},
  year = {2021}
}

@book{Varshalovich1988,
  title = {Quantum Theory of Angular Momentum},
  ISBN = {9789814415491},
  url = {http://dx.doi.org/10.1142/0270},
  DOI = {10.1142/0270},
  publisher = {WORLD SCIENTIFIC},
  author = {Varshalovich,  Dmitry and Moskalev,  Anatoliy and Khersonskii, Valeriy},
  year = {1988},
  month = oct 
}

@book{SchulzBaldes2023,
  title = {Spectral Flow: A Functional Analytic and Index-Theoretic Approach},
  ISBN = {9783111172477},
  url = {http://dx.doi.org/10.1515/9783111172477},
  DOI = {10.1515/9783111172477},
  publisher = {De Gruyter},
  author = {Doll,  Nora and Schulz-Baldes,  Hermann and Waterstraat,  Nils},
  year = {2023},
  month = may 
}

@book{VanSuijlekom2025,
  title = {Noncommutative Geometry and Particle Physics},
  ISBN = {9783031591204},
  ISSN = {2352-3905},
  url = {http://dx.doi.org/10.1007/978-3-031-59120-4},
  DOI = {10.1007/978-3-031-59120-4},
  journal = {Mathematical Physics Studies},
  publisher = {Springer Nature Switzerland},
  author = {van Suijlekom,  Walter D.},
  year = {2025}
}

@book{Rordam2000,
  title = {An Introduction to K-Theory for C*-Algebras},
  url = {http://dx.doi.org/10.1017/CBO9780511623806.002},
  DOI = {10.1017/cbo9780511623806.002},
  publisher = {Cambridge University Press},
  author = {Rørdam,  Mikael and Larsen,  Flemming and Laustsen,  Niels},
  year = {2000}
}

@BOOK{Khalkhali2009,
  title     = "Basic Noncommutative Geometry",
  author    = "Khalkhali, Masoud",
  publisher = "European Mathematical Society",
  series    = "EMS Series of Lectures in Mathematics",
  month     =  dec,
  year      =  2009,
  address   = "Z{\"u}rich, Switzerland",
  language  = "en"
}

@inbook{CPR2011,
  title = {Spectral triples: examples and index theory},
  ISBN = {9783037195086},
  ISSN = {2943-4939},
  url = {http://dx.doi.org/10.4171/008-1/5},
  DOI = {10.4171/008-1/5},
  booktitle = {Noncommutative Geometry and Physics: Renormalisation,  Motives,  Index Theory},
  publisher = {EMS Press},
  author = {Carey,  Alan L. and Phillips,  John and Rennie,  Adam},
  year = {2011},
  month = jul,
  pages = {175-265}
}

@misc{vdDungen2026,
      title={A K-theoretic note on the spectral localiser}, 
      author={Koen van den Dungen},
      year={2026},
      journal = {arXiv},
      howpublished = {https://arxiv.org/abs/2602.20961},
      eprint={2602.20961},
      archivePrefix={arXiv},
      primaryClass={math.KT},
      url={https://arxiv.org/abs/2602.20961}, 
}

@article{SchulzBaldesSpheres2023,
  title = {The generators of the K-groups of the sphere},
  volume = {41},
  ISSN = {0723-0869},
  url = {http://dx.doi.org/10.1016/j.exmath.2023.125519},
  DOI = {10.1016/j.exmath.2023.125519},
  number = {4},
  journal = {Expositiones Mathematicae},
  publisher = {Elsevier BV},
  author = {Schulz-Baldes,  Hermann and Stoiber,  Tom},
  year = {2023},
  month = dec,
  pages = {125519}
}

@mastersthesis{Hekkelman2021,
  author = {Eva-Maria Hekkelman},
  school = {Radboud University Nijmegen},
  title = {Truncated Geometry on the Circle},
  year = {2021}
}

@article{Cerjan2024,
  title = {Classifying photonic topology using the spectral localizer and numerical K-theory},
  volume = {9},
  ISSN = {2378-0967},
  url = {http://dx.doi.org/10.1063/5.0239018},
  DOI = {10.1063/5.0239018},
  number = {11},
  journal = {APL Photonics},
  publisher = {AIP Publishing},
  author = {Cerjan,  Alexander and Loring,  Terry A.},
  year = {2024},
  month = nov 
}

@article{VanSuijlekom2005,
  title = {The Local Index Formula for SUq(2)},
  volume = {35},
  ISSN = {0920-3036},
  url = {http://dx.doi.org/10.1007/s10977-005-3116-4},
  DOI = {10.1007/s10977-005-3116-4},
  number = {3–4},
  journal = {K-Theory},
  publisher = {Portico},
  author = {van Suijlekom,  Walter D. and Dabrowski,  Ludwik and Landi,  Giovanni and Sitarz,  Andrzej and Várilly,  Joseph C.},
  year = {2005},
  month = aug,
  pages = {375–394}
}

@article{Leimbach24,
title = {Gromov–Hausdorff convergence of spectral truncations for tori},
journal = {Advances in Mathematics},
volume = {439},
pages = {109496},
year = {2024},
issn = {0001-8708},
doi = {https://doi.org/10.1016/j.aim.2024.109496},
url = {https://www.sciencedirect.com/science/article/pii/S0001870824000112},
author = {Malte Leimbach and Walter D. van Suijlekom}
}

@book{GraciaBonda2001,
  title = {Elements of Noncommutative Geometry},
  ISBN = {9781461200055},
  url = {http://dx.doi.org/10.1007/978-1-4612-0005-5},
  DOI = {10.1007/978-1-4612-0005-5},
  publisher = {Birkh\"{a}user Boston},
  author = {Gracia-Bondía,  José M. and Várilly,  Joseph C. and Figueroa,  Héctor},
  year = {2001}
}

@BOOK{Higson2001,
  title     = "Analytic {K-Homology}",
  author    = "Higson, Nigel and Roe, John",
  publisher = "Oxford University Press",
  series    = "Oxford Mathematical Monographs",
  month     =  feb,
  year      =  2001,
  address   = "London, England",
  language  = "en"
}

@misc{VanSuijlekom2024,
  doi = {10.48550/ARXIV.2411.02981},
  url = {https://arxiv.org/abs/2411.02981},
  howpublished = {https://arxiv.org/abs/2411.02981},
  author = {van Suijlekom,  Walter D.},
  title = {Higher K-groups for operator systems},
  publisher = {arXiv},
  year = {2024},
  copyright = {Creative Commons Attribution 4.0 International}
}

@article{Choi1977,
title = {Injectivity and operator spaces},
journal = {Journal of Functional Analysis},
volume = {24},
number = {2},
pages = {156-209},
year = {1977},
issn = {0022-1236},
doi = {https://doi.org/10.1016/0022-1236(77)90052-0},
url = {https://www.sciencedirect.com/science/article/pii/0022123677900520},
author = {Man-Duen Choi and Edward G. Effros}
}

@article{Leimbach2025,
  title = {Convergence of Peter–Weyl truncations of compact quantum groups},
  ISSN = {1661-6960},
  url = {http://dx.doi.org/10.4171/jncg/634},
  DOI = {10.4171/jncg/634},
  journal = {Journal of Noncommutative Geometry},
  publisher = {European Mathematical Society - EMS - Publishing House GmbH},
  author = {Leimbach,  Malte},
  year = {2025},
  month = nov 
}

@article{GaudillotEstrada2025,
  title = {Convergence of Spectral Truncations for Compact Metric Groups},
  volume = {2025},
  ISSN = {1687-0247},
  url = {http://dx.doi.org/10.1093/imrn/rnaf197},
  DOI = {10.1093/imrn/rnaf197},
  number = {13},
  journal = {International Mathematics Research Notices},
  publisher = {Oxford University Press (OUP)},
  author = {Gaudillot--Estrada,  Yvann and van Suijlekom,  Walter D.},
  year = {2025},
  month = jul 
}

@misc{Peng2026,
  doi = {10.48550/ARXIV.2604.27475},
  url = {https://arxiv.org/abs/2604.27475},
  howpublished = {https://arxiv.org/abs/2604.27475},
  author = {Peng,  Xintao and Wang,  Qin},
  title = {Gromov-Hausdorff Convergence of Spectral Truncations for Quantum Groups},
  publisher = {arXiv},
  year = {2026},
  copyright = {Creative Commons Attribution 4.0 International}
}

@article{Hamana1979,
  title={Injective envelopes of operator systems},
  author={Hamana, Masamichi},
  journal={Publications of the Research Institute for Mathematical Sciences},
  volume={15},
  number={3},
  pages={773--785},
  year={1979}
}

@book{Prodan2016,
  title = {Bulk and Boundary Invariants for Complex Topological Insulators: From K-Theory to Physics},
  ISBN = {9783319293516},
  ISSN = {2352-3905},
  url = {http://dx.doi.org/10.1007/978-3-319-29351-6},
  DOI = {10.1007/978-3-319-29351-6},
  journal = {Mathematical Physics Studies},
  publisher = {Springer International Publishing},
  author = {Prodan,  Emil and Schulz-Baldes,  Hermann},
  year = {2016}
}

@article{Loring2020,
  title = {The spectral localizer for even index pairings},
  volume = {14},
  ISSN = {1661-6960},
  url = {http://dx.doi.org/10.4171/JNCG/357},
  DOI = {10.4171/jncg/357},
  number = {1},
  journal = {Journal of Noncommutative Geometry},
  publisher = {European Mathematical Society - EMS - Publishing House GmbH},
  author = {Loring,  Terry A. and Schulz-Baldes,  Hermann},
  year = {2020},
  month = Feb,
  pages = {1–23}
}

@misc{Loring2017,
  doi = {10.48550/ARXIV.1701.07455},
  url = {https://arxiv.org/abs/1701.07455},
  author = {Loring,  Terry and Schulz-Baldes,  Hermann},
  title = {Finite volume calculation of $K$-theory invariants},
  publisher = {arXiv},
  year = {2017},
  copyright = {arXiv.org perpetual,  non-exclusive license},
  howpublished={https://arxiv.org/abs/1701.07455}
}

@article{Loring2019,
  title = {Spectral Flow Argument Localizing an Odd Index Pairing},
  volume = {62},
  ISSN = {1496-4287},
  url = {http://dx.doi.org/10.4153/CMB-2018-013-x},
  DOI = {10.4153/cmb-2018-013-x},
  number = {02},
  journal = {Canadian Mathematical Bulletin},
  publisher = {Canadian Mathematical Society},
  author = {Loring,  Terry A. and Schulz-Baldes,  Hermann},
  year = {2019},
  month = Jan,
  pages = {373–381}
}

@article{Eleftherakis2026,
  title = {Morita equivalence for operator systems},
  ISSN = {1565-8538},
  url = {http://dx.doi.org/10.1007/s11854-025-0428-6},
  DOI = {10.1007/s11854-025-0428-6},
  journal = {Journal d’Analyse Mathématique},
  publisher = {Springer Science and Business Media LLC},
  author = {Eleftherakis,  George K. and Kakariadis,  Evgenios T. A. and Todorov,  Ivan G.},
  year = {2026},
  month = July 
}

@BOOK{Edmonds1996-xa,
  title     = "Angular momentum in quantum mechanics",
  author    = "Edmonds, A R",
  publisher = "Princeton University Press",
  series    = "Princeton Landmarks in Mathematics and Physics",
  month     =  jan,
  year      =  1996,
  address   = "Princeton, NJ",
  language  = "en"
}

@article{DAndrea2014,
  title = {Spectral geometry with a cut-off: Topological and metric aspects},
  volume = {82},
  ISSN = {0393-0440},
  url = {http://dx.doi.org/10.1016/j.geomphys.2014.03.014},
  DOI = {10.1016/j.geomphys.2014.03.014},
  journal = {Journal of Geometry and Physics},
  publisher = {Elsevier BV},
  author = {D’Andrea,  Francesco and Lizzi,  Fedele and Martinetti,  Pierre},
  year = {2014},
  month = Aug,
  pages = {18–45}
}

@article{vanSuijlekom2021,
  title = {Gromov–Hausdorff convergence of state spaces for spectral truncations},
  volume = {162},
  ISSN = {0393-0440},
  url = {http://dx.doi.org/10.1016/j.geomphys.2020.104075},
  DOI = {10.1016/j.geomphys.2020.104075},
  journal = {Journal of Geometry and Physics},
  publisher = {Elsevier BV},
  author = {van Suijlekom,  Walter D.},
  year = {2021},
  month = Apr,
  pages = {104075}
}

@article{Rieffel2023,
  title = {Convergence of Fourier truncations for compact quantum groups and finitely generated groups},
  volume = {192},
  ISSN = {0393-0440},
  url = {http://dx.doi.org/10.1016/j.geomphys.2023.104921},
  DOI = {10.1016/j.geomphys.2023.104921},
  journal = {Journal of Geometry and Physics},
  publisher = {Elsevier BV},
  author = {Rieffel,  Marc A.},
  year = {2023},
  month = Oct,
  pages = {104921}
}

@article{Li2026,
  title = {The odd spectral localiser via asymptotic morphisms and quasi-projections},
  volume = {11},
  ISSN = {2379-1683},
  url = {http://dx.doi.org/10.2140/akt.2026.11.213},
  DOI = {10.2140/akt.2026.11.213},
  number = {2},
  journal = {Annals of K-Theory},
  publisher = {Mathematical Sciences Publishers},
  author = {Li,  Yuezhao and Mesland,  Bram},
  year = {2026},
  month = Mar,
  pages = {213–260}
}

@article{Connes2022,
  title = {Tolerance relations and operator systems},
  volume = {88},
  ISSN = {2064-8316},
  url = {http://dx.doi.org/10.1007/s44146-022-00012-3},
  DOI = {10.1007/s44146-022-00012-3},
  number = {1-2},
  journal = {Acta Scientiarum Mathematicarum},
  publisher = {Springer Science and Business Media LLC},
  author = {Connes,  Alain and van Suijlekom,  Walter D.},
  year = {2022},
  month = Aug,
  pages = {101–129}
}

@misc{VanSuijlekom2024b,
  doi = {10.48550/ARXIV.2409.02773},
  url = {https://arxiv.org/abs/2409.02773},
  howpublished = {https://arxiv.org/abs/2409.02773},
  author = {van Suijlekom,  Walter D.},
  title = {A generalization of K-theory to operator systems},
  publisher = {arXiv},
  year = {2024},
  copyright = {Creative Commons Attribution 4.0 International}
}

@article{Farenick2021,
  title = {The operator system of Toeplitz matrices},
  volume = {8},
  ISSN = {2330-0000},
  url = {http://dx.doi.org/10.1090/btran/83},
  DOI = {10.1090/btran/83},
  number = {32},
  journal = {Transactions of the American Mathematical Society,  Series B},
  publisher = {American Mathematical Society (AMS)},
  author = {Farenick,  Douglas},
  year = {2021},
  month = Nov,
  pages = {999–1023}
}

@misc{CerjanSB2026,
  doi = {10.48550/ARXIV.2506.14174},
  url = {https://arxiv.org/abs/2506.14174},
  howpublished = {https://arxiv.org/abs/2506.14174},
  author = {Cerjan,  Alexander and Schulz-Baldes,  Hermann},
  title = {Detecting local topology with the spectral localizer},
  publisher = {arXiv},
  year = {2025},
  copyright = {arXiv.org perpetual,  non-exclusive license}
}

@book{Efthimiou2014,
  title = {Spherical Harmonics in p Dimensions},
  ISBN = {9789814596701},
  url = {http://dx.doi.org/10.1142/9134},
  DOI = {10.1142/9134},
  publisher = {WORLD SCIENTIFIC},
  author = {Efthimiou,  Costas and Frye,  Christopher},
  year = {2014},
  month = Feb 
}

@article{CHARRON2025110792,
title = {On the concentration of the Fourier coefficients for products of Laplace-Beltrami eigenfunctions on real-analytic manifolds},
journal = {Journal of Functional Analysis},
volume = {288},
number = {5},
pages = {110792},
year = {2025},
issn = {0022-1236},
doi = {https://doi.org/10.1016/j.jfa.2024.110792},
url = {https://www.sciencedirect.com/science/article/pii/S0022123624004804},
author = {Philippe Charron and François Pagano},
}

@article{Morrey1957,
  title = {On the analyticity of the solutions of linear elliptic systems of partial differential equations},
  volume = {10},
  ISSN = {1097-0312},
  url = {http://dx.doi.org/10.1002/cpa.3160100204},
  DOI = {10.1002/cpa.3160100204},
  number = {2},
  journal = {Communications on Pure and Applied Mathematics},
  publisher = {Wiley},
  author = {Morrey,  C. B. and Nirenberg,  L.},
  year = {1957},
  month = Jan,
  pages = {271–290}
}

@book{Lewis2023,
  title = {Geometric Analysis on Real Analytic Manifolds},
  ISBN = {9783031379130},
  ISSN = {1617-9692},
  url = {http://dx.doi.org/10.1007/978-3-031-37913-0},
  DOI = {10.1007/978-3-031-37913-0},
  journal = {Lecture Notes in Mathematics},
  publisher = {Springer International Publishing},
  author = {Lewis,  Andrew D.},
  year = {2023}
}

@misc{Bhatt2026,
  doi = {10.48550/ARXIV.2610.01132},
  url = {https://arxiv.org/abs/2610.01132},
  howpublished = {https://arxiv.org/abs/2610.01132},
  author = {Bhatt,  Vibhor and Guin,  Satyajit and Sharma,  Palash},
  title = {Gromov-Hausdorff Convergence of Spectral Truncations for Noncommutative Tori},
  publisher = {arXiv},
  year = {2026},
  copyright = {arXiv.org perpetual,  non-exclusive license}
}

\end{document}